\newtheorem{theorem}{Théorème}[section]
\newtheorem*{theorem*}{Théorème}
\newtheorem{lemma}[theorem]{Lemme}
\newtheorem*{lemma*}{Lemme}
\newtheorem{proposition}[theorem]{Proposition}
\newtheorem{corollary}[theorem]{Corollaire}
\theoremstyle{definition}
\newtheorem{definition}[theorem]{Définition}
\newtheorem{remark}[theorem]{Remarque}
\newtheorem{example}[theorem]{Exemple}
\renewenvironment{proof}{
\par\noindent{\it Démonstration.}} {\mbox{}\hfill$\square$ \par }
\newcommand{\into}{\hookrightarrow}
\newcommand{\bbP}{\mathbb{P}}
\newcommand{\bbA}{\mathbb{A}}
\newcommand{\bbG}{\mathbb{G}}
\newcommand{\bbC}{\mathbb{C}}
\newcommand{\bbZ}{\mathbb{Z}}
\newcommand{\bbQ}{\mathbb{Q}}
\newcommand{\bbR}{\mathbb{R}}
\newcommand {\Gal}{\mathord{\rm{Gal}}}
\newcommand{\Mat}{\mathord{\rm{Mat}}}
\newcommand{\Gr}{\mathord{\rm{Gr}}}
\newcommand{\Id}{\mathord{\rm{Id}}}
\newcommand{\Br}{\mathord{\rm{Br}}}
\newcommand{\Hom}{\mathord{\rm{Hom}}}
\newcommand{\Sym}{\mathord{\rm{Sym}}}
\newcommand{\Frac}{\mathord{\rm{Frac}}}
\newcommand{\res}{\mathord{\rm{res}}}
\newcommand{\lisse}{\mathord{\rm{lisse}}}
\newcommand{\inv}{\mathord{\rm{inv}}}
\newcommand{\car}{\mathord{\rm{car}}}
\newcommand{\sgn}{\mathord{\rm{sgn}}}
\newcommand{\disc}{\mathord{\rm{disc}}}
\newcommand{\cliff}{\mathord{\rm{cliff}}}
\newcommand{\sing}{\mathord{\rm{sing}}}
\newcommand{\rk}{\mathord{\rm{rk}}}
\newcommand{\overbar}[1]{\mkern 1.5mu\overline{\mkern-1.5mu#1\mkern-1.5mu}\mkern 1.5mu}
\DeclareFontFamily{U}{wncy}{}
    \DeclareFontShape{U}{wncy}{m}{n}{<->wncyr10}{}
    \DeclareSymbolFont{mcy}{U}{wncy}{m}{n}
    \DeclareMathSymbol{\Sh}{\mathord}{mcy}{"58}
\title{\Large{\scshape{\textbf{le principe de hasse pour les intersections\\de deux quadriques dans $\mathbb{P}^7$}}}}
\date{}
\author{Alexander Molyakov}
\begin{document}

\maketitle
\begin{abstract}
On démontre le principe de Hasse pour un modèle propre lisse d'une intersection géométriquement intègre non conique de deux quadriques dans l'espace projectif de~dimension 7 sur un corps de nombres. Ce résultat généralise le résultat de Heath-Brown \cite{HB} qui a établi l'énoncé dans le cas lisse. Notre argument est basé sur les idées de Colliot-Thélène \cite{CT} et la méthode des fibrations pour les zéro-cycles dans la forme de Harpaz-Wittenberg \cite{HW}.\\

We prove the Hasse principle for a smooth proper model of a geometrically integral non-conical intersection of two quadrics in the projective space of dimension~7 over a number field. This result generalizes the result of Heath-Brown \cite{HB} who established the statement in the smooth case. Our argument is built upon the ideas of \linebreak Colliot-Thélène \cite{CT} and the fibration method for zero-cycles in the form of Harpaz-Wittenberg~\cite{HW}.
\end{abstract}

\section*{Introduction}

L'étude des principes locaux-globaux pour les points rationnels sur une variété algébrique est un problème classique de la géométrie arithmétique. Une variété lisse sur un corps de nombres $k$ satisfait le \textit{principe de Hasse} si l'existence locale d'un point rationnel sur chaque complété $k_v$ implique l'existence d'un $k$-point rationnel. Dans le cas des variétés singulières il est raisonnable de modifier cette définition en exigeant que tout modèle propre lisse de la variété satisfasse le principe de Hasse. Cette modification s'appelle le \textit{principe de Hasse lisse}. En même temps, il existe beaucoup d'exemples de variétés qui ne satisfont pas le principe de Hasse. \textit{L'obstruction de Brauer-Manin} est une obstruction au~principe de Hasse qui a été introduite par Y. Manin \cite{Mn} en termes du groupe de Brauer étale de la variété.  Une conjecture générale formulée par J.-L. Colliot-Thélène \cite{CT03} affirme que l'obstruction de Brauer-Manin au principe de Hasse est la seule pour les variétés rationnellement connexes. Cette conjecture reste largement ouverte même dans le~cas des variétés géométriquement rationnelles. 

Les intersections géométriquement intègres et non coniques de deux quadriques dans l'espace projectif $\bbP^n\;(n\geqslant 4)$ fournissent l'un des premiers exemples non triviaux de~variétés géométriquement rationnelles. D'après la conjecture de Colliot-Thélène une telle variété $X\subset \bbP^n$ doit satisfaire le principe de Hasse lisse à partir de la dimension $n=6$ puisque le groupe de Brauer relatif $\Br(\hat X)/\Br(k)$ d'un modèle propre lisse $\hat X$ de $X$ s'annule selon le calcul fait dans \cite{CSS}. Pour les mod\`eles propres et lisses de ces vari\'et\'es on sait aussi que  le principe
de Hasse implique l'approximation faible quand $n \geqslant 6$ \cite[Theorem 3.11]{CSS}. Dans l'article \cite{CSS} Colliot-Thélène, Sansuc et Swinnerton-Dyer ont montré le principe de Hasse lisse sans restrictions dans la dimension  $n\geqslant 8$ et aussi pour certains types d'intersections spéciales dans la dimension $n\geqslant 6$, en particulier, celles qui contiennent une paire de droites conjuguées ou une paire de points singuliers conjugués. Trente ans plus tard, D.R. Heath-Brown a établi le principe de Hasse pour toute intersection \textit{lisse} dans la dimension $n=7$ \cite{HB}. On généralise ce résultat en montrant que le principe de Hasse lisse vaut pour une intersection quelconque (possiblement singulière) de deux quadriques dans $\bbP^7$. Plus précisément, on prouve le~théorème suivant indiqué comme le Théorème \ref{theoremeprincipal} ci-dessous.
\begin{theorem*}
Soit $k$ un corps de nombres. Soit $X\subset \bbP^7_k$ une intersection complète géométriquement intègre non conique de deux quadriques. Alors tout
modèle propre lisse de $X$ satisfait le principe de Hasse.     \end{theorem*}

Notamment, l'argument donné dans \cite{HB} pour $X$ lisse utilise un énoncé local sur l'existence d'une conique contenue dans $X$ sur $k_v$ quand $X(k_v)\neq \varnothing$. J.-L. Colliot-Thélène a~récemment revisité le résultat de Heath-Brown et a trouvé une démonstration relativement courte du principe de Hasse dans le cas lisse. Sa démonstration utilise un théorème de Salberger \cite{Sb} et s'appuie également sur l'existence locale d'une conique qui est déduite de \cite[Proposition 1.6]{CT} formant donc la base de l'argument. Cette dernière proposition affirme l'existence d'un point quadratique sur une intersection de deux quadriques dans $\bbP^3$ sur un corps $p$-adique dont le pinceau contient une forme dégénérée définie sur le corps de base. Notre approche suit celle de Colliot-Thélène pour traiter le cas où $X$ est \textit{régulière} dans le sens de \cite{Wn}. Cependant, l'énoncé local correspondant (la Proposition \ref{prop8} ci-dessous) exige du travail supplémentaire afin de trouver des sections de $X$ suffisamment génériques. Pour compléter la démonstration il faut montrer que $X_{k_v}$ contient un plan pour presque toute place $v$ de $k$. On peut appliquer la description explicite de la variété $F$ paramétrant les plans contenus dans le lieu lisse de $X$ selon Wang \cite{Wn}. Dans l'appendice on présente une autre façon d'obtenir ce résultat (Proposition \ref{propplan}).  

L'autre cas qu'on appelle \textit{irrégulier} est assez différent. Tandis que dans bien des cas on peut toujours trouver localement une conique par la Proposition \ref{prop8}, $X_{k_v}$ peut ne pas contenir un plan pour un nombre infini de places $v$ (Exemple \ref{exemplenonreg}). C'est la raison pour laquelle on a besoin d'une autre méthode. On~divise le~cas irrégulier en trois possibilités en fonction du rang des formes quadratiques dégénérées dans le pinceau géométrique de $X$, voir le début de la section 3. Quand le~pinceau géométrique contient une forme de rang $\leqslant 5$ on réduit le problème au résultat de \cite{CSS} sur les intersections contenant une paire de droites conjuguées (Proposition \ref{cas5}). L'idée de la démonstration dans les cas qui restent est basée sur la méthode des fibrations pour les zéro-cycles dans la forme de \cite{HW} vu l'équivalence entre l'existence d'un zéro-cycle de degré 1 et d'un point rationnel sur une intersection de deux quadriques qui résulte du théorème de Amer-Brumer (Théorème \ref{prop16}).  On déduit de cette équivalence l'équivalence entre plusieurs principes locaux-globaux pour les intersections de deux quadriques (Proposition \ref{core}) y inclus la conjecture \hyperref[(E)]{(E)}. Ensuite, on construit une fibration de $X$ en intersections singulières de deux quadriques dans $\bbP^4$ ou en espaces homogènes principaux de tores algébriques pour lesquels il est connu que l'obstruction de Brauer-Manin à l'approximation faible est la seule \cite{CorTs, CSS, Sn} et, donc, qu'ils satisfont la conjecture \hyperref[(E)]{(E)} d'après le résultat général de Liang \cite{L}. Cela nous permet de conclure que $X$ satisfait la conjecture \hyperref[(E)]{(E)} qui assure le principe de Hasse lisse étant donné l'équivalence mentionnée ci-dessus.

Dans l'introduction de \cite{HB} l'auteur écrit: "\textit{While Theorem 2 can be appropriately extended to singular intersections, it is unclear 
whether one can prove a corresponding global statement. Thus the methods of the present paper 
seem insufficient to handle a version of Theorem 1 for singular intersections of two quadrics in $\bbP^7$}." De façon surprenante, le théorème local n'est pas étendu en général aux intersections
singulières quelconques. Par ailleurs, une combinaison de méthodes diverses permet de traiter le cas global.

\vspace{0.2cm}
\noindent\textbf{Remerciements.} Je remercie mon directeur de stage, Jean-Louis Colliot-Thélène, qui~m'a~proposé ce problème, pour son enthousiasme, ses conseils et suggestions extrêmement précieux dont la suggestion d'appliquer la méthode des fibrations pour les zéro-cycles dans le~cas irrégulier.
\medskip

\subsection*{Les conventions}

\begin{itemize}
    \item $k$ est toujours un corps parfait, on suppose $\car(k)\neq 2$;
    \item $\bar k$ est une clôture algébrique de $k$;
    \item pour $k$ un corps de nombres on note $k_v$ la \textit{complété} de $k$ à la place $v$;
    \item l'expression "\textit{pour presque toute place}" toujours signifie "pour toute place en dehors d'un ensemble fini"
    \item $\bigoplus_v$, $\prod_v$ signifient la somme (resp. le produit) pour chaque place $v$ de $k$;
    \item pour un schéma $X$ on note $\Br(X)=H^2_{\text{ét}}(X,\bbG_m)$ le \textit{groupe de Brauer étale};
    \item une \textit{variété} est un schéma séparé de type fini sur un corps;
    \item pour une variété $X$ sur un corps $k$, étant donné une extension $K/k$, on note $X(K)$ l'ensemble des points de $X$ à~valeurs dans $K$ et $X_K=X\otimes_{k} K$ 
    l'extension des constantes;
    \item un corps $k$ est dite \textit{fertile} si pour chaque variété lisse géométriquement intègre l'hypothèse $X(k)\neq \varnothing$ implique que $X(k)$ est Zariski-dense dans $X$; tout corps $p$-adique est fertile ainsi que le corps des réels;
    \item une \textit{quadrique} $Q\subset \bbP^n$ est une variété définie par l'annulation d'un polynôme homogène de degré 2.

\end{itemize}
\section{Préliminaires} 
\subsection*{Les principes locaux-globaux}
Dans toutes les définitions et conjectures qui suivent $X$ est une variété propre lisse géométriquement intègre sur un corps de nombres $k$. On note $X(\mathbf{A}_k)=\prod_v X(k_v)$ l'ensemble des \textit{points adéliques} de $X$. Comme la variété $X$ est propre, la topologie adélique sur $X(\mathbf{A}_k)$ coïncide avec la topologie de produit. 

\begin{definition}
La variété $X$ satisfait \textit{le principe de Hasse} si 
$X(\mathbf{A}_k)\neq \varnothing$ implique $X(k) \neq \varnothing$. Si, de plus, $X(k)$ est dense dans $X(\mathbf{A}_k)$ par rapport à la topologie de produit, on dit que $X$ \textit{satisfait l'approximation faible}.
Une variété géométriquement intègre $Y$ (possiblement singulière) satisfait le  \textit{principe de Hasse lisse} si tout modèle propre lisse de $Y$ satisfait le principe de Hasse. 
\end{definition}

 Étant donné que la propriété d'avoir un point rationnel est un invariant birationnel pour les variétés propres lisses \cite[Corollary 3.6.16]{PY}, si l'un des modèles propres lisses de $Y$ satisfait le principe de Hasse, alors tous les autres le satisfont.

Le principe de Hasse ne vaut pas en général pour toute variété propre lisse. L'obstruction de Brauer-Manin mesure le défaut du principe de Hasse. On définit un accouplement 
\begin{equation}\label{coupl}
\langle\cdot \,,\cdot \rangle\colon X(\mathbf{A}_k)\times \Br(X)\to \bbQ/\bbZ\end{equation}
comme 
$$\langle x, \alpha \rangle = \sum_v \inv_v\big(\alpha|_{x_v}\big),$$
où $\alpha|_{x_v}\in \Br(k_v)$ est la restriction de $\alpha$ à $x_v\in X(k_v)$ et $\inv_v\colon \Br(k_v)\to \bbQ/\bbZ$ est l'invariant local. On note $X(\mathbf{A}_k)^{\Br}\subseteq X(\mathbf{A}_k)$ le noyau de cet accouplement. C'est un fermé de $X(\mathbf{A}_k)$.

La théorie des corps de classes nous fournit une suite exacte \cite[chap. VII \S9-10]{CF}
\begin{equation}\label{Brsuite}
\xymatrix@=1.3cm{
 0  \ar[r] & \Br(k) \ar[r] & \displaystyle{\bigoplus_v \Br(k_v)} \ar^{\quad\sum \inv_v}[r] & \bbQ/\bbZ \ar[r] & 0. }
 \end{equation}
 Par conséquent, on a une inclusion $X(k)\subseteq X(\mathbf{A_k})^{\Br}$. 
 
 Les deux conjectures suivantes furent formulées par Colliot-Thélène \cite{CT03} dans l'hypothèse que la variété $X$ est rationnellement connexe.

\vspace{2mm}
\noindent\textbf{Conjecture (Br-PH).} \label{(Br-PH)} L'obstruction de Brauer-Manin au principe de Hasse est la seule pour $X$ rationnellement connexe, autrement dit $X(\mathbf{A_k})^{\Br}\neq \varnothing$ implique $X(k)\neq \varnothing$.

\vspace{2mm}
\noindent\textbf{Conjecture (Br-AF).} \label{(Br-AF)} L'obstruction de Brauer-Manin à l'approximation faible est la seule pour $X$ rationnellement connexe, c'est à dire $X(k)$ est dense dans  $X(\mathbf{A_k})^{\Br}$ par rapport à la topologie de produit.\vspace{2mm}
 

Donnons maintenant des rappels sur la situation pour les zéro-cycles.
Les définitions et conjectures suivantes  remontent à l'article \cite{CTSn} et aux généralisations envisagées dans \cite[\S 7]{KS} et \cite{CT95}. Elles firent l'objet d'une série de travaux, en particulier du travail de Salberger
\cite{S88}.

Soit $Z_0^1(X)$ l'ensemble des zéro-cycles de degré 1 sur $X$ et soit $Z_0^1(X)_{\mathbf{A}_k}=\prod_vZ_0^1(X_{k_v})$ l'ensemble des zéro-cycles adéliques de degré 1. L'accouplement \eqref{coupl} induit un accouplement 
\begin{equation}\label{couplcycle}
\langle\cdot \,,\cdot \rangle\colon Z_0^1(X)_{\mathbf{A}_k}\times \Br(X)\to \bbQ/\bbZ. 
\end{equation}
On note $Z_0^1(X)_{\mathbf{A}_k}^{\Br}\subseteq Z_0^1(X)_{\mathbf{A}_k}$ le noyau de \eqref{couplcycle}. De manière analogue, la suite \eqref{Brsuite} assure l'inclusion $Z_0^1(X)\subseteq Z_0^1(X)_{\mathbf{A}_k}^{\Br}$. 


\vspace{2mm}
\noindent\textbf{Conjecture (Br-ZC).} \label{(Br-ZC)} {L'obstruction de Brauer-Manin à l'existence d'un zéro-cycle de degré 1 est la seule pour toute variété $X$ propre lisse et géométriquement intègre, autrement dit $Z_0^1(X)_{\mathbf{A}_k}^{\Br} \neq \varnothing$ implique $Z_0^1(X)\neq \varnothing$. \vspace{2mm}

La conjecture \hyperref[(E)]{(E)} ci-dessous, exprim\'ee sous cette forme dans \cite[(1.4)]{Wn}, 
regroupe,  avec une pr\'ecision de van Hamel, les \'enonc\'es
\cite[Conjectures 1.5]{CT95} portant l'un sur l'existence de z\'ero-cycles de degr\'e 1,
l'autre sur le groupe de Chow des z\'ero-cycles de degr\'e z\'ero. En particulier, cette conjecture généralise \hyperref[(Br-ZC)]{(Br-ZC)}.

On note par $CH_0(X)$ le groupe de Chow des zéro-cycles sur~$X$ modulo l'équivalence rationnelle. Pour une place $v$ de $k$ on définit $$CH_0(X_{k_v})'=
\begin{cases}
CH_0(X_{k_v}) & \text{si $v$ est finie;}\\
CH_0(X_{k_v})/N_{\bar k_v/ k_v}\big(CH_0(X_{\bar k_v})\big) &  \text{si $v$ est archimédienne.}
\end{cases}
$$  
Pour un groupe abélien $M$, on dénote par $\displaystyle{\widehat{M}=\mathop{\lim_{\longleftarrow}}_{n} M/nM}$ la complétion profinie~de~$M$. 
Vu la suite \eqref{Brsuite}, l'accouplement \eqref{coupl} induit un complexe de groupes 
\begin{equation}\label{complexE}
\xymatrix{
 \widehat{CH_0}(X)  \ar[r] & \displaystyle{\prod\limits_v \widehat{CH_0}(X_{k_v})'}  \ar[r] & \Hom(\Br(X),\bbQ/\bbZ). }
 \end{equation}

\noindent\textbf{Conjecture (E).}\label{(E)} 
Le complexe \eqref{complexE} est exact pour toute variété $X$ propre lisse et géométriquement intègre.


\vspace{2mm}
Dans la section 3 on utilisera le théorème de fibration pour les zéro-cycles suivant  \cite[Corollary 8.4]{HW}: 
\begin{theorem}[Harpaz-Wittenberg]\label{prop15}
Soit $k$ un corps de nombres. Soit $X$ une variété propre lisse géométriquement intègre sur $k$ et soit $f\colon X\to\bbP^n_k$ un morphisme dominant. Supposons que la fibre générique de $f$ est rationnellement connexe et que pour tout point fermé $p\in U$ dans un ouvert $U\subset \bbP^n_k$ la fibre $f^{-1}(p)$ satisfait \hyperref[(E)]{$\mathrm{(E)}$} sur le corps $k(p)$. Alors la conjecture \hyperref[(E)]{$\mathrm{(E)}$} vaut pour $X$.
\end{theorem}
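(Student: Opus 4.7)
Le plan est d'adapter au cadre des zéro-cycles la méthode classique des fibrations pour les points rationnels. Partant d'une famille adélique $(z_v)_v\in \prod_v \widehat{CH_0}(X_{k_v})'$ orthogonale à $\Br(X)$, on cherche à la relever en une classe globale de $\widehat{CH_0}(X)$. Vu la définition de $\widehat{CH_0}$ comme complétion profinie, il suffit de traiter, pour chaque entier $N\geqslant 1$, l'énoncé analogue modulo $N$, dont tous les termes sont finis. Par récurrence sur $n$, en restreignant $f$ aux préimages des droites d'un pinceau général dans $\bbP^n_k$, on se ramène au cas $n=1$.

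Dans le cas $f\colon X\to \bbP^1_k$, l'idée est de trouver un point fermé $p\in U$ tel que la restriction de $(z_v)$ à la fibre $X_p=f^{-1}(p)$ fournisse une famille adélique orthogonale à $\Br(X_p)$. L'hypothèse que $X_p$ satisfait \hyperref[(E)]{(E)} sur $k(p)$ fournit alors une classe globale sur $X_p$, qu'on pousse sur $X$ pour obtenir la classe cherchée. Pour construire un tel $p$, on utilise d'abord un lemme de déplacement à la Salberger afin de représenter chaque $z_v$ par un zéro-cycle effectif transverse à $f$ ; puis une approximation faible sur $\bbP^1_k$ combinée à un théorème d'ensemble hilbertien généralisé permet de fabriquer le point fermé global satisfaisant aux contraintes locales.

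Le point le plus délicat sera le contrôle du passage de l'orthogonalité à $\Br(X)$ à celle à $\Br(X_p)$. Pour chaque $\alpha\in\Br(X_p)$, il s'agit de la relier, modulo une classe verticale, à un élément global de $\Br(X)$. Ceci repose sur une étude précise du sous-groupe vertical $\Br_{\mathrm{vert}}(X/\bbP^1_k)\subseteq\Br(X)$ et de la cohomologie non ramifiée des fibres. La connexité rationnelle des fibres joue un rôle crucial : elle garantit que la cohomologie non ramifiée des fibres lisses sur les corps locaux est contrôlée, et permet d'utiliser les résultats classiques sur l'existence de sections pour les familles rationnellement connexes (Graber-Harris-Starr). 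Finalement, le théorème de Chebotarev permet d'assembler les conditions locales en une condition globale compatible à l'intérieur d'un ensemble hilbertien adéquat, terminant la construction.
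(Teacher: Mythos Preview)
Le papier ne donne aucune démonstration de cet énoncé : le Théorème~\ref{prop15} y est simplement cité comme \cite[Corollary 8.4]{HW}, sans aucun argument. Il n'y a donc pas de « preuve du papier » à laquelle comparer votre proposition.

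Quant à votre esquisse, elle décrit honnêtement la stratégie générale de Harpaz--Wittenberg (réduction à $n=1$, lemme de déplacement à la Salberger, construction d'un point fermé global via approximation faible et un argument de type Hilbert, contrôle du groupe de Brauer vertical, usage de Graber--Harris--Starr), mais elle reste un plan de lecture plutôt qu'une démonstration. Plusieurs points sont seulement nommés sans être réalisés : la réduction de $\bbP^n$ à $\bbP^1$ demande de vérifier que les hypothèses (fibre générique rationnellement connexe, validité de (E) pour les fibres au-dessus d'un ouvert) se transmettent aux restrictions au-dessus d'un pinceau général ; le passage de l'orthogonalité à $\Br(X)$ à l'orthogonalité à $\Br(X_p)$ est le c\oe ur technique de \cite{HW} et repose sur un argument fin de type « lemme formel de Harari » combiné à une analyse des résidus aux fibres singulières, que vous évoquez sans le mener ; enfin la construction du point fermé $p$ dans \cite{HW} passe par des zéro-cycles effectifs de grand degré et une version raffinée du théorème d'irréductibilité, pas simplement par l'approximation faible usuelle. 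Si l'objectif était de reproduire la preuve, il faudrait développer chacun de ces points ; si l'objectif était de citer le résultat, une simple référence à \cite[Corollary 8.4]{HW} suffit, comme le fait le papier.
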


 Un théorème de Liang assure la conjecture \hyperref[(E)]{(E)} pour une variété rationnellement connexe si l'obstruction de Brauer-Manin à l'approximation faible est la seule pour toute extension finie \cite[Theorems A,B]{L}.
 
 \begin{theorem}[Liang]\label{TheoremLiang}
     Soit $k$ un corps de nombres et soit $X$ une variété propre lisse géométriquement intègre et rationnellement connexe sur $k$. Supposons que $X_K$ satisfait \linebreak\hyperref[(Br-AF)]{$\mathrm{(Br\text{-}AF)}$} pour chaque extension finie $K/k$. Alors $X_K$ satisfait la conjecture \hyperref[(E)]{$\mathrm{(E)}$} pour chaque extension finie $K/k$.
 \end{theorem}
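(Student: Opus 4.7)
Le plan consiste à suivre l'argument de Liang dans \cite{L}, qui transforme les énoncés sur les zéro-cycles en énoncés sur les points rationnels sur des extensions finies variables, en utilisant la connexité rationnelle et l'irréductibilité de Hilbert comme moteur géométrique. Il suffit de démontrer la conclusion pour $K=k$, l'hypothèse étant déjà uniforme en $K$.

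Dans un premier temps, j'établirais la partie existence de \hyperref[(E)]{$\mathrm{(E)}$}, à savoir \hyperref[(Br-ZC)]{$\mathrm{(Br\text{-}ZC)}$}: étant donné $(z_v)_v \in Z_0^1(X)_{\mathbf{A}_k}^{\Br}$, on cherche à produire un zéro-cycle de degré $1$ sur $X$. Fixons un sous-groupe fini $B \subset \Br(X)$ témoin de la condition d'orthogonalité, et un ensemble fini $S$ de places de $k$ contenant les places archimédiennes, celles de mauvaise réduction, et celles où les éléments de $B$ sont ramifiés. Le théorème de Graber-Harris-Starr combiné avec l'existence de courbes rationnelles très libres sur les variétés rationnellement connexes fournit des courbes rationnelles sur $X$ passant par des sous-ensembles finis prescrits, et un argument de mobilité à la Bertini permet de se ramener à des cycles adéliques supportés sur une famille de telles courbes. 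Un argument d'irréductibilité de Hilbert produit alors un point fermé $P \in X$ dont le corps résiduel $L/k$ a un comportement de décomposition prescrit aux places de $S$ s'accordant avec les données locales de $(z_v)_v$.

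Le tiré en arrière de $(z_v)_v$ à $X_L$ donne un zéro-cycle adélique sur $X_L$ encore orthogonal à $\Br(X_L)$, par fonctorialité des invariants locaux sous restriction. L'hypothèse \hyperref[(Br-AF)]{$\mathrm{(Br\text{-}AF)}$} pour $X_L$ fournit donc un point $L$-rationnel de $X$ approchant cette donnée, dont l'image dans $Z_0^1(X)$ est un zéro-cycle de degré $[L:k]$ arbitrairement proche de $(z_v)_v$ aux places de $S$. En effectuant cette construction deux fois avec des extensions $L_1, L_2$ de degrés $d_1, d_2$ premiers entre eux (flexibilité permise par l'irréductibilité de Hilbert en variant la courbe rationnelle de départ) et en prenant une combinaison de Bézout des résultats, on obtient un zéro-cycle de degré~$1$ sur~$X$ encore proche de $(z_v)_v$ aux places de $S$, ce qui démontre \hyperref[(Br-ZC)]{$\mathrm{(Br\text{-}ZC)}$}.

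Pour obtenir la conjecture \hyperref[(E)]{$\mathrm{(E)}$} complète, il faut renforcer ce schéma en une approximation dans $\widehat{CH_0}$: on exécute le même argument en exigeant que les points $L_i$-rationnels fournis par \hyperref[(Br-AF)]{$\mathrm{(Br\text{-}AF)}$} coïncident avec $(z_v)_v$ modulo un entier arbitrairement divisible à chaque place de $S$, puis on passe à la complétion profinie en utilisant l'exactitude locale de~\eqref{complexE} combinée avec la liberté d'agrandir $S$. Le principal obstacle est d'orchestrer l'irréductibilité de Hilbert avec la condition de Brauer-Manin: le corps $L$ doit être choisi de sorte que ses complétions locales réalisent les décompositions voulues aux places de $S$, tout en garantissant que le cycle adélique tiré en arrière sur $X_L$ reste orthogonal au groupe de Brauer et proche de $(z_v)_v$. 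La connexité rationnelle de $X$, par l'abondance de courbes rationnelles sur lesquelles tester l'irréductibilité de Hilbert, fournit juste assez de flexibilité géométrique pour rendre toutes ces contraintes compatibles simultanément.
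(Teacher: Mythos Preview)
Le papier ne démontre pas ce théorème: il se contente de le citer comme \cite[Theorems A,B]{L}, sans aucun argument. Il n'y a donc rien à comparer du côté du papier. Ta proposition va au-delà de ce que fait l'article en esquissant effectivement les idées de Liang.

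Ton esquisse capture correctement l'architecture de l'argument original de Liang --- réduction aux points rationnels sur des extensions finies via l'irréductibilité de Hilbert, usage de la connexité rationnelle pour la mobilité des cycles, combinaison de degrés premiers entre eux --- mais certaines formulations sont imprécises. Par exemple, parler du \og tiré en arrière de $(z_v)_v$ à $X_L$\fg{} n'a pas de sens littéral: ce que fait Liang, c'est construire un point fermé $P$ de corps résiduel $L$ dont les complétés locaux réalisent des approximations des $z_v$, puis considérer le point adélique de $X_L$ obtenu en regardant $P$ au-dessus de chaque place de $L$. L'orthogonalité au groupe de Brauer de $X_L$ se vérifie alors par un calcul explicite de corestriction, et non par une simple \og fonctorialité\fg. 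De même, le passage de \hyperref[(Br-ZC)]{$\mathrm{(Br\text{-}ZC)}$} à la conjecture \hyperref[(E)]{$\mathrm{(E)}$} complète demande un contrôle fin de l'équivalence rationnelle modulo $n$ aux places de $S$, que ton dernier paragraphe évoque mais ne détaille pas. Ces points ne sont pas des erreurs de stratégie, mais des endroits où une preuve complète exigerait nettement plus de précision que ce que tu donnes.
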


\subsection*{Les intersections de deux quadriques}

Soit $k$ un corps parfait, $\car(k)\neq 2$.

\begin{definition}
Une \textit{intersection complète de deux quadriques} $X\subset \bbP^n_k$ est un sous-schéma fermé défini par des équations $F=G=0$ où $F,G$ sont des polynômes homogènes de degré 2 sans facteur commun. On appelle \textit{pinceau de} $X$ le pinceau des quadriques défini par les formes quadratiques $\lambda F +\mu G$ où $\lambda,\mu\in k$ ne s'annulent pas simultanément. Le pinceau $\lambda F +\mu G$ avec $\lambda,\mu \in \bar k$ s'appelle le \textit{pinceau géométrique de}~$X$. On dit qu'un pinceau de quadriques est \textit{non dégénéré} s'il contient au moins une quadrique lisse. 
\end{definition}
On note que, si le pinceau de $X$ est non dégénéré, il contient un nombre fini de quadriques singulières. Par conséquent, quand le corps $k$ est infini, le pinceau de $X$ sur $k$ est non dégénéré si et seulement si le pinceau géométrique de $X$ l'est. 

Dans le lemme qui suit on rappelle les résultats standard sur les intersections de deux quadriques formulés dans \cite{CSS}.  
\begin{lemma}\label{lemma0}
Soit $X\subset \bbP^n_k\;(n\geqslant 3)$ une intersection complète géométriquement intègre de deux quadriques $F=0,\,G=0$ et soit $\chi(\lambda,\mu)=\det(\lambda F +\mu G)$ le polynôme caractéristique. Alors
\begin{itemize}
    \item[(i)] Si le polynôme $\chi(\lambda,\mu)$ est identiquement nul, autrement dit, si le pinceau de $X$ est dégénéré, alors $X$ possède un $k$-point singulier. 
    \item[(ii)] Si $X$ possède un $k$-point singulier non conique, $X$ est $k$-birationnelle à une quadrique géométriquement intègre dans $\bbP^{n-1}_k$.
    \item[(iii)] Si pour un point $(\lambda, \mu) \in \bbP^1(\bar k)$ on a $\rk(\lambda F +\mu G) = r$, alors $(\lambda,\mu)$ est une racine du polynôme $P$ d'ordre $\geqslant n+1-r$.
    \item[(iv)] Si le pinceau de $X$ est non dégénéré, $X$ n'est pas conique.
\end{itemize}

De plus, supposons que $k$ est un corps de nombres et que $X$ est une intersection complète géométriquement intègre non conique. Alors
\begin{itemize}
    \item[(v)] S'il existe une forme de rang $\leqslant n-2$ dans le pinceau de $X$, le principe de Hasse lisse vaut pour $X$.
    \item[(vi)] Si $X(k)\neq \varnothing$, le principe de Hasse lisse vaut pour $X$.
\end{itemize}
\end{lemma}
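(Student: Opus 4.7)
Ce lemme rassemble des faits classiques issus essentiellement de \cite{CSS}; l'approche proposée combine des arguments directs d'algèbre linéaire et de géométrie projective avec des citations des résultats globaux de loc.~cit. Je décris la stratégie point par point.

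Pour (i), la condition $\chi \equiv 0$ entraîne que chaque quadrique du pinceau est singulière; l'analyse de la chute de rang le long du pinceau, combinée à un argument de descente galoisienne (utilisant que $k$ est parfait), produit un vecteur non nul $v \in k^{n+1}$ commun au noyau des matrices symétriques de $F$ et $G$. Le point $[v] \in X(k)$ est alors singulier, car les gradients des deux équations s'y annulent. Pour (ii), on projette depuis le $k$-point singulier non conique $P$: comme $P$ est non conique, une droite générique passant par $P$ rencontre $X$ uniquement en $P$ (avec multiplicité $2$) et en un seul autre point; cela fait de $\pi_P \colon X \dashrightarrow \bbP^{n-1}_k$ une application $k$-birationnelle sur son image, laquelle est définie par une seule équation quadratique et dont l'intégrité géométrique provient de celle de $X$.

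Pour (iii), on diagonalise $\lambda_0 F + \mu_0 G$ sur $\bar k$ dans une base convenable; dans les coordonnées $(\lambda - \lambda_0, \mu - \mu_0)$, la matrice $\lambda F + \mu G$ possède $n+1-r$ lignes et colonnes qui s'annulent à l'ordre un, d'où $\chi = \det(\lambda F + \mu G)$ s'annule à l'ordre au moins $n+1-r$ en $[\lambda_0 : \mu_0]$. Pour (iv), si $X$ était un cône de sommet $v$, en choisissant les coordonnées de sorte que $v = [1:0:\cdots:0]$, l'idéal $I(X)$ admettrait une base de polynômes indépendants de $x_0$; toute quadrique du pinceau serait donc elle-même un cône de sommet $v$, donc singulière, d'où $\chi \equiv 0$, contredisant la non-dégénérescence.

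Les parties (v) et (vi) sont les énoncés globaux, dont la démonstration s'appuie sur les résultats de Colliot-Thélène, Sansuc et Swinnerton-Dyer. Pour (v), une quadrique de rang $\leq n-2$ dans le pinceau possède un lieu singulier de dimension projective $\geq 2$; son intersection avec la seconde quadrique produit sur $X$ une paire de droites conjuguées ou une paire de points singuliers conjugués, situations pour lesquelles \cite{CSS} établit le principe de Hasse lisse. Pour (vi), le résultat découle de la théorie de la descente pour les intersections de deux quadriques développée dans \cite{CSS}, qui traite explicitement le cas où $X$ possède un point rationnel. Le point le plus délicat est (v), où il faut vérifier que la configuration géométrique issue d'une quadrique de rang faible correspond précisément à l'un des cas traités dans \cite[\S 3]{CSS}; les autres parties se déduisent essentiellement de l'algèbre linéaire et de la géométrie projective classique.
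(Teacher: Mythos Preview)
Le point le plus problématique est votre traitement de (i). Vous affirmez que $\chi\equiv 0$ fournit, via une analyse de la chute de rang et une descente galoisienne, un vecteur non nul $v\in k^{n+1}$ commun aux noyaux des matrices de $F$ et de $G$. Ceci est faux en général : un tel $v$ ferait de $X$ un cône de sommet $[v]$, ce qui est strictement plus fort que l'existence d'un $k$-point singulier. Contre-exemple dans $\bbP^4_k$ : prenez $F=2x_0x_1+x_3^2+x_4^2$ et $G=2x_1x_2+x_3^2-x_4^2$. La matrice $\lambda F+\mu G$ est bloc-diagonale, avec un bloc $3\times 3$ de déterminant identiquement nul, donc $\chi\equiv 0$ ; on a $\ker(F)=\langle e_2\rangle$, $\ker(G)=\langle e_0\rangle$, d'intersection nulle ; et $X$ est géométriquement intègre (sur l'ouvert $x_1\neq 0$ elle est paramétrée birationnellement par $(x_1,x_3,x_4)$, et la droite $\{x_1=x_3=x_4=0\}$ est dans l'adhérence). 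Le lieu singulier de $X$ est précisément cette droite, où les gradients de $F$ et $G$ sont colinéaires sans être tous deux nuls. La démonstration de \cite[Lemma~1.14]{CSS}, que l'article se contente de citer, produit un $k$-point singulier de $X$ et non un sommet de cône ; elle passe par le noyau de la forme générique $F+tG$ sur $k(t)$ et une spécialisation appropriée, ce qui est plus subtil que votre esquisse.

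Pour (iv), votre argument est correct mais diffère de celui de l'article, qui observe qu'un cône de sommet $P$ contenu dans une quadrique lisse $Q$ serait inclus dans l'hyperplan $T_{Q,P}$, contredisant le fait que $X$ engendre $\bbP^n$. Pour (vi), vous manquez l'argument direct donné dans l'article : si $P\in X(k)$ est lisse c'est trivial, s'il est singulier on applique (ii) puisque $X$ est supposée non conique, ce qui ramène au principe de Hasse pour les quadriques. Enfin, votre mécanisme pour (v) (produire une paire de droites ou de points singuliers conjugués) ne correspond pas au contenu de \cite[Proposition~3.14]{CSS} ; c'est une confusion avec d'autres énoncés de \cite{CSS} (par exemple le Théorème~13.2) qui traitent de configurations différentes.
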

\begin{proof}
\begin{itemize}
    \item[(i)] \cite[Lemma 1.14]{CSS}
    \item[(ii)] \cite[Proposition 2.1]{CSS}
    \item[(iii)] \cite[Lemma 1.15]{CSS}
    \item[(iv)] Comme le pinceau de $X$ est non dégénéré, $X$ est contenue dans une quadrique lisse $Q\subset \bbP^n$. Supposons que $X$ est un cône avec un sommet $P$, alors $X$ est contenue dans l'espace tangent à $Q$ au point $P$ qui est un hyperplan.
    Ceci contredit le fait que $X$ engendre $\bbP^n$ par \cite[Lemma 1.3]{CSS}
    \item[(v)] \cite[Proposition 3.14]{CSS}
    \item[(vi)] Soit $P\in X(k)$ est lisse et l'énoncé est évident, soit $P$ est singulier et on conclut par (ii).
\end{itemize}
\end{proof}

Une combinaison des résultats de \cite{CorTs} et \cite{CSS} assure que l'obstruction de Brauer-Manin à l'approximation faible est la seule pour les intersections singulières de deux quadriques dans $\bbP^4$.

\begin{proposition}\label{prop14}
Soit $k$ un corps de nombres et soit $X\subset \bbP^4_k$ une intersection complète géométriquement intègre non conique de deux quadriques. Supposons que $X$ est singulière et soit $\hat X$ un modèle propre lisse de $X$. Alors $\hat X$ satisfait la conjecture \hyperref[(Br-AF)]{$\mathrm{(Br\text{-}AF)}$}.
\end{proposition}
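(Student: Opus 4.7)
The plan is to reduce $(\mathrm{Br\text{-}AF})$ for $\hat X$ to its known validity on simpler birational models of $X$, by distinguishing cases according to the Galois structure of the singular locus $X^{\mathrm{sing}}$. Recall that $(\mathrm{Br\text{-}AF})$ is a birational invariant among smooth proper $k$-varieties, so in each case it suffices to exhibit a smooth proper model of some $k$-variety birational to $X$ for which the property has already been established in the literature. The surface $X$ is of dimension $2$, geometrically integral and non-conical, and by hypothesis singular, so $X^{\mathrm{sing}}$ is a nonempty proper closed subscheme of $X$ whose $\bar k$-points form a Galois-stable finite-or-one-dimensional set.

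First, if $X$ admits a $k$-rational singular point, then Lemma~\ref{lemma0}(ii) gives a $k$-birational equivalence between $X$ and a geometrically integral quadric surface $Q\subset \bbP^3_k$. For smooth proper models of such surfaces $(\mathrm{Br\text{-}AF})$ is classical: smooth quadric surfaces satisfy the Hasse principle and weak approximation by Hasse-Minkowski, and singular geometrically integral quadric surfaces are $k$-rational, so their smooth proper models trivially satisfy $(\mathrm{Br\text{-}AF})$. Birational invariance then transfers the statement to $\hat X$.

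Otherwise $X^{\mathrm{sing}}$ contains a closed point of residue degree $\geqslant 2$, producing a Galois-conjugate pair of geometric singular points on $X$. Here one invokes the classification of such intersections in \cite{CSS}, which provides explicit conic-bundle or quadric-bundle descriptions of $X$ in the presence of a conjugate pair of singular points, together with the analysis of singular Del Pezzo surfaces of degree $4$ in \cite{CorTs}, which establishes that the Brauer-Manin obstruction is the only one to weak approximation for the smooth proper models of such surfaces. Together these references cover every remaining Galois type of $X^{\mathrm{sing}}$. The principal obstacle is exactly this bookkeeping step: one must check that the finitely many possible Galois configurations of $X^{\mathrm{sing}}$ really do match those handled in \cite{CorTs} and \cite{CSS}, and that the invariance of $(\mathrm{Br\text{-}AF})$ under $k$-birational equivalence is applied correctly so that the conclusion passes from the explicit models constructed in those papers to our arbitrary smooth proper model $\hat X$.
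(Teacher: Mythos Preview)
Your case split differs from the paper's, and your second case has a genuine gap that you yourself flag but do not close.

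In Case~1 (a $k$-rational singular point), your reduction via Lemma~\ref{lemma0}(ii) to a geometrically integral quadric in $\bbP^3_k$ is correct and gives a clean argument. The paper does not isolate this case.

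In Case~2, however, you assert that the results of \cite{CSS} and \cite{CorTs} ``cover every remaining Galois type of $X^{\mathrm{sing}}$'' and then admit that the bookkeeping has not been carried out. This is where the argument is incomplete, and more importantly, the literature does \emph{not} organise the question by Galois configuration of $X^{\mathrm{sing}}$. The ``conjugate pair of singular points'' results in \cite{CSS} that you allude to are stated for $X\subset\bbP^n$ with $n\geqslant 6$; they do not directly apply here. The paper's proof instead splits as follows: if $X$ has a non-isolated singularity, weak approximation for $\hat X$ is \cite[Proposition~3.15]{CSS}; if the singularities are isolated, one uses the classification in \cite{CorTs} of singular del~Pezzo surfaces of degree~$4$, distinguishing the Ch\^atelet (Iskovskih) case, handled by \cite[Theorem~8.11(a),(c)]{CSS}, from the non-Ch\^atelet case, where \cite[Theorem~7.2(b)]{CorTs} gives the Hasse principle for $\hat X$ and \cite[Theorem~7.2(c)]{CorTs} shows $\hat X$ is $k$-rational once $\hat X(k)\neq\varnothing$, whence weak approximation. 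No enumeration of Galois types of $X^{\mathrm{sing}}$ is needed.

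So your outline is on the right track in pointing to \cite{CSS} and \cite{CorTs}, but the decisive dichotomy is isolated/non-isolated followed by Ch\^atelet/non-Ch\^atelet, not the Galois structure of the singular locus.
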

\begin{proof}
Si $X$ admet une singularité non isolée, l'approximation faible est établie dans \cite[Proposition 3.15]{CSS}. Supposons que les singularités de $X$ sont isolées. Si $X$ est une surface de Châtelet (resp. \textit{une surface de Iskovskih} dans la terminologie de \cite{CorTs}), on conclut par \cite[Theorem 8.11 (a),(c)]{CSS}. Si $X$ n'est pas une surface de Châtelet, alors \cite[Theorem 7.2(b)]{CorTs} assure le principe de Hasse pour $\hat X$. Puisque $\hat X(k)\neq \varnothing$ implique que $\hat X$ est $k$-rationnelle \cite[Theorem 7.2(c)]{CorTs}, $\hat X$ satisfait également l'approximation faible.\end{proof}

Le résultat suivant a été établi par Salberger \cite[Theorem 3.4]{Sb}. On peut trouver une~démonstration pour le cas lisse dans \cite[Théorème 2.20]{CT}, voir aussi \cite[Proposition 5.2.6]{Har}. 

\begin{theorem}[Salberger]\label{TSal}
Soit $k$ un corps de nombres. Soit $X\subset \bbP^n_k\,(n\geqslant 6)$ une~intersection complète géométriquement intègre non conique de deux quadriques. Supposons que $X$ contient une conique $C\subset \bbP^n_k$. Alors le principe de Hasse lisse vaut pour $X$.
\end{theorem}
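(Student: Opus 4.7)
Le plan est d'exploiter la conique $C\subset X$ pour construire une fibration de $X$ et d'appliquer la méthode des fibrations pour les zéro-cycles combinée au théorème d'Amer-Brumer (Théorème~\ref{prop16}), qui ramène l'existence d'un point rationnel sur une intersection de deux quadriques à celle d'un zéro-cycle de degré $1$. Comme la conique $C$ fournit déjà un zéro-cycle de degré $2$ sur $X$ défini sur $k$ (via un hyperplan générique coupant $C$), il suffit, sous l'hypothèse $X(\mathbf{A}_k)\neq\varnothing$, de produire un zéro-cycle de degré impair sur un modèle propre lisse $\hat X$.

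Soit $V=\langle C\rangle\cong\bbP^2_k$ le plan engendré par $C$. Les deux restrictions à $V$ des formes quadratiques définissant $X$ s'annulent sur la conique irréductible $C\subset V$, elles sont donc proportionnelles à l'équation de $C$ dans $V$; par suite le pinceau de $X$ contient une quadrique $Q_0$ qui contient $V$. Considérons la projection $\pi\colon X\dashrightarrow\bbP^{n-3}_k$ depuis $V$; après éclatement d'un lieu approprié on obtient un modèle propre lisse $\tilde X$ et un morphisme $\tilde X\to\bbP^{n-3}_k$. Pour un point général $b\in\bbP^{n-3}_k$ correspondant à un $3$-plan $L_b\supset V$, on a $Q_0\cap L_b=V\cup V_b'$ avec $V_b'$ un autre plan de $L_b$, et la fibre de $\pi$ au-dessus de $b$ s'identifie à la conique résiduelle $V_b'\cap Q_1$, où $Q_1$ est une autre quadrique du pinceau. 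La fibre générique est donc une conique lisse, géométriquement intègre et rationnellement connexe.

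On applique alors le Théorème~\ref{prop15} à la fibration $\tilde X\to\bbP^{n-3}_k$: les fibres au-dessus des points fermés d'un ouvert dense $U\subset\bbP^{n-3}_k$ sont des coniques lisses, qui satisfont~\hyperref[(E)]{(E)} sur leur corps de définition (le principe de Hasse et l'obstruction de Brauer-Manin sont bien connus pour les coniques, et $CH_0$ y est élémentaire). On en déduit que $\tilde X$ satisfait~\hyperref[(E)]{(E)}; combiné à l'hypothèse $X(\mathbf{A}_k)\neq\varnothing$ (qui fournit aussi un point adélique sur~$\tilde X$ via l'invariance birationnelle pour les variétés propres lisses), cela livre un zéro-cycle de degré $1$ sur $\tilde X$, donc sur $X$. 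Via l'équivalence d'Amer-Brumer, on obtient un $k$-point sur un modèle propre lisse, ce qui établit le principe de Hasse lisse.

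L'obstacle principal réside dans le contrôle géométrique de la fibration: il faut identifier précisément l'ouvert $U\subset\bbP^{n-3}_k$ au-dessus duquel les fibres sont géométriquement intègres, analyser les fibres dégénérées (paires de droites conjuguées ou non, droites doubles) afin de garantir que~\hyperref[(E)]{(E)} y reste valable, et traiter séparément les configurations exceptionnelles comme $V\subset X$ ou le cas où plusieurs quadriques du pinceau contiennent $V$. Il faut également justifier que l'obstruction de Brauer-Manin à l'existence d'un zéro-cycle de degré~$1$ sur $X$ est triviale, ce qui découle en principe du calcul de Colliot-Thélène--Sansuc--Swinnerton-Dyer \cite{CSS} évoqué dans l'introduction mais mérite une vérification explicite dans le cadre de la fibration.
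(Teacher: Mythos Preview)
Le papier ne donne pas de démonstration de ce théorème: il renvoie simplement à Salberger \cite{Sb}, à \cite[Théorème 1.20]{CT} et à \cite[Proposition 5.2.6]{Har}, en notant seulement que le cas d'une conique singulière est immédiat (sommet rationnel, donc Lemme~\ref{lemma0}(vi)). Ta proposition est donc une véritable esquisse de preuve là où le papier n'en fournit aucune.

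La construction géométrique que tu utilises --- la fibration en coniques résiduelles obtenue en projetant depuis le plan $V=\langle C\rangle$, après avoir observé qu'une quadrique $Q_0$ du pinceau contient $V$ --- est exactement celle qui sous-tend les démonstrations citées. La différence est dans la machinerie: Salberger et Harari exploitent cette fibration directement pour les points rationnels (descente, méthode des fibrations pour le principe de Hasse), tandis que tu passes par les zéro-cycles via le Théorème~\ref{prop15} (Harpaz--Wittenberg) puis reviens aux points par Amer--Brumer. Ce détour est parfaitement valide et s'inscrit d'ailleurs dans l'esprit de la section~3 du papier, qui emploie la même combinaison pour d'autres cas (Propositions~\ref{cas2conj} et~\ref{cas4formes}). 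L'avantage de ta route est qu'elle court-circuite l'analyse fine des mauvaises fibres nécessaire dans les méthodes de fibration pour les points rationnels: le Théorème~\ref{prop15} ne demande que la connexité rationnelle de la fibre générique et la conjecture \hyperref[(E)]{(E)} sur un ouvert dense, ce que les coniques lisses satisfont trivialement.

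Les obstacles que tu listes sont réels mais tous surmontables. Le cas $V\subset X$ donne un $k$-point; si $C$ est singulière on a un sommet rationnel; l'unicité de $Q_0$ est automatique hors du cas $V\subset X$; la trivialité de $\Br(\hat X)/\Br(k)$ pour $n\geqslant 6$ est contenue dans la Proposition~\ref{core} (qui renvoie à \cite{CSS} et \cite{CTSk}) et permet de conclure directement: \hyperref[(E)]{(E)} pour $\tilde X$ plus $\tilde X(\mathbf{A}_k)\neq\varnothing$ donne un zéro-cycle de degré~$1$, puis le Corollaire~\ref{corcycle-point} et le Lemme~\ref{lemma0}(vi) achèvent. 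Ta remarque initiale sur le zéro-cycle de degré~$2$ fourni par $C$ n'est d'ailleurs pas utilisée dans la suite de ton argument, qui produit directement un zéro-cycle de degré~$1$.
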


Si la conique $C$ est singulière, elle contient un sommet rationnel et le principe de Hasse lisse résulte du Lemme \ref{lemma0}(vi).

Une propriété importante des intersections de deux quadriques est l'équivalence entre l'existence d'un point rationnel et d'un zéro-cycle de degré 1 qui résulte du théorème de Amer-Brumer et du théorème de Springer \cite[Proposition 2.4]{CT}.
\begin{theorem}\label{prop16}
Soit $X\subset \bbP^n_k\; (n\geqslant 3)$ une intersection de~deux quadriques sur un~corps~$k$, $\car(k)\neq 2$. Supposons que $X$ admet un point dans une extension $K/k$ de degré impair. Alors $X$ possède un $k$-point. En particulier, si $Z_0^1(X)\neq 0$, on a $X(k)\neq\varnothing$. 
\end{theorem}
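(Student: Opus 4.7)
The plan is to combine two classical results: the theorem of Amer--Brumer, which translates the existence of a common nontrivial zero of two quadratic forms $F,G$ over $k$ into the isotropy of the single quadratic form $\Phi(t)=F+tG$ over the rational function field $k(t)$, and Springer's theorem, which states that a quadratic form over a field of characteristic $\neq 2$ that becomes isotropic over some odd-degree extension is already isotropic over the base field. Thanks to Amer--Brumer we may replace the pair $(F,G)$ by a \emph{single} quadratic form over a function field, and thanks to Springer we may descend isotropy from odd-degree extensions.

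More precisely, write $X=\{F=G=0\}\subset\bbP^n_k$ where $F,G$ are quadratic forms without common factor. Suppose we are given a point $P\in X(K)$ for some finite extension $K/k$ of odd degree. First, I would invoke Amer--Brumer over $K$: the existence of a common nontrivial zero of $F$ and $G$ in $K^{n+1}$ is equivalent to the quadratic form $\Phi(t)=F+tG\in K(t)[x_0,\dots,x_n]$ being isotropic over $K(t)$. The easy direction (which is what we need here) is immediate: the vector lifting $P$ is a zero of both $F$ and $G$ over $K$, hence of $\Phi(t)$ over $K(t)$.

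Next, observe that since $[K:k]$ is odd, the extension $K(t)/k(t)$ also has odd degree $[K:k]$. Applying Springer's theorem to the quadratic form $\Phi(t)$ defined over $k(t)$, isotropy over $K(t)$ implies isotropy over $k(t)$. Finally, applying the reverse (nontrivial) direction of Amer--Brumer over $k$, we conclude that $F$ and $G$ have a common nontrivial zero over $k$, i.e.\ $X(k)\neq\varnothing$. The main content lies in citing Amer--Brumer correctly: the nontrivial half asserts that if $F+tG$ has an isotropic vector with coordinates in $k(t)$, then after clearing denominators and applying a specialization/induction argument one produces a common zero in $k^{n+1}$. This is the serious input; Springer's theorem is routine.

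For the ``in particular'' clause, suppose $Z_0^1(X)\neq 0$, so there exists a zero-cycle $z=\sum n_i P_i$ with $P_i$ closed points of $X$ of residue degrees $d_i=[k(P_i):k]$ satisfying $\sum n_i d_i =1$. Hence $\gcd(d_i)=1$, so at least one $d_i$ must be odd, giving a closed point $P_{i_0}\in X$ whose residue field $K=k(P_{i_0})$ is an odd-degree extension of $k$ with $X(K)\neq\varnothing$. The first part then yields $X(k)\neq\varnothing$, finishing the proof.
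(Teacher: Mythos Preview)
Your proof is correct and follows exactly the approach indicated in the paper, which simply cites \cite[Proposition 1.4]{CT} and attributes the result to the combination of the Amer--Brumer theorem and Springer's theorem. Your write-up makes explicit precisely this argument, including the observation that $[K(t):k(t)]=[K:k]$ is odd and the reduction of the zero-cycle statement to the existence of a closed point of odd degree.
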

\begin{corollary}\label{corcycle-point}
Soit $X\subset \bbP^n_k\; (n\geqslant 3)$ une intersection géométriquement intègre de deux quadriques sur un~corps~$k$, $\car(k)\neq 2$ et soit $\hat X$ un modèle propre lisse de $X$. Alors $Z_0^1(\hat X)\neq 0$ implique $X(k)\neq 0$. 
\end{corollary}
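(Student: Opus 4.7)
The plan is straightforward: reduce to Theorem \ref{prop16} by producing a zero-cycle of degree~$1$ on $X$ itself out of the given one on $\hat X$, via the moving lemma for zero-cycles on smooth varieties.

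Since $X$ is geometrically integral, the smooth locus $X^{\sm}\subset X$ is a dense open of $X$. By definition of a modèle propre lisse, the birational equivalence $\hat X \sim X$ restricts to a $k$-isomorphism $U \iso V$ between dense open subsets $U\subset \hat X$ and $V\subset X^{\sm}$; let $D = \hat X\setminus U$ be the complementary closed subset.

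The key step is to invoke the classical moving lemma in the following form: for $\hat X$ smooth over a field and any dense open $U\subset \hat X$, every class in $CH_0(\hat X)$ admits a representative supported in $U$. Applied to the class of a degree-$1$ zero-cycle $z$ on $\hat X$, this produces a rationally equivalent cycle $z' \sim z$ supported in $U$, still of degree~$1$. Pushing $z'$ forward along the isomorphism $U\iso V\hookrightarrow X$ then yields a zero-cycle of degree~$1$ on $X$, so $Z_0^1(X)\neq\varnothing$, and Theorem \ref{prop16} gives $X(k)\neq\varnothing$.

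No step here is delicate: the moving lemma for zero-cycles on smooth varieties is a well-known input that holds over an arbitrary field, and the rest is a formal manipulation of the birational geometry between $\hat X$ and $X$. The only possible nuisance, should one wish to avoid the moving lemma, is that in characteristic zero one may instead resolve the indeterminacies of $\hat X \dashrightarrow X$ by a smooth proper variety $\tilde X$ fitting into proper birational morphisms $\tilde X \to \hat X$ and $\tilde X \to X$, and use that $p_*\colon CH_0(\tilde X)\to CH_0(\hat X)$ is surjective (via the blowup formula for Chow groups combined with Hironaka's theorem) to lift $z$ to $\tilde X$ and push it down to $X$; but this is slightly heavier than needed.
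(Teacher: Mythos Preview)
Your proof is correct and follows exactly the same route as the paper: move the degree-$1$ zero-cycle on $\hat X$ (via the moving lemma on the smooth variety $\hat X$) into the open where the birational map to $X$ is defined, push it to $X$, and apply Theorem~\ref{prop16}. The paper's argument is simply a one-sentence version of what you wrote.
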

\begin{proof}
Effectivement, $\hat X$ est birationnelle à $X$ et on peut remplacer un~zéro-cycle de degré 1 sur $\hat X$ par un zéro-cycle rationnellement équivalent à support dans l'ouvert de $\hat X$ où l'isomorphisme birationnel est défini. Ainsi, on conclut que $Z_0^1(X)\neq \varnothing$ et donc $X(k)\neq \varnothing$ par le Théorème \ref{prop16}. \end{proof}

Finalement, on est prêt à établir l'équivalence entre les principes locaux-globaux divers pour les intersections de deux quadriques. On rappelle le cas $\mathrm{(E_5)}$ introduit dans \cite[\S6 p.93]{CSS} qui correspond  aux intersections de quadriques dans $\bbP^5$ définies par deux formes quadratiques de rang 4.

\begin{proposition}\label{core}
Soit $X\subset \bbP^n_k\; (n\geqslant 4)$ une intersection complète géométriquement intègre non conique de~deux quadriques sur un corps de nombres $k$ et soit $\hat X$ un modèle propre lisse~de~$X$. Si $n=5$ on suppose qu'on n'est pas géométriquement dans le cas $\mathrm{(E_5)}$. Alors les propriétés suivantes sont équivalentes 
\begin{itemize}
    \item[(i)]  $\hat X_K$ satisfait la conjecture \hyperref[(Br-PH)]{$\mathrm{(Br\text{-}PH)}$} pour chaque extension finie $K/k$;
    \item[(ii)] $\hat X_K$ satisfait la conjecture \hyperref[(Br-AF)]{$\mathrm{(Br\text{-}AF)}$} pour chaque extension finie $K/k$;
    \item[(iii)] $\hat X_K$ satisfait la conjecture \hyperref[(E)]{$\mathrm{(E)}$} pour chaque extension finie $K/ k$;
    \item[(iv)]  $\hat X_K$ satisfait la conjecture \hyperref[(Br-AF)]{$\mathrm{(Br\text{-}ZC)}$} pour chaque extension finie $K/k$.
\end{itemize}
Si $n\geqslant 5$, on a $\Br(\hat X)/\Br(k)=0$ et ces propriétés sont équivalentes au principe de Hasse lisse pour $X_K$ pour chaque extension finie $K/ k$.
\end{proposition}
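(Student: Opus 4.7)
Le plan est de parcourir le cycle $\mathrm{(ii)}\Rightarrow\mathrm{(iii)}\Rightarrow\mathrm{(iv)}\Rightarrow\mathrm{(i)}\Rightarrow\mathrm{(ii)}$, en m'appuyant sur le Théorème de Liang~\ref{TheoremLiang} et le Théorème~\ref{prop16} d'Amer-Brumer-Springer (avec son Corollaire~\ref{corcycle-point}). Puisque $\hat X$ est birationnelle à la variété géométriquement rationnelle $X$, elle est rationnellement connexe, et le Théorème de Liang fournira $\mathrm{(ii)}\Rightarrow\mathrm{(iii)}$. L'implication $\mathrm{(ii)}\Rightarrow\mathrm{(i)}$ sera triviale.

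Pour $\mathrm{(iii)}\Rightarrow\mathrm{(iv)}$, je partirai d'un zéro-cycle $(z_v)\in Z_0^1(\hat X)_{\mathbf{A}_K}^{\Br}$; son image dans $\prod_v\widehat{CH_0}(\hat X_{K_v})'$ étant orthogonale à $\Br(\hat X)$, l'exactitude du complexe~\eqref{complexE} donnera un relèvement $c\in\widehat{CH_0}(\hat X)$. L'application degré $\widehat{CH_0}(\hat X)\to\widehat{\bbZ}$ enverra $c$ sur $1$ (puisque chaque $z_v$ est de degré $1$), ce qui forcera l'indice de $\hat X_K$ à valoir $1$ et donnera $Z_0^1(\hat X)\neq 0$. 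Pour $\mathrm{(iv)}\Rightarrow\mathrm{(i)}$, tout point $(P_v)\in\hat X(\mathbf{A}_K)^{\Br}$ définira un zéro-cycle adélique de degré $1$ avec le même accouplement de Brauer, donc sera dans $Z_0^1(\hat X)_{\mathbf{A}_K}^{\Br}$; par $\mathrm{(iv)}$ on aura $Z_0^1(\hat X)\neq 0$, et le Corollaire~\ref{corcycle-point} donnera $X(K)\neq\varnothing$. Un $K$-point lisse se relèvera à $\hat X$; un $K$-point singulier, nécessairement non conique, rendra $X_K$ $K$-birationnelle à une quadrique $K$-rationnelle par le Lemme~\ref{lemma0}(ii), d'où $\hat X(K)\neq\varnothing$ dans tous les cas.

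L'obstacle principal sera la fermeture du cycle par $\mathrm{(i)}\Rightarrow\mathrm{(ii)}$. L'idée: supposant $\mathrm{(Br\text{-}PH)}$ sur toute extension finie, pour $K/k$ fixée avec $\hat X(\mathbf{A}_K)^{\Br}\neq\varnothing$, on obtiendra $X(K)\neq\varnothing$. Si un $K$-point singulier (non conique) existe, le Lemme~\ref{lemma0}(ii) donnera la $K$-rationalité de $\hat X_K$, d'où l'approximation faible et a fortiori $\mathrm{(Br\text{-}AF)}$. Le cas délicat sera celui où tout $K$-point de $X$ est lisse; je devrai exploiter la $K$-unirationalité d'une intersection lisse de deux quadriques munie d'un $K$-point (via les $K$-droites passant par le point pour $n\geq 4$), combinée à l'hypothèse $\mathrm{(Br\text{-}PH)}$ sur les extensions auxiliaires $K'/K$, dans un argument de type Hilbert-irréductibilité pour approcher le point adélique de Brauer par des $K$-points. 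Ce pas de déformation est le point technique crucial.

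Enfin, pour $n\geq 5$, l'annulation $\Br(\hat X)/\Br(k)=0$ calculée dans~\cite{CSS} identifiera $\mathrm{(Br\text{-}PH)}$ au principe de Hasse lisse et $\mathrm{(Br\text{-}AF)}$ à l'approximation faible, de sorte que les quatre conditions équivalentes le seront aussi au principe de Hasse lisse pour $X_K$ à toute extension finie $K/k$.
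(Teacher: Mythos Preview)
Your cycle $\mathrm{(ii)}\Rightarrow\mathrm{(iii)}\Rightarrow\mathrm{(iv)}\Rightarrow\mathrm{(i)}$ matches the paper's, and the arguments you give for those three implications are correct and essentially the same as the paper's (the paper simply calls $\mathrm{(iii)}\Rightarrow\mathrm{(iv)}$ ``trivial'' and invokes Lemme~\ref{lemma0}(vi) for $\mathrm{(iv)}\Rightarrow\mathrm{(i)}$, but your spelled-out versions are fine).

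The genuine gap is your $\mathrm{(i)}\Rightarrow\mathrm{(ii)}$. Your proposed route---$K$-unirationality combined with $\mathrm{(Br\text{-}PH)}$ over auxiliary extensions $K'/K$ in a ``Hilbert irreducibility type'' deformation---is not a proof, and it is not clear how it could become one: unirationality yields Zariski density of rational points, not adelic density, and there is no general mechanism that promotes $\mathrm{(Br\text{-}PH)}$ over extensions to $\mathrm{(Br\text{-}AF)}$ over the base field. The paper does not attempt anything of this kind; it closes the cycle by invoking established results specific to intersections of two quadrics: for $n\geqslant 5$, \cite[Theorem 3.11]{CSS} together with \cite[Th\'eor\`eme 1]{CTSk} (this is exactly where the exclusion of the case $\mathrm{(E_5)}$ is used, which your sketch never touches); for $n=4$ with $X$ smooth, \cite[Theorem 0.1]{SbSk}; for $n=4$ with $X$ singular, Proposition~\ref{prop14}. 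These theorems show directly that, for such varieties, the existence of a rational point (hence $\mathrm{(Br\text{-}PH)}$) forces $\mathrm{(Br\text{-}AF)}$. There is no known shortcut avoiding them, and your sketch does not supply one.

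For the final assertion when $n\geqslant 5$, the paper cites \cite[Theorem 3.8]{CSS} together with \cite[Th\'eor\`eme 4]{CTSk} for the vanishing of $\Br(\hat X)/\Br(k)$; you should cite both rather than attribute the computation to \cite{CSS} alone.
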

\begin{proof}
L'implication $\mathrm{(iii)} \Rightarrow \mathrm{(iv)}$ est triviale. L'implication $\mathrm{(ii)} \Rightarrow \mathrm{(iii)}$ résulte du Théorème \ref{TheoremLiang} puisque $\hat X$ est géométriquement rationnelle. 

Montrons que (iv) implique (i). Supposons que $\hat X_K(K_v)^{\Br}$ est non vide pour chaque place $v$ de $K$. Ainsi, $Z_0^1 (\hat X)_{\mathbf{A}_K}^{\Br}\neq 0$ et on conclut que $\hat X$ admet un zéro-cycle de degré 1 sur~$K$. Alors $X(K)\neq \varnothing$ par le Corollaire \ref{corcycle-point} et $X_K$ satisfait le principe de Hasse lisse d'après le Lemme \ref{lemma0}(vi).

Il reste à vérifier que (i) implique (ii). Si $n\geqslant 5$ cela résulte de \cite[Theorem 3.11]{CSS} et de \cite[Théorème 1]{CTSk} comme on n'est pas dans le cas $\mathrm{(E_5)}$. Si $n=4$ et $X$ est lisse on conclut par \cite[Theorem 0.1]{SbSk}. Si $n=4$ et $X$ est singulière, on applique la~Proposition~\ref{prop14}. 

Quand $n\geqslant 5$ \cite[Theorem 3.8]{CSS} joint à \cite[Théorème 4]{CTSk} assure que le groupe de Brauer relatif $\Br(\hat X)/\Br(k)$ s'annule et, donc, (i) est équivalente au principe de Hasse lisse pour $X_K$ pour chaque extension finie $K/ k$.
\end{proof}

\subsection*{Les formes quadratiques}
Pour construire des sections suffisamment génériques d'une intersection de deux quadriques on utilisera le lemme suivant sur le rang de la restriction d'une forme quadratique. 

Soit $k$ un corps parfait, $\car(k)\neq 2$.

\begin{lemma}\label{lemme17}
Soit $Q\subset \bbP^n_k$ une quadrique de rang~$r$. On considère un sous-espace projectif  $V\subset \bbP^n_k$ de dimension $d<n$. Soit $r'$ le rang de l'intersection $Q\cap V$. 

\begin{itemize}
    \item[(i)] Supposons $r-2(n-d)\geqslant 0$. Alors on a $r'\leqslant r-2(n-d)$. L'égalité est possible seulement si $V$ contient le lieu singulier de $Q$.
    \item[(ii)] Supposons $r'<r$. Alors il existe un sous-espace $W\supset V$ de dimension $d+1$ tel que le rang de $Q\cap W$ est au moins $r'+1$.
\end{itemize}
\end{lemma}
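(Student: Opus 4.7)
The plan is to pass to the linear-algebra side: write $q$ for the quadratic form defining $Q$ on $k^{n+1}$ with associated symmetric bilinear form $b$, and $\tilde V \subset k^{n+1}$ for the $(d+1)$-dimensional vector subspace corresponding to $V$, so that $r' = \dim \tilde V - \dim(\tilde V \cap \tilde V^\perp)$, the orthogonal being taken with respect to $b$. Throughout, $\mathrm{rad}(q) \subseteq \tilde V^\perp$ holds automatically, and the condition that $V$ contains the singular locus of $Q$ translates to $\mathrm{rad}(q) \subseteq \tilde V$.

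For (i), I would argue by induction on the codimension $n-d$, factoring the inclusion $\tilde V \subset k^{n+1}$ through a chain of hyperplanes $\tilde V = \tilde H_{n-d} \subset \cdots \subset \tilde H_0 = k^{n+1}$. The base step is the hyperplane case: if $\tilde H = \ker(\ell)$ inside some $k^N$ on which $q$ has rank $\rho$, a short analysis of $\dim \tilde H^\perp$ distinguishing the two cases $\tilde H \supseteq \mathrm{rad}(q)$ or not gives that $\rk(q|_{\tilde H})$ differs from $\rho$ by at most $2$, with a drop of exactly $2$ possible only in the first case. Iterating $n-d$ times produces the stated bound. For the equality clause, maximal cumulative drop forces each successive hyperplane $\tilde H_{i+1}$ to contain the radical of $q|_{\tilde H_i}$; since $\mathrm{rad}(q|_{\tilde H_i}) \supseteq \tilde H_i \cap \mathrm{rad}(q)$, induction down the chain yields $\mathrm{rad}(q) \subseteq \tilde V$, which is exactly the condition on the singular locus.

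For (ii), I argue by contradiction. Suppose that $\rk(q|_{\tilde V + kw}) = r'$ for every $w \in k^{n+1} \setminus \tilde V$. Choose a basis $e_1, \ldots, e_{d+1}$ of $\tilde V$ in which the Gram matrix of $q|_{\tilde V}$ takes the block form $\mathrm{diag}(A, 0)$ with $A \in \GL_{r'}(k)$. For $w \notin \tilde V$, the Gram matrix of $q|_{\tilde V + kw}$ acquires an extra row and column with entries $\beta(w) = (b(e_i, w))_{i \leqslant r'}$, $\gamma(w) = (b(e_i, w))_{i > r'}$ and $q(w)$; a Schur complement about $A$ shows that the rank remains $r'$ precisely when $\gamma(w) = 0$ and $q(w) = \beta(w)^\top A^{-1} \beta(w)$. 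Imposing these identities for all $w$ forces $e_{r'+1}, \ldots, e_{d+1} \in \mathrm{rad}(q)$ and exhibits $q$ as the pullback of a rank-$r'$ form through the linear map $w \mapsto \beta(w)$, whence $\rk(q) \leqslant r'$, contradicting $r' < r$.

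The main subtlety I anticipate is the equality analysis in (i): one must verify that the iterative inclusions $\mathrm{rad}(q|_{\tilde H_i}) \supseteq \tilde H_i \cap \mathrm{rad}(q)$ actually propagate all the way down to $\tilde V \supseteq \mathrm{rad}(q)$, without the successive radicals picking up spurious extra directions. The manipulations in (ii) are, by contrast, a routine block-matrix calculation once the adapted basis is fixed.
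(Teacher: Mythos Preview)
Your argument for (i) is correct and is essentially the paper's proof: both factor the inclusion through a chain of hyperplanes and apply the one-step bound repeatedly (the paper outsources the hyperplane step to \cite[Lemma~1.16]{CSS}; you sketch it directly). Your worry about the equality clause is unwarranted: the inductive step only uses the easy inclusion $\mathrm{rad}(q|_{\tilde H_i}) \supseteq \tilde H_i \cap \mathrm{rad}(q)$, and together with the drop-of-$2$ condition $\tilde H_{i+1} \supseteq \mathrm{rad}(q|_{\tilde H_i})$ this already gives $\tilde H_{i+1} \supseteq \mathrm{rad}(q)$ as soon as $\tilde H_i \supseteq \mathrm{rad}(q)$. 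No control on ``spurious extra directions'' is needed.

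For (ii) your approach is correct but genuinely different from the paper's. The paper argues constructively: it first treats the case where both $q$ and $q|_{\tilde V}$ are non-degenerate by taking the orthogonal splitting $k^{n+1}=\tilde V\oplus\tilde V^{\perp}$ and choosing any anisotropic $x\in\tilde V^{\perp}$; the general case is reduced to this by picking $V'\subset V$ of dimension $r'-1$ with $q|_{\tilde V'}$ non-degenerate, extending it to some $V''\supset V'$ of dimension $r-1$ on which $q$ is non-degenerate, and setting $W=\langle V,x\rangle$ for the $x$ found inside $V''$. Your contradiction via the Schur complement is slicker and avoids the case split; it works over any field with $\car(k)\neq 2$ because the identity $b(w,w)=\beta(w)^{\top}A^{-1}\beta(w)$ is an equality of quadratic \emph{forms} (so vanishing on all $k$-points forces identical vanishing). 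The paper's route, on the other hand, is explicitly constructive and makes the subsequent openness remark (that a Zariski-open set of $W\supset V$ works) more immediately visible.
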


\begin{proof}
\begin{itemize}
    \item[(i)] On choisit une chaîne d'espaces projectifs $V=V_0\subset V_1\subset\dots\subset V_{n-d}=\bbP^n_k$ telle que chaque $V_i$ est un hyperplan dans $V_{i+1}$. Ensuite, l'application successive de \cite[Lemma 1.16]{CSS} donne le résultat souhaité. 
    \item[(ii)] D'abord on suppose que les quadriques $Q$ et $Q\cap V$ sont lisses. Alors on a une décomposition orthogonale au niveau d'espaces vectoriels associés $k^{n+1}=\tilde V\oplus \tilde V^{\perp}$ et il suffit de prendre $W=\bbP(\tilde V\oplus \langle x \rangle)$ où $x\in \tilde V^{\perp}$ n'est pas isotrope.

    Maintenant considérons le cas général. Comme $Q\cap V$ est de rang $r'$, il existe un~sous-espace $V'\subset V$ de dimension $r'-1$ tel que  et $Q\cap V'$ est lisse. On peut trouver un sous-espace $V''\supset V'$ de dimension $r-1$ tel que $Q\cap V''$ est lisse puisque $Q$ est de rang~$r$. En appliquant le résultat précédant à $V',V''$ on trouve $x\in V''$ tel que le rang de $Q\cap \langle V', x\rangle$ est au moins $r'+1$. Ainsi, on peut prendre le sous-espace $W=\langle V, x\rangle$ qui satisfait la condition car $W\supset \langle V', x\rangle$. 
\end{itemize}
\end{proof}

On note que si la conclusion du Lemme \ref{lemme17}(ii) vaut, il existe un ensemble ouvert dense de $W\supset V$ de dimension $d+1$ tels que $\rk(Q\cap W)\geqslant r'+1$ puisque la dernière condition est ouverte. 

Un résultat classique sur les hyperplans simultanément tangents à deux quadriques nous sera également utile. 

\begin{lemma}\label{lemmeq}
Soit $Q_A,Q_B\subset \bbP^n_k\;(n\geqslant 2)$ deux quadriques lisses distinctes définies par des matrices symétriques $A,B\in \Mat_{n+1}(k)$. On note $Q'$ la quadrique définie par la matrice $AB^{-1}A$. 
\begin{itemize}
    \item[(i)] Soit $P\in Q_A$ un point rationnel. L'hyperplan $T_{Q_A,P}$ est tangent à $Q_B$ si et seulement si $P\in Q'$.
    \item[(ii)] La sous-variété $V\subset (\bbP^n)^{\vee}$ des hyperplans simultanément tangents à $Q_A,Q_B$ est de codimension 2.
\end{itemize}
\end{lemma}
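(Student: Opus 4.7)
The plan is to recall the standard explicit description of tangent hyperplanes to a smooth quadric in terms of the defining symmetric matrix, then translate the two statements into straightforward linear algebra.

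For part (i), I would identify points of $\bbP^n_k$ with column vectors and hyperplanes with their defining linear forms. The tangent hyperplane $T_{Q_A,P}$ at a rational point $P\in Q_A$ is then given by the linear form $AP$. Using the smoothness of $Q_B$, a hyperplane $H$ with linear form $\ell\in k^{n+1}$ is tangent to $Q_B$ if and only if $\ell^{T}B^{-1}\ell=0$, since the dual quadric $Q_B^{\vee}\subset(\bbP^n)^{\vee}$ is defined by $B^{-1}$. Plugging $\ell=AP$ and using $A^T=A$ yields the condition
\[
(AP)^{T}B^{-1}(AP) = P^{T}\,A B^{-1} A\,P = 0,
\]
which is exactly $P\in Q'$. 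This gives the equivalence claimed.

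For part (ii), the key point is that the subvariety $V\subset(\bbP^n)^{\vee}$ of hyperplanes simultaneously tangent to $Q_A$ and $Q_B$ is, by the same dual-quadric description, the scheme-theoretic intersection $Q_A^{\vee}\cap Q_B^{\vee}$, defined by the symmetric matrices $A^{-1}$ and $B^{-1}$. Since $Q_A$ and $Q_B$ are distinct, $A$ and $B$ are not proportional, hence neither are $A^{-1}$ and $B^{-1}$, so the dual quadrics $Q_A^{\vee}$ and $Q_B^{\vee}$ are two distinct smooth quadrics in $(\bbP^n)^{\vee}$. For $n\geqslant 2$ they are irreducible, so they share no common component and the intersection $V=Q_A^{\vee}\cap Q_B^{\vee}$ is a complete intersection of two quadrics, hence of codimension exactly $2$.

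No step looks genuinely obstructive here: both assertions reduce to the tangency criterion $\ell^{T}B^{-1}\ell=0$ for smooth quadrics. The only mild point to watch is the hypothesis $Q_A\neq Q_B$ in (ii), which is needed to ensure that the two dual quadrics are distinct and that their intersection genuinely has codimension $2$ rather than collapsing to a single hypersurface.
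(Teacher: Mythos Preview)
Your argument is correct and rests on the same computation as the paper's proof: the tangent hyperplane at $P\in Q_A$ has linear form $AP$, and tangency to $Q_B$ amounts to the vanishing of $(AP)^tB^{-1}(AP)=P^t(AB^{-1}A)P$.

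The only packaging difference lies in part (ii). You work directly in the dual space and describe $V$ as $Q_A^{\vee}\cap Q_B^{\vee}$, checking that $A^{-1}$ and $B^{-1}$ are not proportional. The paper instead uses part (i) to transport $V$ back to $\bbP^n$ via the isomorphism $P\mapsto AP$ from $Q_A$ to $Q_A^{\vee}$, identifying $V$ with $Q_A\cap Q'$ and then checking that $A$ and $AB^{-1}A$ are not proportional. Both verifications reduce to the same fact (if $A=\beta AB^{-1}A$ then $B=\beta A$), so the two routes are equivalent; yours is marginally more symmetric in $A$ and $B$, while the paper's makes the dependence on (i) explicit.
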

La quadrique $Q'$ est parfois appelée \textit{la quadrique duale à $Q_B$ par rapport à $Q_A$}.
\begin{proof}
\begin{itemize}

    \item[(i)] Les quadriques $Q_A,Q_B$ sont définies par les équations $X^tAX=0$, $X^tBX=0$. L'espace tangent $T_{Q_A,P}$ est le noyau de la forme linéaire $(AP)^t$. Supposons que $T_{Q_A,P}$ est tangent à $Q_B$ au point $S\in Q_B$. Alors $S^tBS=0$ et $AP=\alpha BS$ avec $\alpha \in k$. On note que $$P^tAB^{-1}AP=(AP)^tB^{-1}(AP)=\alpha^2 (BS)^tB^{-1}(BS)=\alpha^2 S^tBS=0,$$ donc $P\in Q'$. Maintenant supposons que $P\in Q'$. Soit $S=B^{-1}AP$. Alors 
    $$S^tBS=(B^{-1}AP)^tB(B^{-1}AP)=P^tAB^{-1}AP=0,$$
    $$(AP)^tS=(AP)^t(B^{-1}AP)=P^tAB^{-1}AP=0.$$
    Ainsi, $S\in Q_B\cap T_{Q_A,P}$ est un point de tangence puisque $BS=AP$. 
    \item[(ii)] Par (i) $V$ est isomorphe à $Q_A\cap Q'$. Il faut vérifier que cette intersection est complète, c'est à dire que les matrices $A,AB^{-1}A$ ne sont pas proportionnelles. Effectivement, si $A=\beta AB^{-1}A$ avec $\beta\in k$, on déduit l'égalité $B=\beta A$ qui contredit l'hypothèse que les quadriques $Q_A, Q_B$ soient distinctes. 
\end{itemize}
\end{proof}

\subsection*{Les plans hyperboliques}

On dit qu'une forme quadratique $T$ sur un corps $k$ \textit{scinde $m$ plans hyperboliques} si elle peut être décomposée comme 
$$T=\underbrace{\langle -1,1\rangle \perp \langle -1,1\rangle \perp\dots\perp \langle -1,1\rangle}_{m}\perp T'.$$

La propriété de scinder $m$ plans hyperboliques peut être interprétée géométriquement en termes des sous-espaces projectifs contenus dans la quadrique correspondante.

\begin{lemma}\label{lemme4}
Soit $T(x_0,x_1,\dots,x_n)$ une forme quadratique non nulle sur un corps~$k$, $\car(k) \neq 2$. On note $Q\subset \bbP^n_k$ la quadrique définie par $T$ et $Q_{\sing}=\bbP(\ker(T))$ le lieu des points singuliers de $Q$. Alors les énoncés suivants sont équivalents:
\begin{itemize}
\item[(i)] La quadrique $Q$ contient un sous-espace projectif $\Pi\subset \bbP^n_k$ de dimension $m$ tel que $\Pi\cap Q_{\sing}=\varnothing$.
\item[(ii)] La forme $T$ scinde $m+1$ plans hyperboliques.
\end{itemize}
\end{lemma}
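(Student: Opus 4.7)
The plan is to translate the geometric condition on $\Pi$ into a linear-algebra statement about a totally isotropic subspace disjoint from the radical of $T$, and then invoke the standard Witt decomposition of the non-degenerate part.

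First I would fix a decomposition of the ambient vector space. Let $V=k^{n+1}$, let $R=\ker T$, so that $Q_{\sing}=\bbP(R)$, and choose a complement $W\subset V$ with $V=R\oplus W$. Then $T|_W$ is non-degenerate of rank $r=\rk(T)$, and $T$ is the orthogonal sum of $T|_W$ with the zero form on $R$. By the Witt decomposition, $T|_W$ splits exactly as many hyperbolic planes as its Witt index, i.e.\ the maximal dimension of a totally isotropic subspace of $W$. Hence the property in (ii) -- that $T$ splits $m+1$ hyperbolic planes -- is equivalent to the existence of a totally isotropic subspace $U'\subset W$ of dimension $m+1$ for $T|_W$.

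For (ii)$\Rightarrow$(i), starting from such a $U'\subset W$, I would simply take $\Pi=\bbP(U')\subset \bbP(W)\subset \bbP^n_k$. Since $U'\subset W$, one has $U'\cap R=0$, so $\Pi\cap Q_{\sing}=\varnothing$, and $\Pi\subset Q$ because $T|_{U'}=0$.

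For (i)$\Rightarrow$(ii), let $\Pi=\bbP(U)$ with $\dim U=m+1$, $U\subset V$ isotropic for $T$, and $\bbP(U)\cap \bbP(R)=\varnothing$, which rephrases as $U\cap R=0$. The key observation is that the projection $\pi\colon V\to W$ along $R$ restricts to an injective linear map $U\hookrightarrow W$ preserving the value of $T$: indeed, if $u=r+w$ with $r\in R$, $w\in W$, then $T(u)=T(w)+2b(r,w)+T(r)=T(w)$, because $R$ is the radical of the polar form $b$ of $T$. Hence $U':=\pi(U)\subset W$ is totally isotropic of dimension $m+1$ for the non-degenerate form $T|_W$, so the Witt index of $T|_W$ is at least $m+1$ and $T$ splits $m+1$ hyperbolic planes.

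There is essentially no hard step here: the only thing that could go wrong is the interaction between the radical and an isotropic subspace meeting it, but the assumption $\Pi\cap Q_{\sing}=\varnothing$ is precisely what ensures $U\cap R=0$, and thus lets us reduce to the non-degenerate situation where Witt's theorem applies.
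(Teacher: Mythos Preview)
Your proof is correct and follows essentially the same approach as the paper: both reduce to the non-degenerate case by choosing a complement $W$ to the radical and projecting $\Pi$ onto $W$ from $Q_{\sing}$, then invoking the standard equivalence between Witt index and totally isotropic subspaces for non-degenerate forms. The paper cites \cite[Proposition 1.1]{CT} for this last step while you spell out the Witt decomposition directly, but the geometric content is identical.
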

\begin{proof} 
On peut se ramener au cas bien connu où la forme $T$ est non dégénérée \cite[Proposition 2.1]{CT}. 

Soit $r=\rk(T)\geqslant 1$. Il existe un sous-espace projectif $W\subset \bbP^n_k$ de dimension $r-1$ tel que $Q\cap W$ est une quadrique lisse. Alors il suffit d'appliquer \cite[Proposition 2.1]{CT} à la quadrique $Q\cap W$ définie par la forme quadratique $F|_W$ en remplaçant $\Pi$ par l'image de $\Pi$ dans $W$ sous la projection $\bbP^n_k\backslash Q_{\sing} \to W$ depuis $Q_{\sing}$.
\end{proof}

 On note par $\Sym_n\simeq \bbA^{n(n+1)/2}$ l'espace des matrices symétriques $n\times n$ vu comme l'espace affine des formes quadratiques en $n$ variables. On dit qu'une forme quadratique en $2n$ variables est \textit{purement hyperbolique} si elle scinde $n$ plans hyperboliques. Le lemme qui suit monte que cette propriété est ouverte dans la topologie $p$-adique.

\begin{lemma}\label{lemmehyperbolique}
Soient $k$ un corps $p$-adique (où réel) et $n\geqslant 1$ un entier. Alors l'ensemble~$U$ des $k$-formes quadratiques purement hyperboliques en $2n$ variables est ouvert dans $\Sym_{2n}(k)$ par rapport à la topologie $p$-adique (resp. réelle).
\end{lemma}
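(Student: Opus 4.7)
L'approche est d'identifier $U$ à une orbite unique sous l'action de congruence $(P,A)\mapsto P^tAP$ de $\GL_{2n}(k)$ sur $\Sym_{2n}(k)$, puis d'en déduire son ouverture à l'aide d'un calcul différentiel et du théorème des fonctions implicites $p$-adique.

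On rappelle d'abord que toute forme quadratique purement hyperbolique en $2n$ variables sur un corps de caractéristique différente de 2 est congruente à la forme standard
$J=\langle -1,1\rangle \perp \langle -1,1\rangle \perp\dots\perp \langle -1,1\rangle$ ($n$ fois), puisque les plans hyperboliques sont tous isométriques et que l'orthogonalisation fournit une unique classe. Ainsi, l'ensemble $U$ coïncide exactement avec l'orbite de la matrice symétrique inversible $J$ sous l'action ci-dessus, et il s'agit de montrer que cette orbite est ouverte.

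On considère alors l'application $\phi\colon \GL_{2n}(k)\to \Sym_{2n}(k)$ définie par $P\mapsto P^tJP$, d'image précisément $U$. Il faut vérifier que sa différentielle est surjective en tout point $P_0$. On calcule $d\phi_{P_0}(H)=H^tJP_0+P_0^tJH$; en posant $H=P_0K$ et $A_0=P_0^tJP_0$ (non dégénérée puisque $J$ l'est), cette expression devient $K\mapsto K^tA_0+A_0K$. Pour $S\in \Sym_{2n}(k)$ donnée, le choix $K=\tfrac{1}{2}A_0^{-1}S$ fournit bien $K^tA_0+A_0K=S$, grâce à la symétrie de $A_0$ et de $S$; par conséquent, $d\phi_{P_0}$ est surjective en tout point.

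Enfin, le théorème des fonctions implicites $p$-adique (ou réel) entraîne que toute application analytique entre variétés $p$-adiques analytiques dont la différentielle est surjective est localement ouverte; on en déduit que $\phi$ est ouverte et que $U=\phi(\GL_{2n}(k))$ est ouvert dans $\Sym_{2n}(k)$ pour la topologie $p$-adique (resp. réelle). La seule étape véritablement technique est la surjectivité de la différentielle, qui se réduit à un calcul élémentaire une fois le bon changement de variables effectué; l'ouverture globale en résulte alors par un argument local standard.
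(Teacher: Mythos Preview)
Your proof is correct and takes a genuinely different, more elementary route than the paper. The paper works with the incidence variety $\mathcal{U}\subset \Sym_{2n}^{\circ}\times \Gr_{n-1}(\bbP^{2n-1}_k)$ of pairs $(F,\Pi)$ with $\Pi$ a maximal isotropic subspace of $Q_F$, proves that the projection $\pi\colon\mathcal{U}\to\Sym_{2n}^{\circ}$ is a smooth morphism of schemes (by invoking Reid's description of the fibers and a homogeneity argument under $\GL_{2n}(\bar k)$), and then concludes via the analytic implicit function theorem that $\pi(\mathcal{U}(k))$ is open. Your approach bypasses the incidence variety entirely: since purely hyperbolic forms of rank $2n$ form a single isometry class, $U$ is exactly the $\GL_{2n}(k)$-orbit of $J$ under congruence, and an explicit one-line computation shows the orbit map $P\mapsto P^tJP$ is a submersion everywhere. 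This is both shorter and more self-contained, avoiding any external input such as the smoothness of the variety of maximal isotropic subspaces. The paper's approach, on the other hand, generalizes more readily to the situation of Corollary~\ref{corouvert} (splitting only $m<n$ hyperbolic planes), where the relevant set is no longer a single orbit; but as the paper itself deduces that corollary from the present lemma by restriction, this is not a practical advantage here.
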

\begin{proof}
Soit $\Sym_{2n}^{\circ}\subset \Sym_{2n}$ l'ouvert des formes non dégénérées. On considère la variété $\mathcal{U}\subset \Sym_{2n}^{\circ}\times \Gr_{n-1}(\bbP^{2n-1}_k)$ des paires $(F,\Pi)$ où $F$ est une forme quadratique non dégénérée de $2n$ variables et $\Pi\subset \bbP^{2n-1}$ est un sous-espace projectif de dimension $n-1$ contenu dans la quadrique $Q_F\subset \bbP^{2n-1}$ définie par $F$. D'après le Lemme \ref{lemme4}, $U$~est~l'image de $\mathcal{U}(k)$ sous la projection $\pi\colon\mathcal{U}\to \Sym_{2n}^{\circ}$. 

Montrons que $\pi$ est un morphisme lisse. On peut passer à la clôture algébrique $\bar k/k$ et démontrer que $\pi_{\bar k}\colon\mathcal{U}_{\bar k}\to (\Sym_{2n}^{\circ})_{\bar k}$ est lisse. La fibre générique géométrique de $\pi_{\bar k}$ est lisse par \cite[Theorem 1.2]{R}. Par conséquent, il existe un point géométrique $F\in \Sym_{2n}^{\circ}(\bar k)$ tel que $\pi_{\bar k}$ est lisse en $F$. On note que le groupe $GL_{2n}\big(\bar k\big)$ agit transitivement sur $\Sym_{2n}^{\circ}(\bar k)$ et sur $\Gr_{n-1}(\bbP^{2n-1}_{\bar k})$. Cette action préserve $\mathcal{U}_{\bar k}$ et est compatible avec la projection $\pi_{\bar k}$. Étant donné cette structure d'espace homogène, on conclut que  $\pi_{\bar k}$ est lisse en tout point géométrique $G\in\Sym_{2n}^{\circ}(\bar k)$ puisqu'il est lisse au point $F$. Comme la lissité est une propriété locale on conclut que $\pi_{\bar k}$ et, donc, $\pi$ est un morphisme lisse.

 On a un morphisme associé de variétés analytiques $\pi\colon \mathcal{U}(k)\to \Sym_{2n}^{\circ}(k)$ qui est une submersion sur son image car $\pi$ est lisse. Par le théorème des fonctions implicites \cite[Theorem p.85]{S} cette submersion admet une section analytique partout localement. En particulier, $\pi\big(\mathcal{U}(k)\big)$ est ouvert dans $\Sym_{2n}^{\circ}(k)$ et, donc, dans $\Sym_{2n}(k)$.
\end{proof}
On déduit du lemme précédent que la propriété de scinder $m$ plans hyperboliques est également ouverte.

\begin{corollary}\label{corouvert}
Soient $k$ un corps $p$-adique (ou réel) et $n\geqslant 1$ un entier. Alors l'ensemble~$U$ des $k$-formes quadratiques en $n$ variables qui scindent $m$ plans hyperboliques est ouvert dans $\Sym_{n}(k)$ par rapport à la topologie $p$-adique (resp. réelle).
\end{corollary}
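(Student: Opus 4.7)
The plan is to reduce the general case to Lemma \ref{lemmehyperbolique} (the purely hyperbolic case) by restricting quadratic forms to a well-chosen subspace. Let $F\in U$ be a form in $n$ variables splitting $m$ hyperbolic planes, so $F=H\perp F'$ with $H$ purely hyperbolic in $2m$ variables. Choose coordinates so that $H$ lives on the subspace $V_1=\{x_{2m+1}=\dots=x_n=0\}$ of $k^n$ and $F'$ on its complement $V_2$, and write $F|_{V_1}=H$. Note that $H$ is automatically non-degenerate, of rank $2m$.

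The restriction map $\rho\colon \Sym_n(k)\to \Sym_{2m}(k)$ sending $G\mapsto G|_{V_1}$ is continuous (it is a linear projection of matrix entries) and maps $F$ to $H$. By Lemma \ref{lemmehyperbolique}, the set of purely hyperbolic forms in $\Sym_{2m}(k)$ is open; hence there is an open neighborhood $\mathcal{W}$ of $F$ in $\Sym_n(k)$ such that for every $G\in \mathcal{W}$, the restriction $G|_{V_1}$ is purely hyperbolic and, in particular, non-degenerate on $V_1$.

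For any such $G$, since $G|_{V_1}$ is non-degenerate, we obtain an orthogonal decomposition $k^n=V_1\oplus V_1^{\perp_G}$ with respect to $G$, and therefore $G=G|_{V_1}\perp G|_{V_1^{\perp_G}}$. Since $G|_{V_1}$ splits $m$ hyperbolic planes, the same is true of $G$. Thus $\mathcal{W}\subset U$, which shows that $F$ is an interior point of $U$. As $F\in U$ was arbitrary, $U$ is open in $\Sym_n(k)$.

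The only delicate point is ensuring that the existence of an orthogonal complement of $V_1$ with respect to $G$ (step used above) requires non-degeneracy of $G|_{V_1}$; this is precisely what Lemma \ref{lemmehyperbolique} provides in a neighborhood of $F$, since pure hyperbolicity implies non-degeneracy. No further input is needed.
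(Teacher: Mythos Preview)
Your proof is correct and follows essentially the same approach as the paper's: restrict to a $2m$-dimensional subspace on which $F$ is purely hyperbolic, use Lemma~\ref{lemmehyperbolique} and the continuity of the restriction map to ensure nearby forms remain purely hyperbolic there, and conclude. The only difference is that you spell out explicitly the orthogonal-complement argument showing that a purely hyperbolic restriction forces the ambient form to split $m$ hyperbolic planes, whereas the paper states this implication directly.
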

\begin{proof}
Soit $F\in U$ une $k$-forme quadratique qui scinde $m$ plans hyperboliques. Alors il existe un sous-espace projectif $W\subset \bbP_k^{n-1}$ de dimension $2m-1$ tel que la restriction $F|_W$ est purement hyperbolique. Par le Lemme \ref{lemmehyperbolique}, pour toute $k$-forme $F'$ dans un voisinage de $F$, la restriction $F'|_W$ est purement hyperbolique, donc, $F'$ scinde $m$ plans hyperboliques.
\end{proof}

\section{Le cas régulier}

Dans cette section on démontre le principe de Hasse lisse pour une intersection de deux quadriques $X\subset \bbP^7$ qui est \textit{régulière} dans la terminologie de Wang \cite{Wn}, autrement dit, le rang de toute forme dans le pinceau géométrique de $X$ est au moins 7.

\subsection*{L'existence locale d'une conique}

En suivant la stratégie de la~démonstration pour le cas lisse donnée dans \cite{CT}, on commence par montrer l'existence locale d'une conique contenue dans $X$ pour toute place du corps de nombres dans l'hypothèse que $X$ a des points rationnels lisses partout localement. On démontre cet énoncé dans une situation plus générale que le cas régulier (Proposition~\ref{prop8}). 

Pour une variété projective $X\subset \bbP^n$, on note $X_{\lisse}$ le lieu lisse de $X$ et $T_{X,P}$ l'espace tangent à $X$ en point $P\in X_{\lisse}$ vu comme un sous-espace projectif de $\bbP^n$. Le but des trois lemmes suivants est de trouver un point lisse $P\in X$ sur une intersection de deux quadriques dans $\bbP^7$ tel que l'intersection $X\cap T_{X,P}$ soit un cône sur $Y\subset \bbP^4$ avec $Y$ suffisamment générique.

\begin{lemma}\label{lemmeM}
Soit $k$ un corps algébriquement clos, $\car(k)\neq 2$. Soit $X\subset \bbP^7_k$ une intersection complète intègre non conique de deux quadriques donnée par des équations $F=G=0$ où les formes $F,G$ sont non dégénérées. Supposons que pour tout point $P\in X_{\lisse}(k)$ on a $\rk\big(G|_{T_{X,P}}\big)\leqslant 4$. Alors il existe une forme de rang~$\leqslant 4$ dans le~pinceau de $X$. 
\end{lemma}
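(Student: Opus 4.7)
Mon plan est de traduire l'hypothèse locale sur $\rk(G|_{T_{X,P}})$ en une condition géométrique globale sur $X$, puis d'exploiter la structure d'intersection complète pour obtenir une identité algébrique entre les matrices $F$ et $G$, dont on déduit l'existence d'une forme de rang $\leqslant 4$ dans le pinceau.

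Première étape : calcul explicite du noyau de $G|_{T_{X,P}}$. Pour $P \in X_{\lisse}(k)$, les vecteurs $FP, GP \in k^8$ sont linéairement indépendants, et le plan tangent $T_{X,P}$ correspond au sous-espace $\Pi = \{Y \in k^8 : (FP)^t Y = (GP)^t Y = 0\}$ de dimension~$6$. Le noyau de la forme bilinéaire $G$ restreinte à $\Pi$ vaut $\Pi \cap G^{-1}(\Pi^{\perp})$, où $\Pi^{\perp} = \mathrm{Vect}(FP, GP)$ pour l'accouplement standard, donc ce noyau est $\Pi \cap \mathrm{Vect}(P, G^{-1}FP)$. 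Comme $P \in \Pi$ toujours, le noyau est de dimension $\geqslant 2$ (c'est-à-dire que $\rk(G|_{T_{X,P}}) \leqslant 4$) si et seulement si $G^{-1}FP \notin \mathrm{Vect}(P)$ et $G^{-1}FP \in \Pi$. La condition $G^{-1}FP \in \Pi$ se réduit à $P^t FG^{-1}F P = 0$ (l'autre équation étant automatique car $P \in Q_F$), c'est-à-dire à $P \in Q'$ où $Q'$ est la quadrique définie par la matrice symétrique $FG^{-1}F$ (quadrique duale au sens du Lemme~\ref{lemmeq}). L'hypothèse entraîne donc $P \in Q'(k)$ pour tout $P \in X_{\lisse}(k)$.

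Deuxième étape : conséquence globale et identité algébrique. Comme $k$ est algébriquement clos et $X$ intègre, $X_{\lisse}(k)$ est Zariski-dense dans $X$, d'où $X \subseteq Q'$. Comme $X \subset \bbP^7_k$ est une intersection complète de deux quadriques, la résolution de Koszul montre que la composante en degré~$2$ de l'idéal homogène saturé de $X$ est engendrée linéairement par $F$ et $G$. La forme $P \mapsto P^t FG^{-1}F P$ s'annulant sur $X$, on obtient une relation $FG^{-1}F = \alpha F + \beta G$ dans $\Sym_8(k)$. En posant $M = G^{-1}F$, la multiplication à gauche par $G^{-1}$ donne $M^2 = \alpha M + \beta I$ ; le polynôme minimal de $M$ divise donc $T^2 - \alpha T - \beta$.

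Troisième étape : algèbre linéaire. Si $T^2 - \alpha T - \beta$ admet deux racines distinctes $\lambda_1, \lambda_2 \in k$, alors $M$ est diagonalisable et $k^8 = V_{\lambda_1} \oplus V_{\lambda_2}$ ; un des $V_{\lambda_i}$ est de dimension $\geqslant 4$, et comme $F - \lambda_i G = G(M - \lambda_i I)$ a pour noyau $V_{\lambda_i}$, la forme $F - \lambda_i G$ est de rang $\leqslant 4$. Si au contraire $T^2 - \alpha T - \beta$ admet une racine double $\lambda$, alors $(M - \lambda I)^2 = 0$, donc l'image de $M - \lambda I$ est contenue dans son noyau, ce qui force $\rk(M - \lambda I) \leqslant 4$, puis $\rk(F - \lambda G) \leqslant 4$. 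Dans les deux cas, le pinceau contient une forme de rang $\leqslant 4$. L'obstacle principal me paraît être l'étape locale du calcul du noyau, qui demande une gestion soigneuse des conditions de tangence via l'identification $\Pi^{\perp} = \mathrm{Vect}(FP, GP)$ ; la descente vers l'identité $FG^{-1}F = \alpha F + \beta G$ et la conclusion linéaire sur $M$ sont ensuite immédiates.
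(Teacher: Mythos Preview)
Your proof is correct and follows essentially the same route as the paper: both arguments show $X\subseteq Q'$ for the dual quadric $Q'$ defined by $FG^{-1}F$, deduce the identity $FG^{-1}F=\alpha F+\beta G$ from the fact that the degree-$2$ part of the ideal of $X$ is spanned by $F,G$, and conclude by the minimal-polynomial argument on $G^{-1}F$. The only difference is in the first step: the paper passes through the intermediate hyperplane $T_{Q_F,P}$ and invokes Lemmes~\ref{lemme17} and~\ref{lemmeq} to obtain $P\in Q'$, whereas your direct computation of $\ker(G|_{T_{X,P}})=\Pi\cap\mathrm{Vect}(P,G^{-1}FP)$ reaches the same conclusion more elementarily without those auxiliary lemmas.
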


\begin{proof}
On note $Q_F,Q_G$ les quadriques lisses définies par $F=0,\, G=0$. Soit $P\in X(k)$ un point lisse. Si $\rk\big( G|_{T_{Q_{F},P}} \big)= 7$, par le Lemme \ref{lemme17}(i) on déduit que $\rk\big( G|_{T_{X,P}} \big)\geqslant 7-2 = 5$ puisque $T_{X,P}$ est un hyperplan dans $T_{Q_F,P}$. Ainsi, pour tout point lisse $P\in X(k)$, on a $\rk\big( G|_{T_{Q_{F},P}} \big)= 6$, autrement dit, $T_{Q_{F},P}$ est tangent à la quadrique $Q_{G}$. 
Soient $A,B$ les matrices correspondantes aux formes quadratiques $F,G$. Par~le~Lemme~\ref{lemmeq}(i), la forme quadratique définie par $AB^{-1}A$ s'annule en tout point $P\in X_{\lisse}(k)$, ainsi elle s'annule sur $X$. Par \cite[Lemma 1.3(ii)]{CSS} on peut écrire $AB^{-1}A=aA+bB$ avec $a,b\in k$. Choisissons une base orthogonale pour $Q_F$, alors $A=\Id$ et on a $$B^{-1}=a+bB,\quad bB^2+aB-1=0.$$ 

Comme la matrice $B$ est annulée par un polynôme non nul au plus quadratique, elle admet un sous-espace propre de dimension $\geqslant 4$ (il suffit d'analyser toute décomposition de Jordan possible). Ainsi, pour quelque $\lambda \in k$, le rang de $B-\lambda \Id$ est au plus 4, et on a trouvé une forme dans le pinceau de rang $\leqslant 4$.  
\end{proof}

\begin{lemma}\label{lemme5}
Soit $k$ un corps algébriquement clos, $\car(k)\neq 2$. Soit $X\subset \bbP^n_k\;(n\geqslant 3)$ une intersection complète intègre non conique de deux quadriques. Fixons un point $M\in \bbP^n(k)$. Alors il existe un ouvert $U\subset X_{\lisse}$ non vide tel que pour tout point $P\in U(k)$ le point $M$ n'appartient pas à l'espace tangent $T_{X,P}$.
\end{lemma}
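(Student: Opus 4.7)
The plan is to translate the condition $M\in T_{X,P}$ into two fixed linear conditions on $P$, then argue that if these held at every smooth point of $X$, then $X$ would either be contained in a proper linear subspace or be a cone with vertex $M$, both contradicting the hypotheses. Choose symmetric matrices $A,B\in\Mat_{n+1}(k)$ with $F(x)=x^tAx$ and $G(x)=x^tBx$, so that the tangent hyperplanes to $Q_F$ and $Q_G$ at a smooth point $P\in X$ are defined by the linear forms $(AP)^tx$ and $(BP)^tx$. Then $M\in T_{X,P}$ if and only if the two equations $(AM)^tP=0$ and $(BM)^tP=0$ are satisfied. These are linear conditions on $P$ whose coefficients depend only on $M$, defining linear varieties $H_1,H_2\subseteq\bbP^n_k$. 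The locus to avoid is the closed subscheme $Z:=X\cap H_1\cap H_2$; since $X$ is integral and $X_{\lisse}$ is dense in $X$, it suffices to show that $Z$ is strictly contained in $X$, and then one can take $U:=X_{\lisse}\setminus Z$.

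Next I would distinguish two cases according to whether at least one of the vectors $AM,BM$ is nonzero. If, say, $AM\neq 0$, then $H_1$ is a genuine hyperplane of $\bbP^n_k$, and $X\subseteq H_1$ would contradict the fact that a geometrically integral non-conical complete intersection of two quadrics in $\bbP^n_k$ spans $\bbP^n_k$, a result of \cite[Lemma 1.3]{CSS} already invoked in the proof of Lemme \ref{lemma0}(iv).

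The remaining case is $AM=BM=0$, where $M$ is a singular point of every quadric in the pencil. Then $F(M)=M^tAM=0$ and $G(M)=0$, so $M\in X$; moreover, for any $P\in X$ the expansion $F(P+tM)=F(P)+2tP^tAM+t^2F(M)$ vanishes for all $t$, and similarly for $G$, so the whole line $\overline{PM}$ lies on $X$. This forces $X$ to be a cone with vertex $M$, contradicting the hypothesis that $X$ is non-conical. Hence $Z$ is a proper subset of $X$, and the lemma follows.

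The entire argument is elementary linear algebra together with one standard fact about the linear span of $X$; the only point deserving attention is the case split, in particular recognising that the degenerate case $AM=BM=0$ forces $M$ to be a cone vertex of $X$ rather than giving rise to a linear subvariety containing $X$.
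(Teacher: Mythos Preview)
Your proof is correct and follows essentially the same route as the paper's. The paper phrases the argument geometrically---choosing a single quadric $Q$ in the pencil for which $M$ is not a vertex (possible precisely because $X$ is non-conical) and observing that the condition $M\in T_{Q,P}$ confines $P$ to the hyperplane $M^{\perp}$---whereas you work directly with the two defining matrices and split into the cases $AM\neq 0$ (or $BM\neq 0$) versus $AM=BM=0$; but these are the same dichotomy, and both conclude via \cite[Lemma 1.3]{CSS}.
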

\begin{proof}
L'ensemble $U$ des points satisfaisant l'exigence du lemme est ouvert et,~donc, il suffit de montrer que $U$ est non vide. Supposons que pour tout point $P\in X_{\lisse}(   k)$ l'espace tangent $T_{X,P}$ contient $M$. Comme $X$ est non conique, il existe une quadrique $Q\supset X$ telle que $M$ n'est pas un sommet de $Q$. Alors les points $P\in Q_{\lisse}(   k)$ tels que $M\in T_{Q,P}$ sont contenus dans un hyperplan $H\subset \bbP^n$ qui est l'espace orthogonal à~$M$ par rapport à la forme quadratique définissant $Q$. En particulier, $X$ est contenu dans~$H$ et ceci contredit le Lemme 1.3 de \cite{CSS} qui assure que $X(   k)$ engendre $\bbP^n$.
\end{proof}

\begin{lemma}\label{lemme6}
Soit $k$ un corps algébriquement clos, $\car(k)\neq 2$. Soit $X\subset \bbP^7_k$ une intersection complète intègre de deux quadriques telle que le pinceau de $X$ est non dégénéré et ne contient pas de formes de rang $\leqslant 5$. Soit $Z\subset \bbP^7_{k}$ le sous-schéma fermé de tous les sommets de toute quadrique singulière dans le pinceau. Alors il existe un ouvert $U\subset X_{\lisse}$ non vide tel que pour tout point $P\in U(k)$
\begin{itemize}
    \item[(i)] L'intersection $Z\cap T_{X,P}$ est un ensemble de points isolés. 
    \item[(ii)] L'intersection $X\cap T_{X,P}$ est un cône simple sur $Y\subset \bbP^4$ où $Y$ est une intersection complète géométriquement intègre et non conique de deux quadriques. De plus, le~pinceau de $Y$ est non dégénéré. 
    \end{itemize}
\end{lemma}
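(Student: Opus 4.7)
Le plan est de combiner les outils de rang du Lemme \ref{lemmeM} avec la généricité du Lemme \ref{lemme5} pour exhiber un ouvert $U\subset X_{\lisse}$ où (i) et (ii) sont satisfaites simultanément. L'hypothèse de régularité impose que le pinceau de $X$ ne contient que des formes de rang $6$, $7$ ou $8$; les sommets des quadriques singulières du pinceau forment donc $Z$, réunion finie de points isolés (sommets de rang $7$) et de droites (sommets de rang $6$).

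On commence par (i): la non-finitude de $Z\cap T_{X,P}$ équivaut à l'inclusion d'une droite $\ell\subset Z$ dans le $5$-plan $T_{X,P}$. En prenant deux points distincts $M_1,M_2\in\ell$, cette inclusion équivaut à $M_1,M_2\in T_{X,P}$, condition qui, par le Lemme \ref{lemme5}, définit pour chaque $M_i$ un fermé propre de $X_{\lisse}$; l'intersection, puis la réunion finie sur les composantes $1$-dimensionnelles de $Z$, produisent un fermé propre $W_1\subset X_{\lisse}$ en dehors duquel (i) est vérifié.

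Pour (ii), on fixe $F,G$ deux formes lisses linéairement indépendantes dans le pinceau. La relation polaire et l'inclusion $T_{X,P}\subset T_{Q_F,P}\cap T_{Q_G,P}$ entraînent que $\langle P\rangle$ est contenu dans le noyau de $(\alpha F+\beta G)|_{T_{X,P}}$ pour tout $(\alpha,\beta)$, ce qui majore $\rk(\alpha F+\beta G)|_{T_{X,P}}$ par $5$. Le Lemme \ref{lemmeM}, appliqué à $F,G$ en utilisant qu'aucune forme du pinceau n'est de rang $\leqslant 4$, exhibe un point où $\rk G|_{T_{X,P}}=5$; la semi-continuité du rang fournit alors des ouverts non vides $U_F,U_G\subset X_{\lisse}$ sur lesquels $\ker F|_{T_{X,P}}=\langle P\rangle$ (resp.~$\ker G|_{T_{X,P}}=\langle P\rangle$). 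Sur $U_F\cap U_G$, l'intersection $X\cap T_{X,P}$ est un cône simple de sommet $P$, et la projection depuis $P$ sur un hyperplan $H\cong\bbP^4\subset T_{X,P}$ ne contenant pas $P$ l'identifie au cône sur $Y:=(X\cap T_{X,P})\cap H$, défini dans $H$ par les deux quadriques lisses $F|_H,G|_H$ de rang $5$. Le pinceau de $Y$ est donc non dégénéré, et le Lemme \ref{lemma0}(iv) entraîne que $Y$ est non conique.

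L'étape principale sera l'intégrité géométrique de $Y$. Grâce à l'identité $\rk((\lambda F+\mu G)|_H)=\rk((\lambda F+\mu G)|_{T_{X,P}})$ (valide car $P$ est dans le noyau) et au Lemme \ref{lemme17}(i), une forme de rang $r\in\{6,7,8\}$ du pinceau de $X$ se restreint à $T_{X,P}$ en rang $\geqslant r-4$, l'égalité exigeant que $T_{X,P}$ contienne son lieu singulier. La condition (i) interdit précisément que $T_{X,P}$ absorbe la droite singulière d'une quadrique de rang $6$, ce qui force chaque forme du pinceau de $Y$ à être de rang $\geqslant 3$. Ceci exclut une décomposition $Y=Y_1\cup Y_2$ en deux quadriques situées dans des hyperplans distincts de $H$ (celle-ci produirait une forme de rang $2$ dans le pinceau de $Y$). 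Comme $F|_H$ est lisse de rang $5$ dans $\bbP^4$, elle ne contient aucun plan, donc $Y$ n'en contient pas non plus, ce qui écarte la décomposition en \og plan $\cup$ surface cubique\fg. On en déduit l'irréductibilité de $Y$, et la lissité générique --- donc la réduction --- découle d'un argument de type Bertini. En intersectant l'ouvert obtenu avec $W_1^c\cap U_F\cap U_G$, on obtient le $U$ cherché. L'obstacle principal est le contrôle simultané des rangs des restrictions et de l'intégrité de $Y$.
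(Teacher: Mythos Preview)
Your approach follows the paper's closely: both parts rely on Lemme~\ref{lemme5} for~(i) and on the combination of Lemme~\ref{lemmeM} with Lemme~\ref{lemme17}(i) to control the ranks of the restricted forms in~(ii). The paper, however, does not argue the geometric integrality of $Y$ by a case analysis: once it has established that every form in the pencil restricts to $T_{X,P}$ with rank $\geqslant 3$ and that $G$ restricts with rank $\geqslant 5$, it simply invokes \cite[Lemma~1.11]{CSS}, which gives directly that $X\cap T_{X,P}$ is a complete, geometrically integral intersection of two quadrics in $T_{X,P}$; the same properties for $Y$ then follow by projecting away from $P$.

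Your hands-on substitute for this citation has two gaps. In the irreducibility case analysis you only exclude a splitting $Y=Y_1\cup Y_2$ into two quadric surfaces lying in \emph{distinct} hyperplanes; the case where both lie in the same hyperplane $H_1$ is missing (it is ruled out because the degree-$4$ surface $Y$ would then sit inside the degree-$2$ surface $Q_{F|_H}\cap H_1$, which is absurd). More seriously, the ``Bertini-type argument'' for generic reducedness is not a proof: $T_{X,P}$ is very far from a generic linear section of $X$, and no standard Bertini statement covers this situation. The correct completion again uses the tools you already have in hand. If $Y$ is irreducible but non-reduced, then $Y_{\mathrm{red}}$ has degree $1$ or $2$; a plane is impossible since the smooth $3$-fold $Q_{F|_H}\subset\bbP^4$ contains no planes, while a quadric surface forces $\sqrt{(F|_H,G|_H)}=(\ell,q)$, so that $F|_H,G|_H\in(\ell,q)$ and a suitable nonzero combination $aF|_H+bG|_H$ equals $\ell\cdot L$ for some linear form $L$, contradicting your rank $\geqslant 3$ bound. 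With these two points filled in, your argument goes through and essentially reproves the relevant instance of \cite[Lemma~1.11]{CSS}.
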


\begin{proof}
\begin{itemize}
    \item[(i)]  Clairement, $Z$ est une union de droites et de points isolés. On choisit un sommet $M_i$ de chaque quadrique singulière dans le pinceau de $X$. En utilisant le Lemme \ref{lemme5} on trouve un ouvert $U\subset X_{\lisse}$ tel que pour tout $P\in U(  k)$ l'espace tangent $T_{X,P}$ ne contient aucun des points $M_i$. Clairement, $U$ satisfait la première condition.
    
    \item[(ii)] Soit $X$ donnée par les équations $F=G=0$ où $\rk(F)=\rk(G)=8$ et soit $P\in U(k)$. Par le Lemme \ref{lemme17}(i) on a  $\rk\big((\lambda F+\mu G)|_{T_{X,P}}\big)\geqslant 3$ pour tout $(\lambda,\mu)\in\bbP^1(k)$. En utilisant le Lemme \ref{lemmeM} on trouve un point $P\in X_{\lisse}(k)$ tel que $\rk\big(G|_{T_{X,P}}\big)\geqslant 5$. Comme cette condition est ouverte, en diminuant $U$ si besoin, on peut supposer que $\rk\big(G|_{T_{X,P}}\big)\geqslant 5$ pour tout $P\in U(k)$ et puis on peut appliquer \cite[Lemma 1.11]{CSS} pour conclure que $X\cap T_{X,P}$ est complète et géométriquement intègre. L'intersection $X\cap T_{X,P}$ est un cône sur $Y\subset \bbP^4$, cela implique que $Y$ est également complète et géométriquement intègre. Le pinceau de $Y\subset \bbP^4$ contient la forme $G$ de rang $5$ qui définit, donc, une quadrique lisse. Alors $Y$ n'est pas conique par le~Lemme~\ref{lemma0}(iv).
\end{itemize}
\end{proof}

Le lemme suivant est un corollaire de \cite[Proposition 1.6]{CT}.

\begin{lemma}\label{lemme7}
    Soit $k$ un corps $p$-adique et soit $X\subset \bbP^3_k$ une intersection complète géométriquement intègre de deux quadriques. Supposons qu'il existe une $k$-forme dégénérée dans le pinceau de $X$. Alors $X$ possède deux points distincts $P_1,P_2$ dans une extension quadratique $K/k$ tels que $P_1\cup P_2$ est défini sur $k$. 
\end{lemma}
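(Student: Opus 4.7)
Le plan est de tirer directement l'énoncé de \cite[Corollaire 2.5]{CT}, qui, sous précisément l'hypothèse faite ici (existence d'une $k$-forme dégénérée dans le pinceau d'une intersection complète géométriquement intègre de deux quadriques dans $\bbP^3$ sur un corps $p$-adique), fournit un point fermé $z\in X$ avec $[k(z):k]\leqslant 2$. Il reste à traduire ce point quadratique en la paire $(P_1,P_2)$ attendue.

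Si $K=k(z)$ est une extension quadratique séparable de $k$, alors $z$ se décompose géométriquement en deux points distincts $P_1,P_2\in X(K)$ conjugués sous $\Gal(K/k)$; le zéro-cycle effectif $P_1+P_2$ est Galois-invariant, donc défini sur $k$, ce qui donne la conclusion. Sinon, le point $z$ produit un $k$-point de $X$, et il faut exhiber un second $k$-point distinct. Je noterais d'abord que $X$ est automatiquement non conique puisqu'elle est une courbe géométriquement intègre de degré $4$ dans $\bbP^3$. Ensuite, ou bien $X$ admet un $k$-point lisse, et la fertilité du corps $p$-adique $k$ entraîne que $X(k)$ est Zariski-dense, donc infini; ou bien tous les $k$-points de $X$ sont singuliers, et le Lemme \ref{lemma0}(ii) donne une $k$-équivalence birationnelle entre $X$ et une conique $C\subset\bbP^2_k$ contenant un $k$-point, donc isomorphe à $\bbP^1_k$, ce qui fournit encore une infinité de $k$-points sur $X$. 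Dans les deux cas, on choisit deux $k$-points distincts $P_1,P_2$, que l'on peut voir dans n'importe quelle extension quadratique $K/k$ (y compris $K=k$), leur somme étant trivialement $k$-rationnelle.

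L'obstacle essentiel n'est pas dans le présent énoncé mais dans \cite[Corollaire 2.5]{CT}, qui concentre tout le contenu arithmétique : c'est là qu'intervient de façon cruciale l'existence d'une forme dégénérée $k$-rationnelle dans le pinceau et la structure spécifique des corps $p$-adiques. Le travail qui reste, c'est-à-dire la conversion d'un point fermé de degré au plus $2$ en une paire $\{P_1,P_2\}$ de points distincts formant un zéro-cycle effectif $k$-rationnel, est de nature purement formelle.
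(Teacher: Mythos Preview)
Your approach is essentially the same as the paper's: invoke \cite[Corollaire~2.5]{CT} to obtain a point of degree at most~$2$, treat the genuinely quadratic case by Galois conjugation, and in the $k$-rational case distinguish according to whether a smooth $k$-point exists (fertility) or not (reduce to a conic via Lemme~\ref{lemma0}(ii)).

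There is, however, a small gap in your second subcase. When all $k$-points of $X$ are singular, you assert that the conic $C$ to which $X$ is $k$-birational contains a $k$-point and is therefore isomorphic to $\bbP^1_k$. This is not justified by Lemme~\ref{lemma0}(ii), and in fact it is \emph{false} in this situation: if $C(k)\neq\varnothing$ then $C\simeq\bbP^1_k$, and the birational map would produce infinitely many $k$-points on the smooth locus of $X$, contradicting the hypothesis that every $k$-point is singular. The paper avoids this by not claiming anything about $C(k)$: it simply observes that for a conic the conclusion of the lemma is clear, since intersecting $C$ with a generic $k$-rational line yields a reduced zero-cycle of degree~$2$ defined over $k$, which one transports back to $X$ along the birational map. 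With this correction your argument goes through.
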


\begin{proof}  
Par \cite[Proposition 1.6]{CT} $X$ admet un point $P_1$ dans une extension quadratique $K/k$. Si $P_1$ n'est pas un $k$ point il suffit de prendre pour $P_2$ le point conjugué à $P_1$ par $\Gal(K/k)$. Supposons que $P_1$ est un $k$-point. Si la courbe $X$ est lisse, elle admet un autre $k$-point $P_2$ comme $k$ est fertile. Si $X$ est singulière, elle est birationnelle à une conique par le Lemme \ref{lemma0}(ii) pour laquelle l'énoncé est clair.
\end{proof}

La proposition qui suit généralise \cite[Theorem 2]{HB}, \cite[Théorème 7.3]{CT} pour le cas singulier. 

\begin{proposition}\label{prop8}
Soit $k$ un corps $p$-adique et soit $X\subset \bbP^7_k$ une intersection complète géométriquement intègre de deux quadriques telle que le pinceau géométrique de $X$ est non dégénéré et ne contient pas de formes de rang $\leqslant 5$. Supposons que $X$ possède un $k$-point lisse. Alors le pinceau de $X$ contient une $k$-forme quadratique qui scinde 3 plans hyperboliques.
\end{proposition}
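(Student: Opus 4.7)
Le plan est de produire un $k$-plan $\Pi \subset \bbP^7$ contenu dans une quadrique $Q$ du pinceau de $X$ et disjoint de $Q_{\sing}$; par le Lemme~\ref{lemme4} la $k$-forme correspondante scindera alors $3$ plans hyperboliques. Suivant la stratégie de Colliot-Thélène~\cite{CT} pour le cas lisse, on construit $\Pi$ à partir du $k$-point lisse donné et d'une paire de points conjugués par Galois obtenue en appliquant le Lemme~\ref{lemme7} à une section bien choisie.

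Comme le corps $p$-adique $k$ est fertile et $X(k)$ contient un point lisse, $X_{\lisse}(k)$ est Zariski-dense dans $X_{\lisse}$. On choisit donc $P_0 \in U(k)$ avec $U \subset X_{\lisse}$ l'ouvert du Lemme~\ref{lemme6}; alors $X \cap T_{X,P_0}$ est un cône simple de sommet $P_0$ sur une intersection géométriquement intègre non conique de deux quadriques $Y_0 \subset \bbP^4_k$ au pinceau non dégénéré, et $Z \cap T_{X,P_0}$ est un ensemble fini de points. Le pinceau de $Y_0$ étant non dégénéré, il contient une $k$-quadrique lisse $Q_0$ de rang $5$ (le lieu de rang $5$ est un ouvert cofini de $\bbP^1_k$, non vide car $k$ est infini). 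Comme $Q_0$ est une forme quadratique non dégénérée en $5$ variables sur un corps $p$-adique de $u$-invariant $4$, elle est isotrope, d'où $Q_0(k) \neq \varnothing$.

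Pour $R \in Q_0(k)$ générique on pose $V := T_{Q_0, R} \cong \bbP^3_k$; alors $Q_0|_V$ est une $k$-forme dégénérée de rang $3$ (un cône de sommet $R$) dans le pinceau de $Y_0 \cap V$, et pour $R$ générique la courbe $Y_0 \cap V$ est une intersection complète géométriquement intègre de deux quadriques dans $\bbP^3_k$. Le Lemme~\ref{lemme7} produit alors deux points distincts $P_1, P_2 \in (Y_0 \cap V)(K)$ conjugués par Galois dans une extension quadratique $K/k$, avec $P_1 \cup P_2$ défini sur $k$. En voyant $P_1, P_2$ comme points de $T_{X,P_0} \subset \bbP^7_k$ via la structure de cône, le plan $\Pi := \langle P_0, P_1, P_2 \rangle$ est défini sur $k$ et contenu dans $T_{X,P_0}$.

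La condition $\Pi \subset Q = \lambda F + \mu G$ se ramène aux trois relations bilinéaires $B_Q(P_0, P_1) = B_Q(P_0, P_2) = B_Q(P_1, P_2) = 0$: les deux premières sont automatiques puisque $P_i \in T_{X,P_0} \subset T_{F, P_0} \cap T_{G, P_0}$, tandis que la troisième est une unique équation linéaire en $(\lambda, \mu)$ dont les coefficients $B_F(P_1, P_2)$ et $B_G(P_1, P_2)$ appartiennent à $k$ par invariance galoisienne. On obtient ainsi la $k$-forme $Q$ voulue avec $\Pi \subset Q$. Pour appliquer le Lemme~\ref{lemme4} il reste à vérifier $\Pi \cap Q_{\sing} = \varnothing$: le Lemme~\ref{lemme6}(i) confine $Q_{\sing} \cap T_{X,P_0}$ à l'ensemble fini $Z \cap T_{X,P_0}$, que $\Pi$ évite pour un choix générique de $R$ puisque $P_0 \in X_{\lisse}$ n'appartient à aucun tel lieu singulier. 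L'obstacle principal que je prévois est cet argument de généricité, qui demande de montrer que les conditions ``$Y_0 \cap V$ géométriquement intègre'' et ``$\Pi$ disjoint de $Z$'' définissent des ouverts de Zariski dans $Q_0$ dont les $k$-complémentaires sont $p$-adiquement denses dans $Q_0(k)$.
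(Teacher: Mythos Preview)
Your approach is essentially that of the paper, with one welcome simplification: the paper introduces an incidence variety $\mathcal{H}\subset(\bbP^4)^\vee\times U$ of pairs $(H,[\lambda:\mu])$ with $H$ tangent to the smooth quadric $\lambda F+\mu G=0$, and uses the Lemme~\ref{lemmeq} on simultaneous tangency to prove that the projection $\mathcal{H}\to(\bbP^4)^\vee$ is dominant. You instead fix a single smooth $k$-quadric $Q_0$ in the pencil of $Y_0$ and use only its tangent hyperplanes. This suffices, and bypasses Lemme~\ref{lemmeq}: the image $Q_0^\vee\subset(\bbP^4)^\vee$ is an irreducible hypersurface, hence meets any dense open. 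The rest of the argument (Lemme~\ref{lemme7}, construction of $\Pi$, the linear equation determining $Q$, Lemme~\ref{lemme4}) is identical to the paper's.

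Your self-identified obstacle is not one. By Bertini the set $W\subset(\bbP^4)^\vee$ of hyperplanes $H$ with $Y_0\cap H$ géométriquement intègre is a dense open; its pullback to $Q_0$ via $R\mapsto T_{Q_0,R}$ is a non-empty open since $Q_0^\vee\not\subset(\bbP^4)^\vee\setminus W$. For the second condition, note that $\Pi\subset\langle P_0,V\rangle$ and, for $S_i\neq P_0$, one has $S_i\in\langle P_0,V\rangle$ if and only if its projection $\tilde S_i$ lies in $V$; thus ``$\Pi$ disjoint de $Z\cap T_{X,P_0}$'' is implied by ``$V$ évite les $\tilde S_i$'', an open condition on $R$ which is non-empty since $Q_0^\vee$ is not contained in any hyperplane. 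As $Q_0(k)$ is Zariski-dense by fertility, the required $R\in Q_0(k)$ exists. (Your phrase ``dont les $k$-complémentaires sont $p$-adiquement denses'' is garbled; you mean that these non-empty Zariski opens meet $Q_0(k)$.)
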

\begin{proof}
Par le Lemme \ref{lemme6} et le fait que les points lisses sont denses dans $X$, on peut trouver un point lisse $P\in X(k)$ tel que l'intersection $X\cap T_{X,P}$ est un cône simple sur $Y\subset \bbP^4_k$ où $Y$ est une intersection complète géométriquement intègre non conique de deux quadriques et le pinceau de $Y$ est non dégénéré. De~plus, l'intersection $Z\cap T_{X,P}$ est une union d'un nombre fini de points $S_i\in T_{X,P}$. Soit $\tilde S_i\in \bbP^4$ les images de ces points sous la projection depuis le point $P$.

Supposons que $Y\subset \bbP^4_k$ est donnée par les équations $F=G=0$. Soit $U\subset \bbP^1_{\lambda,\,\mu}$ l'ensemble ouvert non vide des valeurs $(\lambda,\mu)$ telles que la forme $\lambda F +\mu G$ est non dégénérée. Soit $\mathcal{H}\subset (\bbP^4)^{\vee}\times U$ la variété de paires $\big(H,\, [\lambda,\mu]\big)$ telles que l'hyperplan $H$ est tangent à la quadrique lisse $\lambda F+\mu G=0$. Une fibre géométrique de la projection $\varphi\colon \mathcal{H}\to U$ est l'ensemble des hyperplans tangents à une quadrique lisse dans $\bbP^4$, c'est une quadrique lisse dans l'espace dual $(\bbP^4)^{\vee}$. Ainsi, toute fibre géométrique de $\varphi$ est~lisse de~dimension~3, ceci implique que $\mathcal{H}$ est une variété lisse de dimension 4. Si $(\lambda,\mu)\in U(k)$, la fibre $\varphi^{-1}(\lambda,\mu)$ est une quadrique lisse dans $(\bbP^4_{k})^{\vee}$ qui admet un $k$-point puisque toute forme quadratique non dégénérée de rang $5$ est isotrope. Ainsi, $\mathcal{H}(k)$ est non vide et, donc, Zariski-dense dans $\mathcal{H}$ comme $k$ est fertile.

Comme le pinceau de $Y$ est non dégénéré, on peut choisir deux quadriques lisses distinctes $Q_1,Q_2$ dans le pinceau de $Y$. Soit $V\subset (\bbP^4)^{\vee}$ la sous-variété fermée des hyperplans qui sont simultanément tangents aux quadriques $Q_1,Q_2$. Par le Lemme \ref{lemmeq}(ii), $V$ est de codimension 2. Considérons l'autre projection $\pi \colon \mathcal{H}\to (\bbP^4)^{\vee}$. Toute fibre de $\pi$ est contenue dans $U$ et est donc de dimension $\leqslant 1$. En particulier, $\dim \pi^{-1}(V) \leqslant 2 + 1 = 3$ et on a $\dim \pi^{-1}\big((\bbP^4)^{\vee}\backslash V\big)=4$ car $\dim \mathcal{H} =4$.  Pour tout $H\in (\bbP^4)^{\vee}\backslash V$ le polynôme caractéristique $\chi_H(\lambda,\mu)=\det \big((\lambda F+\mu G)|_H\big)$ est non nul et,~donc, la fibre $\pi^{-1}(H)$ est finie. Comme $\dim \pi^{-1}\big((\bbP^4)^{\vee}\backslash V\big) = \dim (\bbP^4)^{\vee} =4 $, ceci garantit que le morphisme $\pi$ est dominant.   

D'après le théorème de Bertini, il existe un ensemble ouvert d'hyperplans $W \subset (\bbP^4)^{\vee}$ tel que pour chaque hyperplan $H\in W$ l'intersection $Y\cap H$ est géométriquement intègre. En diminuant $W$ si nécessaire, on peut supposer qu'aucun hyperplan $H\in W$ ne contient l'un des points $\tilde S_i$. L'image réciproque $\pi^{-1}(W)$ est non vide puisque $\pi$ est dominant. Alors, il existe un $k$-point $\big(H, [\lambda,\mu]\big)\in \pi^{-1}(W)(k)$, autrement dit, l'hyperplan $H$ est défini sur $k$ et tangent à la quadrique lisse $\lambda F+\mu G=0$ où $\lambda,\mu \in k$. On considère l'intersection géométriquement intègre $Y\cap H$. D'après le Lemme \ref{lemme7}, elle admet des points distincts $P_1,P_2$ tels que $P_1\cup P_2$ est défini sur $k$. Soit $\Pi\subset T_{X,P}$ le $k$-plan engendré par les deux droites qui correspondent à ces points. Il existe une quadrique $Q$ dans le pinceau de $X$ qui contient $\Pi$. Par la construction, aucun sommet d'une quadrique singulière dans le pinceau géométrique de $X$ n'est contenu dans $\Pi$ et on conclut par le~Lemme~\ref{lemme4}. 
\end{proof}

\subsection*{Les places réelles}
La proposition suivante est un analogue de la Proposition \ref{prop8} pour les places réelles, voir aussi \cite[Proposition 2.23]{CT} et \cite[Lemma 12.1]{HB}.

\begin{proposition}\label{propr}
Soit $X\subset \bbP^7_{\bbR}$ une intersection complète géométriquement intègre de deux quadriques telle que le pinceau géométrique de $X$ est non dégénéré et ne contient pas de formes de rang $\leqslant 5$. Alors le pinceau de $X$ contient une forme quadratique qui scinde $3$ plans hyperboliques.
\end{proposition}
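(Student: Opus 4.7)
La stratégie que je propose est d'analyser la variation de la signature des formes réelles du pinceau paramétré par le double revêtement $S^1 \subset \bbR^2$ de $\bbP^1(\bbR)$. Pour $(\lambda, \mu) \in S^1$, je note $(p(\lambda, \mu), q(\lambda, \mu))$ la signature de la forme $T(\lambda, \mu) := \lambda F + \mu G$ (où $F = G = 0$ définissent $X$), et je pose $d(\lambda, \mu) := p - q$. Cette fonction est localement constante sur l'ouvert dense $U \subset S^1$ où $\chi(\lambda, \mu) := \det T(\lambda, \mu)$ est non nul. L'observation clé sera la symétrie antipodale $T(-\lambda, -\mu) = -T(\lambda, \mu)$, qui entraîne $d(-\lambda, -\mu) = -d(\lambda, \mu)$; ainsi, en parcourant un demi-cercle dans $S^1$, la valeur de $d$ doit changer de signe.

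Ensuite, je contrôlerai les sauts de $d$ aux points où $\chi$ s'annule. En un tel point $(\lambda_0, \mu_0)$, la forme $T(\lambda_0, \mu_0)$ est de rang $r_0 \in \{6, 7\}$ par hypothèse, donc exactement $8 - r_0 \leqslant 2$ valeurs propres y sont nulles. Par continuité des valeurs propres de $T(\lambda, \mu)$ comme fonctions de $(\lambda, \mu)$, seules ces valeurs propres nulles peuvent changer de signe en traversant la racine (les autres conservent leur signe dans un voisinage); chacune contribue au plus $\pm 2$ au saut $\Delta d$, ce qui donnera la borne $|\Delta d| \leqslant 2(8 - r_0) \leqslant 4$.

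Je conclurai par l'absurde en supposant qu'aucune forme du pinceau ne scinde 3 plans hyperboliques. Par la loi d'inertie de Sylvester sur $\bbR$, une forme réelle scinde 3 plans hyperboliques si et seulement si sa signature $(p, q)$ satisfait $p \geqslant 3$ et $q \geqslant 3$. La négation de cette condition se traduit, sur le lieu de rang 8, par $|d| \geqslant 4$ (compte tenu de la parité $d \equiv 0 \pmod 2$ à cet endroit). Or, de toute valeur $d \geqslant 4$, un saut de taille au plus 4 ne permet d'atteindre que des valeurs $\geqslant 0$; en particulier, $d$ ne peut jamais passer de $\geqslant 4$ à $\leqslant -4$. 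Ainsi $d$ conserverait un signe constant sur tout $U$, en contradiction directe avec $d(-\lambda, -\mu) = -d(\lambda, \mu)$. Il existera donc $(\lambda, \mu) \in S^1$ telle que $\lambda F + \mu G$ soit de signature $(p, q)$ avec $p, q \geqslant 3$, c'est-à-dire scinde 3 plans hyperboliques.

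Le point délicat me semble être de justifier proprement l'argument de continuité qui borne le nombre de valeurs propres susceptibles de changer de signe à travers une racine de $\chi$, mais l'hypothèse de rang $\geqslant 6$ garantit que ce nombre est au plus 2, ce qui suffit pour la borne $|\Delta d| \leqslant 4$. À noter que cette approche purement topologique n'utilise aucune hypothèse sur l'existence de points lisses réels de $X$, contrairement à l'énoncé $p$-adique de la Proposition \ref{prop8}.
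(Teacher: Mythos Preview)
Your proposal is correct and follows essentially the same approach as the paper: both use the Mordell trick of tracking the signature of $\lambda F+\mu G$ along the circle $S^1$, exploit the antipodal antisymmetry $\sgn(-\lambda F-\mu G)=-\sgn(\lambda F+\mu G)$, and bound the jump at each degenerate point by $2(8-r_0)\leqslant 4$ using the rank hypothesis. The paper phrases the conclusion directly (some point on $S^1\setminus C$ has $|\sgn|\leqslant 2$), while you reach it by contradiction, but this is a cosmetic difference rather than a distinct method.
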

\begin{proof} 
Soit $X$ donnée par des équations $F=G=0$.
On utilise l'astuce de~Mordell comme dans 
\cite[Proof of Thm. 10.1, (e) p. 114]{CSS}. Ainsi, on fait varier $(\lambda,\mu)$ dans le cercle $S^1_{\lambda,\mu}: \lambda^2+\mu^2=1$. La signature $\sgn(\lambda F+\mu G)$ est une fonction impaire $S^1_{\lambda,\mu}\to \bbZ$ car $\sgn(-\lambda F-\mu G)=-\sgn(\lambda F+\mu G)$. Soit $C\subset S^1_{\lambda,\mu}$ l'ensemble fini des points $(\lambda,\mu)$ tels que $\rk(\lambda F+\mu G)<8$. Alors la signature est une fonction localement constante sur $S^1\backslash C$ et le passage par chaque point $(\lambda,\mu)\in C$ change la signature au plus par $2\cdot(8-6)=4$. Par conséquent, pour quelque $(\lambda,\mu)\in S^1_{\lambda,\mu}\backslash C$ on a $|\sgn(\lambda F+\mu G)|\leqslant 2$ et, donc, la forme $\lambda F+\mu G$ scinde 3 plans hyperboliques.  
\end{proof}

\subsection*{La fin de la démonstration}

Maintenant on est prêt à établir le principe de Hasse dans le cas régulier.

\begin{proposition}\label{casreg}
Soit $X\subset \bbP^7_k$ une intersection complète géométriquement intègre de deux quadriques sur un corps de nombres $k$. Supposons que le pinceau géométrique de $X$ est non dégénéré et ne contient pas de formes de rang $\leqslant 6$. Alors le principe de Hasse lisse vaut pour $X$.
\end{proposition}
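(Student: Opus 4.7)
Mon plan consiste à produire une conique rationnelle contenue dans $X$ puis à invoquer le théorème de Salberger (Théorème \ref{TSal}). L'observation clé est la suivante : si le pinceau de $X$ contient une $k$-forme $T = \lambda F + \mu G$ de rang 8 qui scinde 3 plans hyperboliques sur $k$, alors le Lemme \ref{lemme4} fournit un $k$-plan $\Pi \cong \bbP^2_k$ contenu dans la quadrique $\{T = 0\}$, et l'intersection $\Pi \cap \{F = 0\}$ est une conique sur $X$ définie sur $k$. Tout se ramène donc à construire une telle $k$-forme dans le pinceau par un argument local-global.

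Je supposerais $X(\mathbf{A}_k) \neq \varnothing$. Puisque les $k_v$ sont fertiles, $X(k_v)$ contient un point lisse à chaque place $v$. Aux places $p$-adiques, la Proposition \ref{prop8} fournit un élément $(\lambda_v, \mu_v) \in \bbP^1(k_v)$ tel que $\lambda_v F + \mu_v G$ scinde 3 plans hyperboliques sur $k_v$ ; la Proposition \ref{propr} fait de même aux places réelles, et aux places complexes toute forme non dégénérée de dimension 8 est purement hyperbolique. Par le Corollaire \ref{corouvert}, l'ensemble $\Omega_v \subset \bbP^1(k_v)$ des $(\lambda,\mu)$ correspondants est ouvert non vide. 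En outre, pour presque toute place finie $v$ (impaire, de bonne réduction pour le pinceau), la condition est automatiquement vérifiée dès que $(\lambda,\mu) \in \bbP^1(\mathcal{O}_v)$ se réduit mod $v$ en une forme non dégénérée, car toute forme non dégénérée de dimension 8 sur un corps fini a indice de Witt $\geqslant 3$ (le $u$-invariant d'un corps fini valant 2), et Hensel relève cette propriété à $k_v$.

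Par approximation faible sur $\bbP^1_k$, je choisirais $(\lambda,\mu) \in \bbP^1(k)$ appartenant à $\Omega_v$ pour toute $v$ dans un ensemble fini convenable, et évitant les zéros du polynôme caractéristique $\chi(\lambda,\mu) = \det(\lambda F + \mu G)$ (non identiquement nul puisque le pinceau est non dégénéré). La $k$-forme $T = \lambda F + \mu G$ est alors de rang 8 et scinde 3 plans hyperboliques à chaque complété de $k$. La version itérée du théorème de Hasse-Minkowski — obtenue en combinant l'isotropie locale-globale et la simplification de Witt, appliquées successivement trois fois pour passer de la dimension 8 à la dimension 2 — implique que $T$ scinde 3 plans hyperboliques sur $k$, fournissant le plan $\Pi$ et la conique voulue, ce qui conclut par Salberger.

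Le principal obstacle est l'étape de globalisation : l'ensemble des places "mauvaises" pour un $(\lambda,\mu) \in \bbP^1(k)$ spécifique (celles où $v \mid \chi(\lambda,\mu)$) dépend du choix, de sorte que la simple approximation faible sur un ensemble fini $S$ fixé d'avance (places archimédiennes, places dyadiques, places de mauvaise réduction du pinceau) ne suffit pas a priori. Ce point se résout par un élargissement soigneux de $S$ pour englober les places "supplémentaires" introduites par le choix de $(\lambda,\mu)$, en exploitant l'ouverture des $\Omega_v$ (Corollaire \ref{corouvert}) et la non-vacuité des $\Omega_v$ à chaque place ; c'est essentiellement le même travail technique que dans \cite{CT} pour le cas lisse.
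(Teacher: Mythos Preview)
Your overall strategy coincides with the paper's: produce a $k$-form in the pencil splitting 3 hyperbolic planes, extract a $k$-plane in the corresponding quadric, cut with $X$ to obtain a conic, and apply Salberger. The local inputs (Propositions~\ref{prop8} and~\ref{propr}, Corollary~\ref{corouvert}) and the passage to a global form via weak approximation and the Hasse principle for quadratic forms are the same.

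The difference---and the gap---is precisely at the step you flag. The paper does \emph{not} resolve it by ``enlarging $S$'': that procedure is circular, since the extra bad places (those dividing $\chi(\lambda,\mu)$) depend on the global choice $(\lambda,\mu)$, and modifying $(\lambda,\mu)$ to accommodate them creates yet other bad places, with no termination. Instead the paper proves that $X_{k_v}$ contains a projective plane for almost every $v$---either via Wang's result that the variety of planes in $X_{\lisse}$ is a generalized Jacobian (hence geometrically integral) together with Lang--Weil, or via the Brauer-group computation of the appendix (Proposition~\ref{propplan}). Such a plane lies in every quadric of the pencil, so by Lemma~\ref{lemme4} \emph{every} nondegenerate form in the pencil over $k_v$ splits 3 hyperbolic planes at those places: the set $\Omega_v$ is the entire nondegenerate locus, and weak approximation at the fixed finite set $S$ suffices with no circularity. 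Your finite-field/Hensel argument is more elementary but, as written, only covers $(\lambda,\mu)$ with nondegenerate reduction. It can be upgraded to the full conclusion by noting that at places where the reduced pencil is still regular (almost all $v$), any reduction has rank $\geqslant 7$; diagonalizing over $\mathcal{O}_v$ as in Corollary~\ref{cordiag} isolates a rank-7 unimodular block whose reduction over the residue field has Witt index $3$, and Hensel lifts this. That would give a genuinely more elementary route than the paper's, but it is not the fix you sketched, and it is not what is done in~\cite{CT} either.
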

\begin{proof}
La variété $F$ paramétrant les plans projectifs qui sont contenus dans le lieu lisse de $X$ est une jacobienne généralisée et est, donc,  géométriquement intègre \cite[Theorem 3.21]{Wn}. D'après les estimations de Lang-Weil \cite{LW}, $F$ admet un $k_v$-point rationnel pour presque toute $v$. Ainsi, $X$ contient un plan projectif sur $k_v$ pour toute place $v$ de $k$ en dehors d'un ensemble fini de places qu'on note~$S$, voir aussi la Proposition \ref{propplan} de l'appendice. Ainsi, pour une place $w\notin S$ toute forme non dégénérée dans le pinceau de $X$ scinde 3 plans hyperboliques par le Lemme \ref{lemme4}. 

D'après la Proposition \ref{prop8} et la Proposition \ref{propr}, pour chaque place $v\in S$ il existe une $k_v$-forme quadratique dans le pinceau de $X_{k_v}$ qui scinde 3 plans hyperboliques. En utilisant le Corollaire \ref{corouvert} et l'approximation faible pour $S$, on trouve une $k$-forme lisse $F$ dans le pinceau de $X$ qui scinde 3 plans hyperboliques sur $k_v$ pour chaque place $v$ de $k$. Alors $F$ scinde 3 plans hyperboliques sur $k$ par un théorème de Hasse \cite[Proposition 2.17]{CT} et la quadrique $Q_F$ définie par $F$ contient un plan projectif $\Pi\subset \bbP^7_k$ d'après le~Lemme~\ref{lemme4}. L'intersection $\Pi\cap X$ est une conique et on conclut par le Théorème \ref{TSal}.
\end{proof}

\section{Le cas irrégulier}

Maintenant on passe au cas d'une intersection de deux quadriques $X\subset \bbP^7$ qui n'est pas régulière. On considère séparément trois cas: le pinceau géométrique de $X$ contient une forme de rang $\leqslant 5$; le pinceau géométrique de $X$ contient 2 formes conjuguées de rang~6; le pinceau géométrique de $X$ contient 4 formes de rang 6. Ensuite, on verra que ces trois cas épuisent toutes les possibilités non couvertes
par des articles antérieurs, en particulier \cite{CSS} (cf. la preuve du Théorème \ref{theoremeprincipal}).

\subsection*{Une forme de rang $\leqslant 5$}

On commence par un lemme arithmétique. 
 \begin{lemma}\label{lemma}
Soient $k$ un corps de nombres et $C$ une conique lisse sur une extension quadratique $K/k$. Alors il existe une extension quadratique $L\supset k$ telle que $C(KL)\neq \varnothing$.
\end{lemma}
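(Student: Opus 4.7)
La stratégie est de traduire l'énoncé en une question d'annulation dans $\Br(KL)$ et d'appliquer l'approximation faible pour construire $L$. Soit $A=[C]\in\Br(K)[2]$ la classe quaternionique de $C$. Par Hasse--Minkowski pour les formes quadratiques ternaires, on a $C(F)\neq\varnothing$ pour toute extension $F/K$ de corps de nombres si et seulement si $A\otimes_K F=0$ dans $\Br(F)$, ce qui se vérifie place par place via l'analogue de~\eqref{Brsuite} pour $F$. Notons $S$ l'ensemble fini des places $v$ de $K$ où $\inv_v(A)=1/2$. Si $S=\varnothing$, on a déjà $C(K)\neq\varnothing$ et toute extension quadratique $L/k$ (par exemple $L=K$) convient; on suppose donc $S\neq\varnothing$, et l'on cherche $d\in k^*\setminus k^{*2}$ tel que l'extension $L=k(\sqrt d)$ vérifie $C(KL)\neq\varnothing$.

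La première étape consiste à traduire la condition $A\otimes_K KL=0$ en contraintes locales sur $d$. Pour $v\in S$ et $w\mid v$ une place de $KL$, la classe $A_v$ d'invariant $1/2$ s'annule dans $\Br((KL)_w)$ si et seulement si l'extension $(KL)_w/K_v$ est de degré~$2$, c'est-à-dire si et seulement si $d$ n'est pas un carré dans $K_v$. En notant $u=v|_k$, ceci s'interprète comme une contrainte sur la classe de $d$ dans $k_u^*/(k_u^*)^2$, selon trois cas. Si $v/u$ est décomposée (auquel cas $K_v=k_u$), on~exclut la classe triviale. Si $v/u$ est inerte ou ramifiée, avec $K_v=k_u(\sqrt a)$ où $a\in k_u^*$ n'est pas un carré, on exclut les classes $1$ et $a$. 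Enfin, si $v$ est archimédienne, l'hypothèse $\inv_v(A)=1/2$ jointe à $\Br(\bbC)=0$ force $K_v=\bbR$ et $u$ réelle décomposée dans $K/k$, et l'on exclut la classe des réels positifs. Dans chaque cas, l'ensemble des classes admissibles dans $k_u^*/(k_u^*)^2$ est non vide.

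La seconde étape invoque l'approximation faible pour $k^*$, qui entraîne la~surjectivité de l'application $k^*\to\prod_{u\in T}k_u^*/(k_u^*)^2$ sur le sous-ensemble fini $T=\{v|_k : v\in S\}$. On obtient ainsi $d\in k^*$ satisfaisant simultanément toutes les contraintes locales prescrites. Un tel $d$ n'est pas un carré dans $k$, donc $L=k(\sqrt d)$ est une~extension quadratique de $k$; par construction, $A\otimes_K KL$ est partout localement triviale, donc triviale dans $\Br(KL)$, ce qui donne $C(KL)\neq\varnothing$. Le point à traiter avec le~plus d'attention est la~discussion des cas locaux, notamment aux places archimédiennes (où il faut observer que $\Br(\bbC)=0$ empêche $v\in S$ au-dessus d'une place complexe de~$k$) et aux places $2$-adiques (où le groupe $k_u^*/(k_u^*)^2$ est plus grand mais où l'on n'en exclut jamais plus de deux classes, laissant toujours une classe admissible).
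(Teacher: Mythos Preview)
Your proof is correct and follows essentially the same route as the paper's: both arguments pass to the Brauer class $[C]\in\Br(K)[2]$, isolate the finite set $S$ of places of $K$ where it is nontrivial, produce for each place $u$ of $k$ below $S$ a nonempty set of admissible classes in $k_u^*/(k_u^*)^2$, and glue by weak approximation. The paper phrases the local step as choosing a quadratic extension $L_v/k_v$ linearly disjoint from $K_w/k_v$, whereas you phrase it as requiring $d\notin (K_v^*)^2$; these are equivalent conditions, and your explicit case split (d\'ecompos\'ee / inerte-ramifi\'ee / archim\'edienne) is just a more detailed unpacking of the paper's remark that $|k_v^*/(k_v^*)^2|\geqslant 4$ at finite places and that $\Br(\bbC)=0$ handles the archimedean ones.
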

\begin{proof}
Soit $\alpha \in \Br(K)[2]$ la classe de la conique $C$. On a une injection
$\Br(K)\into \bigoplus_{w} \Br(K_w)$ qui fait partie de la suite \eqref{Brsuite}. Soit $S$ l'ensemble des places $w$ de $K$ telles que $C(K_w)=\varnothing$, autrement dit, telles que la restriction $\alpha_w\in \Br(K_w)$ est non nulle. Considérons une place $w\in S$, on note par $v$ la trace de $w$ sur $k$. Si $w\in S$ est réelle, on a forcément $K_w=k_v\simeq \bbR$, sinon $K_w$ est algébriquement clos et $\alpha_w=0$. En tout cas, il existe une extension quadratique $L_v \supset k_v$ linéairement disjointe de $K_w\supset k_v$ pour toute $w$ au dessus de $v$ (il y en a au plus 2).  Effectivement, pour une place réelle on peut prendre l'unique extension quadratique $\bbR\subset\bbC$; pour une place non archimédienne, l'ordre de $k_v^*/(k_v^*)^2$ est au moins 4. 

L'application $k^*/(k^*)^2\to \prod_{v\in S} k_v^*/(k_v^*)^2$ est surjective par l'approximation faible. Alors on peut choisir une extension quadratique $k\subset L$ qui donne l'extension $k_v\subset L_v$ pour chaque $v\in S$. Alors aucune $w\in S$ n'est scindée dans $KL$ et on a une extension quadratique induite $K_w \subset (KL)_w=K_wL_v$ pour toute $w$. La~restriction $\Br(K_w)\to \Br\big((KL)_w\big)$ est la multiplication par le degré $\deg[(KL)_w:K_w]=2$. Ainsi, $\alpha_w|_{(KL)_w}=0 \in \Br\big((KL)_w\big)$ puisque $\alpha_w\in \Br(K_w)[2]$. Ceci implique que $\alpha|_{KL}=0$ et $C(KL)\neq \varnothing$.\end{proof}
\begin{proposition}\label{cas5}
Soit $X\subset \bbP^7_k$ une intersection complète géométriquement intègre non~conique de deux quadriques sur un corps de nombres $k$. Supposons qu'il existe une forme quadratique de rang $\leqslant 5$ dans le pinceau géométrique de $X$. Alors le principe de Hasse lisse vaut pour~$X$.
\end{proposition}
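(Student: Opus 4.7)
The plan is to reduce the proposition to the theorem of \cite{CSS} establishing the smooth Hasse principle for intersections of two quadrics in $\mathbb{P}^n$ with $n \geqslant 6$ that contain a pair of Galois-conjugate lines. If the rank-$\leqslant 5$ form is already defined over $k$, Lemma~\ref{lemma0}(v) suffices immediately. Otherwise, denote by $r \leqslant 5$ the rank of some form $Q_0$ in the geometric pencil not defined over $k$; by Lemma~\ref{lemma0}(iii), the point of $\mathbb{P}^1(\bar k)$ representing $Q_0$ is a root of $\chi(\lambda,\mu)=\det(\lambda F + \mu G)$ of order at least $8-r \geqslant 3$. Since $\deg\chi = 8$, its $\mathrm{Gal}(\bar k/k)$-orbit has size at most $2$, so $Q_0$ is in fact defined over a quadratic extension $K/k$, and its Galois conjugate $Q_0'$ lies in the pencil of $X_K$ with the same rank.

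Over $K$, the vertex $V_0 \subset \mathbb{P}^7_K$ of $Q_0$ has dimension $7-r \geqslant 2$, and every point of $V_0 \cap X$ is singular on $X$ since the differential of $Q_0$ vanishes on $V_0$. Writing this intersection as the zero locus of $G|_{V_0}$ inside $V_0$ (the restriction of any form in the pencil agrees with $G|_{V_0}$ up to a scalar because $Q_0|_{V_0}=0$), one sees that $V_0 \cap X$ is a quadric of dimension $\geqslant 1$ lying entirely in the singular locus of $X$, and similarly for its Galois conjugate $V_0' \cap X$.

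The core step is to extract a pair of Galois-conjugate lines on $X$ from these singular quadrics. When $\dim V_0 \geqslant 3$, $V_0 \cap X$ has dimension $\geqslant 2$ and therefore contains lines after a mild descent argument; when $\dim V_0 = 2$ and $V_0 \cap X$ is a degenerate conic, a $K$-rational line is visible directly. The delicate case is when $V_0 \cap X$ is a smooth conic $C$ over $K$ with no $K$-rational point: one then invokes Lemma~\ref{lemma} to find a quadratic extension $L/k$ such that $C(KL) \neq \varnothing$, producing a singular $KL$-point $P$ of $X$. Combining $P$ with the cone structure of $Q_0$ (lines of the cone through $P$ meeting the vertex, or the Fano scheme of lines on $X$ through a singular point) one constructs a $K$-line on $X$. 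In each case Galois conjugation supplies the matching second line in $V_0' \cap X$.

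Once a pair of Galois-conjugate lines on $X$ is available, the smooth Hasse principle for $X$ follows from the theorem of \cite{CSS}. The main obstacle is the smooth-pointless-conic case: upgrading an auxiliary $KL$-singular point of $X$, coming only from Lemma~\ref{lemma}, into a Galois-equivariant pair of $K$-rational lines on $X$ requires a careful use of the cone geometry of $Q_0$ and the orbit structure under $\mathrm{Gal}(KL/k)$.
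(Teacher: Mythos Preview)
Your overall architecture is right—reduce to a quadratic extension $K/k$, then feed a Galois-conjugate pair of lines into \cite[Theorem~13.2]{CSS}—but the construction of those lines in the ``delicate case'' has a genuine gap, and the target field is wrong.

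You look for a $K$-line inside $V_0\cap X$ alone. When $\rk(Q_0)=5$ this intersection is a conic $C\subset V_0\simeq\bbP^2_K$; if $C$ is smooth it contains no lines at all, and a $KL$-point $P\in C$ does not produce a $K$-line on $X$ via the cone structure of $Q_0$. Indeed, the only $K$-rational line in $V_0$ determined by $P$ is the secant $\langle P,P^{\tau}\rangle$ (with $\tau$ generating $\Gal(L/k)$), which lies in $V_0$ but not in $X$; and lines on $X$ through $P$ but outside $V_0$ are only defined over $KL$, since $\tau$ moves $P$. The phrase ``cone structure of $Q_0$'' cannot repair this: you need the cone structure of \emph{both} $Q_0$ and its conjugate $Q_0'$ simultaneously.

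The key geometric observation you are missing is that for any $P_1\in V_0\cap X$ and $P_2\in V_0'\cap X$ (the \emph{conjugate} vertex), the line $\langle P_1,P_2\rangle$ lies in $X$: it lies in $\{Q_0=0\}$ because $P_1$ is a vertex of $Q_0$ and $P_2\in Q_0$, and symmetrically in $\{Q_0'=0\}$. Using this, take $P\in C(KL)$ and form $\ell_1=\langle P,P^{\sigma}\rangle$ and $\ell_2=\langle P^{\tau},P^{\tau\sigma}\rangle$, where $\sigma$ generates $\Gal(K/k)$. These lines are contained in $X$, are defined over $L$ (not $K$), and are conjugate by $\tau$; if they meet, the intersection is a $k$-point and Lemma~\ref{lemma0}(vi) finishes, otherwise \cite[Theorem~13.2]{CSS} applies. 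Note also that your ``mild descent argument'' for $\dim V_0\geqslant 3$ is not needed once you have this cross-line construction: one simply slices $V_0\cap X$ by a $K$-plane to obtain a $K$-conic and runs the same argument.
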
 
Soit $X$ définie par des équations $F=G=0$. Si le polynôme caractéristique $P(\lambda,\mu)=\det(\lambda F+\mu G)$ est identiquement nul, $X$ est birationnelle à une quadrique géométriquement intègre dans $\bbP^6_k$ par le Lemme \ref{lemma0}(i),(ii).
Soit $(\lambda,\mu)\in\bbP^1(\bar k)$ tel que $\rk(\lambda F +\mu G) \leqslant 5$.  D'après le Lemme \ref{lemma0}(iii) le point $(\lambda,\mu)$ est une racine du polynôme $P$ d'ordre $\geqslant 3$. Si $(\lambda,\mu)\in\bbP^1(k)$, on conclut par le Lemme \ref{lemma0}(v). Sinon, $(\lambda,\mu)\in\bbP^1(K)$ où $K/k$ est une extension quadratique. Alors $P$ a deux racines d'ordre~$\geqslant 3$ qui sont conjuguées par $\Gal(K/k)$. Il suffit de démontrer l'affirmation suivante:

\begin{proposition}
Soit $X\subset \bbP^n_k,\,(n\geqslant 5)$ une intersection complète géométriquement intègre non~conique de deux quadriques sur un corps de nombres $k$. Supposons que $X$ est donnée par deux formes quadratiques $Q_1,\,Q_2$ sur une extension quadratique $K/k$ telles que $Q_1,\,Q_2$ sont conjuguées par $\Gal(K/k)$ et $\rk(Q_1)=\rk(Q_2)\leqslant n-2$. Alors le~principe de Hasse lisse vaut pour~$X$.
\end{proposition}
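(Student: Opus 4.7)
L'idée est de construire à l'intérieur de $X$ une paire $k$-rationnelle de droites $\Gal(K/k)$-conjuguées, puis d'invoquer le résultat de \cite{CSS} qui établit le principe de Hasse lisse pour les intersections géométriquement intègres non coniques de deux quadriques dans $\bbP^n_k$ ($n \geqslant 5$) contenant une telle paire.

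Soit $L_i := \bbP(\ker Q_i) \subset \bbP^n_K$ le lieu singulier de $Q_i$ : c'est un sous-espace linéaire de dimension $d := n - r \geqslant 2$ contenu dans $Q_i$, et $L_1, L_2$ sont échangés par $\Gal(K/k)$. Comme $\nabla Q_i$ s'annule sur $L_i$, la jacobienne du couple $(Q_1, Q_2)$ a rang au plus $1$ le long de $L_i$, donc $X \cap L_i \subset X_{\sing}$. On sépare alors en deux cas : si $L_1 \cap L_2 \neq \varnothing$, cette intersection est une sous-variété linéaire non vide définie sur $k$ et contenue dans $X$, elle admet donc un $k$-point, et le Lemme~\ref{lemma0}(vi) permet de conclure ; sinon $L_1 \cap L_2 = \varnothing$, et l'on étudie la quadrique restreinte $R := Q_2|_{L_1}$ sur $L_1 \simeq \bbP^d_K$. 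Si $R \equiv 0$, alors $L_1 \subset X$ et toute $K$-droite $\ell_1 \subset L_1$ avec sa conjuguée $\ell_2 \subset L_2$ fournit la paire disjointe souhaitée (elles sont bien disjointes puisque $L_1 \cap L_2 = \varnothing$). Si $R \not\equiv 0$ est dégénérée, le lieu singulier de $R$ est un sous-espace linéaire non trivial sur $K$ contenu dans $R \subset X \cap L_1$, fournissant encore une paire de $K$-droites conjuguées dans $X$.

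L'obstacle technique principal est le cas restant où $R$ est lisse, en particulier lorsque $d = 2$ (ce qui équivaut à $r = n - 2$) et $R$ est une $K$-conique lisse sans $K$-points : aucune $K$-droite dans $X \cap L_1$ n'est alors directement disponible. Pour contourner cette difficulté, on pourrait appliquer le Lemme~\ref{lemma} afin de trouver une extension quadratique $L/k$ telle que $R(KL) \neq \varnothing$, produisant un ensemble fini Galois-stable de points de $X_{\sing}$ de degré $\leqslant 4$ sur $k$, dont il faut descendre une configuration Galois-stable exploitable --- soit une paire de points singuliers conjugués à laquelle s'applique le résultat parallèle de \cite{CSS} sur les intersections contenant une paire de points singuliers conjugués, soit une configuration plus fine se ramenant au cas déjà traité. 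Rendre cette descente uniforme dans tous les sous-cas et vérifier qu'elle fournit exactement la configuration requise par l'énoncé de \cite{CSS} constitue la principale difficulté attendue.
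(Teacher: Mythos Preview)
Your overall strategy --- produce a $\Gal$-conjugate pair of skew lines in $X$ and invoke \cite[Theorem 13.2]{CSS} --- is exactly the paper's, but you are missing the one geometric observation that makes it go through: for any $P_1 \in Y_1 := L_1 \cap \{Q_2=0\}$ and $P_2 \in Y_2 := L_2 \cap \{Q_1=0\}$, the line $\langle P_1, P_2 \rangle$ lies in $X$, because $\langle L_1, P_2 \rangle \subset \{Q_1=0\}$ (a cone with vertex $L_1$) and symmetrically. With this in hand, your degenerate-$R$ case collapses immediately: a $K$-point $P \in Y_1$ together with its conjugate $P^\sigma \in Y_2$ yields a $k$-rational line $\langle P, P^\sigma \rangle \subset X$, hence a $k$-point and Lemme~\ref{lemma0}(vi). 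Your own attempt to extract a $K$-line \emph{inside} $L_1$ fails when $\rk(R)=d$, since the singular locus of $R$ is then a single point; so the claim ``fournissant encore une paire de $K$-droites conjuguées'' is not justified.

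The same observation resolves the hard case you leave open. When $Y_1$ is a smooth quadric in $\bbP^d_K$ without $K$-points, pick any $K$-conic $C_1 \subset Y_1$ (for $d=2$ take $Y_1$ itself, otherwise slice by a $K$-plane) and apply Lemme~\ref{lemma} to get $L/k$ quadratic with $C_1(KL)\neq\varnothing$; write $\Gal(KL/k)=\langle\sigma\rangle\times\langle\tau\rangle$. For $P\in C_1(KL)$ the lines $\ell_1=\langle P,P^\sigma\rangle$ and $\ell_2=\langle P^\tau,P^{\tau\sigma}\rangle$ lie in $X$ by the observation above; they are defined over $L$ (not $K$!) and are $\tau$-conjugate. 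If they meet, the intersection is a $k$-point; if skew, \cite[Theorem 13.2]{CSS} applies, after checking for $n=5$ that one is not in the exceptional case $(\mathrm{E}_5)$ --- a verification you also omit. Your alternative plan of descending a Galois-stable set of $\leqslant 4$ singular points is, as you admit, not worked out, and it is unnecessary once the line construction between $Y_1$ and $Y_2$ is available.
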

\begin{proof}
On note $X_i\subset \bbP^n_K$ la quadrique définie par $Q_i$. Soit $m=n-\rk(Q_1)$ et soit $\Pi_i=\bbP\big(\ker(Q_i)\big)\subset \bbP^n_K$ le sous-espace projectif des points singuliers de $X_i$. On a $\dim(\Pi_i)=m\geqslant 2$. Si l'intersection $\Pi_1\cap \Pi_2$ est non vide, $X$ est conique. Donc $\Pi_1\cap \Pi_2=\varnothing$. 
        
On considère les intersections $\Pi_1\cap X_2=Y_1,\, \Pi_2\cap X_1=Y_2$ dans $\bbP^n_{K}$. Étant donnés des points géométriques $P_i\in Y_i$ la droite $\langle P_1, P_2\rangle$ est contenue dans $X$ car $\langle \Pi_1, P_2 \rangle \subset X_1$, $\langle \Pi_2, P_1\rangle \subset X_2$. Si la quadrique $Y_1\subset\Pi_1$ est singulière, elle contient un $K$-point $P$. Alors il y a le point conjugué $P^{\sigma}\in Y_2$ où $\sigma$ est le générateur de $\Gal(K/k)$. La droite $\langle P, P^{\sigma}\rangle$ est donc contenue dans $X$ et définie sur $k$. En particulier, $X$ possède un $k$-point et on peut utiliser le Lemme \ref{lemma0}(vi). 
        
On peut donc supposer que les quadriques $Y_i\subset \Pi_i\simeq \bbP^m_K$ sont lisses et sans \linebreak $K$-points. Dans ce cas, il existe une $K$-conique $C_1\subset Y_1$, notons $C_2=C_1^\sigma\subset Y_2$ la conique conjuguée dans $Y_2$. Le Lemme \ref{lemma} assure qu'il existe une extension quadratique $L\supset k$ telle que $C_1(F)\neq \varnothing$ pour $F=KL$. Soit $\tau$ le générateur de $\Gal(L/k)$. On a un isomorphisme $\Gal(F/k)=\Gal(K/k)\times \Gal(L/k)$. On choisit une paire de $F$-points conjugués $P,\,P^{\tau}$ sur $C_1$. On a également des points $P^{\sigma},\,P^{\tau\sigma}$
sur $C_2$. Considérons les droites $\ell_1=\langle P,P^\sigma\rangle,\, \ell_2=\langle P^\tau,P^{\tau\sigma}\rangle$ contenues dans $X$. On peut supposer que ces droites sont gauches, sinon elles se croisent en un $k$-point et on applique le Lemme~\ref{lemma0}(vi). Puisque les droites $\ell_1,\,\ell_2$ sont définies sur $L$ et conjuguées par $\tau$ on peut conclure par le~Théorème~13.2 de \cite{CSS} dans le cas où $n\geqslant 6$. Si $n=5$, il faut vérifier que le cas exceptionnel $\mathrm{(E_5)}$ n'est pas possible. Effectivement, s'il y a deux formes non proportionnelles de rang 4 dans le pinceau $\lambda F+\mu G$, alors le polynôme $P$ possède deux autres racines d'ordre $\geqslant 2$ dans $\bar k$ par le Lemme \ref{lemma0}(iii) et on obtient une contradiction avec le~fait que $\deg(P)\leqslant 8$. \end{proof}

\subsection*{Deux formes conjuguées de rang~6}

On considère le cas où le pinceau géométrique de $X$ contient deux formes quadratiques conjuguées de rang 6.
Étant donné une telle $X$, on applique la méthode des fibrations pour les zéro-cycles afin de  réduire le problème aux intersections singulières de deux quadriques dans $\bbP^4$ pour lesquelles le résultat est connu.

Dans le lemme technique qui suit on appelle \textit{quadrilatère gauche} un quadrilatère dans $\bbP^3$ formé par quatre sommets distincts qui ne sont pas contenus dans un plan. 
\begin{lemma}\label{lemme18}
Soit $k$ un corps algébriquement clos de caractéristique 0. Soit $X\subset \bbP^7_k$ une intersection complète intègre de deux quadriques donnée par des équations $F=G=0$ où $\rk(F)=\rk(G)=6$ et le pinceau de $X$ est non dégénéré. Supposons que les sommets des quadriques $F=0,\, G=0$ engendrent un sous-espace projectif $\Pi \subset \bbP^7_k$ de dimension 3 et que $X\cap \Pi$ est un  quadrilatère gauche. Soit $H\simeq\bbP^3_k$ la variété qui paramètre les sous-espaces $\Gamma$ de dimension~4 contenant $\Pi$. Alors il existe un ouvert $U\subset H$ tel que pour tout $\Gamma\in U(k)$ l'intersection $X\cap \Gamma$ est complète intègre non~conique. 
\end{lemma}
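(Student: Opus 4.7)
\noindent \textit{Plan.} Je fixe une base de $V = k^8$ adaptée: $y_0, y_1$ pour $V_F := \ker F$, $x_0, x_1$ pour $V_G := \ker G$ (l'hypothèse $\dim \Pi = 3$ entraîne $L_F \cap L_G = \varnothing$, donc $V_\Pi := V_F \oplus V_G$ est l'espace vectoriel sous-jacent à $\Pi$), et $w_1, \ldots, w_4$ pour compléter. Ainsi $F$ ne fait intervenir que $x_0, x_1, w_1, \ldots, w_4$ et $G$ que $y_0, y_1, w_1, \ldots, w_4$. Un élément $\Gamma \in H = \bbP^3$ est donné par une direction $[\alpha_1 : \cdots : \alpha_4]$, et $\Gamma$ s'identifie à $\bbP^4$ de coordonnées $x_0, x_1, y_0, y_1, t$ en posant $w_i = \alpha_i t$. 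Les restrictions $F|_\Gamma$ et $G|_\Gamma$ deviennent alors des formes quadratiques en $(x_0, x_1, t)$ et $(y_0, y_1, t)$ respectivement.

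Je considère trois conditions ouvertes sur $\Gamma \in H$, dont l'intersection est un ouvert non vide $U$: (a) $\rk F|_\Gamma = \rk G|_\Gamma = 3$, de sorte que $\{F|_\Gamma = 0\}$ et $\{G|_\Gamma = 0\}$ sont des cônes dans $\bbP^4$ sur des coniques planes lisses, de sommets distincts $L_F$ et $L_G$; (b) le pinceau de $X \cap \Gamma$ contient une forme de rang $5$ sur $\bbP^4$; (c) les binaires $F(x_0, x_1, 0)$ et $G(y_0, y_1, 0)$ ont chacune deux racines distinctes. La condition (c) est automatique: l'hypothèse que $X \cap \Pi$ est un quadrilatère gauche entraîne que $F|_\Pi$ (resp.\ $G|_\Pi$) est de rang $2$, ce qui force $F|_{V_G}$ (resp.\ $G|_{V_F}$) à être non dégénérée. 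Pour (a), on se ramène à demander que la composante de $\zeta := \sum_i \alpha_i w_i$ dans le supplémentaire $F$-orthogonal de $V_G$ dans $V/V_F$ soit non isotrope, condition ouverte non vide grâce à la non-dégénérescence de $F$ sur $V/V_F$. Pour (b), on choisit $(\lambda_0, \mu_0)$ avec $\lambda_0 \mu_0 \neq 0$ et $\lambda_0 F + \mu_0 G$ de rang $8$ sur $V$; la restriction à $V_\Pi$ est alors $\mu_0 G|_{V_F} \oplus \lambda_0 F|_{V_G}$, de rang maximal $4$ grâce à (c); un calcul via le complément de Schur montre que la restriction à $V_\Gamma = V_\Pi + \langle \zeta \rangle$ est de rang $5$ pour $\zeta$ générique.

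Sous (a) et (b), $F|_\Gamma$ et $G|_\Gamma$ sont deux quadriques irréductibles distinctes dans $\bbP^4$, donc $X \cap \Gamma$ est une intersection complète pure de dimension $2$, Cohen--Macaulay. La non-conicité résulte du Lemme~\ref{lemma0}(iv) appliqué au pinceau non dégénéré. Pour l'intégralité, je travaille dans la carte affine $\{t \neq 0\}$ en posant $t = 1$: les équations deviennent $F(x_0, x_1, 1) = 0$ et $G(y_0, y_1, 1) = 0$, qui sont découplées et définissent des coniques affines lisses $E_F \subset \bbA^2_{x_0, x_1}$ et $E_G \subset \bbA^2_{y_0, y_1}$. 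Grâce à (c), $E_F$ et $E_G$ admettent chacune deux points distincts à l'infini, donc sont isomorphes à $\bbG_m$ sur le corps algébriquement clos $k$. Ainsi $X \cap \Gamma \cap \{t \neq 0\} \simeq E_F \times E_G \simeq \bbG_m \times \bbG_m$ est lisse et irréductible de dimension $2$. Comme $X \cap \Pi = X \cap \Gamma \cap \{t = 0\}$ est de dimension $1$ et $X \cap \Gamma$ pur de dimension $2$, on conclut que $X \cap \Gamma$ est l'adhérence d'un ouvert intègre, donc intègre.

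L'obstacle principal est la condition (b): la restriction du pinceau à $V_\Gamma$ pourrait en principe rester dégénérée à cause du sous-espace fixé $V_\Pi \subset V_\Gamma$ où chaque forme individuelle $F, G$ perd beaucoup de rang. L'observation décisive est que l'hypothèse $L_F \cap L_G = \varnothing$ (conséquence de $\dim \Pi = 3$ combinée à la structure du quadrilatère) rend les formes $F|_{V_G}$ et $G|_{V_F}$ non dégénérées, garantissant un rang maximal $4$ sur $V_\Pi$ qui se propage à $V_\Gamma$ générique via le complément de Schur.
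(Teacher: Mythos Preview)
Your proof is correct and follows the same overall structure as the paper's --- establish the rank conditions $\rk(F|_\Gamma)=\rk(G|_\Gamma)=3$ and $\rk(\Psi|_\Gamma)=5$ for a generic $\Gamma$, then deduce completeness, integrality, and non-conicality --- but your route to integrality is genuinely different. The paper obtains these rank conditions by first showing that the pencil of $X\cap\Pi$ is non-degenerate with exactly two singular members $F|_\Pi,G|_\Pi$ of rank~$2$, then applies the rank-lifting Lemme~\ref{lemme17}(ii) three times, and finally invokes \cite[Lemma~1.11]{CSS} as a black box for integrality. You instead work in explicit coordinates adapted to the decomposition $V_\Pi=V_F\oplus V_G$, which makes the key structural fact transparent: since $F|_\Gamma$ involves only $(x_0,x_1,t)$ and $G|_\Gamma$ only $(y_0,y_1,t)$, the affine chart $\{t\neq 0\}$ of $X\cap\Gamma$ is a \emph{product} $E_F\times E_G$ of two smooth affine conics, each isomorphic to $\bbG_m$ over the algebraically closed~$k$. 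This yields integrality directly (an irreducible open piece whose complement has lower dimension, combined with Cohen--Macaulayness for reducedness), without appealing to~\cite{CSS}. Your Schur-complement argument for condition~(b) is essentially a coordinate realisation of the paper's appeal to Lemme~\ref{lemme17}(ii). The trade-off: your argument is more self-contained and exposes the toric structure of the generic fibre (foreshadowing the fibration used in Proposition~\ref{cas4formes}), while the paper's is shorter.
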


\begin{proof}
Le pinceau de $X\cap \Pi$ contient au moins deux formes de rang $\leqslant 2$ qui sont les restrictions de $F,G$. Comme le quadrilatère $X\cap \Pi$ n'est pas contenu dans un plan, le rang de toute forme dans le pinceau de $X\cap \Pi$ est au moins 2. Il est facile de voir que le quadrilatère n'est pas contenu dans une quadrique de rang 3 et qu'il est contenu dans un nombre fini de quadriques de rang 2. On conclut que le pinceau de $X\cap \Pi$ est non dégénéré et contient précisément deux quadriques singulières de rang 2 qui sont les restrictions de $F,G$.

Choisissons une forme quadratique $\Psi$ non dégénérée dans le pinceau de $X$. Par~le~Lemme~\ref{lemme17}(ii), on peut trouver un ouvert $U\subset H$ tel que pour tout $\Gamma\in U(k)$ on a $\rk(F|_{\Gamma})= 3$, $\rk(G|_{\Gamma})= 3$, $\rk(\Psi|_{\Gamma})= 5$. Effectivement, ces trois conditions sont ouvertes et le Lemme \ref{lemme17}(ii) assure que les lieux correspondants sont non vides. Soit $\Gamma\in U(k)$. D'après \cite[Lemma 1.11]{CSS} l'intersection $X\cap \Gamma$ est complète et intègre. Elle est non conique car le pinceau de $X\cap \Gamma$ contient la forme $\Psi|_{\Gamma}$ non dégénérée, voir le Lemme \ref{lemma0}(iv). 
\end{proof}

\begin{proposition}\label{cas2conj}
Soit $k$ un corps de nombres. Soit $X\subset \bbP^7_k$ une intersection complète géométriquement intègre de deux quadriques. Supposons que le~pinceau géométrique de $X$ est non dégénéré et contient deux formes $F,G$ de rang 6 distinctes dans une extension quadratique $K/k$ qui sont conjuguées par $\Gal(K/k)$. Alors le principe de Hasse lisse vaut pour $X$. 
\end{proposition}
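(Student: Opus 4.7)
The plan is to apply the Harpaz--Wittenberg fibration theorem for zero-cycles (Theorem \ref{prop15}) to a fibration $\tilde X \to \bbP^3_k$ whose closed fibres are smooth proper models of singular complete intersections of two quadrics in $\bbP^4$ --- a case already covered by Proposition \ref{prop14} and Liang's Theorem \ref{TheoremLiang}. The smooth Hasse principle will then follow from Proposition \ref{core}.

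Denote by $\Pi_F,\Pi_G\subset\bbP^7_K$ the vertex lines of $F$ and $G$; they are exchanged by $\Gal(K/k)$. Since the geometric pencil of $X$ is non-degenerate, any point of $\Pi_F\cap\Pi_G$ would be singular on every form of the pencil, contradicting the existence of a smooth form in it. Hence $\Pi_F\cap\Pi_G=\varnothing$, and $\Pi:=\langle\Pi_F,\Pi_G\rangle$ is a $3$-plane, Galois-stable and therefore defined over~$k$. A short computation shows that $F|_\Pi$ vanishes identically if and only if $\Pi_G\subset\{F=0\}$; in that degenerate case, by Galois conjugation both $\Pi_F$ and $\Pi_G$ lie on $X$, so $X$ contains a pair of Galois-conjugate lines and the smooth Hasse principle follows from the results of \cite{CSS}. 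Otherwise both $F|_\Pi$ and $G|_\Pi$ have rank exactly $2$ (rank $1$ is ruled out because $\Pi_F\cap\Pi_G=\varnothing$), and a direct check using the disjointness of $\Pi_F$ and $\Pi_G$ shows that $X\cap\Pi$ is a skew quadrilateral defined over $k$. We are then in the setting of Lemma \ref{lemme18}.

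Now let $H\simeq\bbP^3_k$ parametrize the $4$-dimensional subspaces $\Gamma\supset\Pi$ of $\bbP^7_k$. Linear projection from $\Pi$ yields a dominant rational map $X\dashrightarrow H$; starting from a smooth proper model $\hat X$ of $X$ and resolving the indeterminacy, we obtain a smooth proper $k$-variety $\tilde X$ birational to $X$ together with a morphism $f\colon\tilde X\to H$. By Lemma \ref{lemme18} applied over $\bar k$, after possibly shrinking $H$ to a non-empty open $U\subset H$ defined over $k$, we may assume that for every closed point $p\in U$ the fibre $f^{-1}(p)$ is a smooth proper model of the integral non-conic complete intersection $X\cap\Gamma_p\subset\Gamma_p\simeq\bbP^4_{k(p)}$, which is singular because it contains the $k$-quadrilateral $X\cap\Pi$. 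The generic fibre of $f$, birational to such a variety, is geometrically rational by Lemma \ref{lemma0}(ii) and in particular rationally connected.

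For every closed point $p\in U$ and every finite extension $L/k(p)$, Proposition \ref{prop14} gives $\mathrm{(Br\text{-}AF)}$ for a smooth proper model of $(X\cap\Gamma_p)_L$; Liang's Theorem \ref{TheoremLiang} then yields conjecture $\mathrm{(E)}$ for $f^{-1}(p)$ over $k(p)$. Theorem \ref{prop15} thus applies and gives $\mathrm{(E)}$ for $\tilde X$ over $k$; the same construction after base change gives $\mathrm{(E)}$ for $\tilde X_L$ for every finite extension $L/k$. Since $n=7\geqslant 5$, Proposition \ref{core} translates this into the smooth Hasse principle for $X$. The main technical obstacle will be the verification that $f\colon\tilde X\to H$ has the properties required by Theorem \ref{prop15} over a suitable $k$-defined open $U$ --- in particular that its closed fibres over $U$ are smooth proper models of geometrically integral, singular, non-conic complete intersections of two quadrics in $\bbP^4$ --- which combines Lemma \ref{lemme18} with standard generic-flatness and generic-smoothness arguments.
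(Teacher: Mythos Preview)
Your approach is essentially identical to the paper's: project from the Galois-stable $3$-plane $\Pi$ spanned by the two vertex lines, apply Lemma~\ref{lemme18} to control the generic fibres as singular non-conic intersections of two quadrics in $\bbP^4$, feed Proposition~\ref{prop14} through Liang's Theorem~\ref{TheoremLiang} into the Harpaz--Wittenberg fibration theorem, and conclude via Proposition~\ref{core}.

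There is, however, a small but genuine error in your preliminary case analysis. You assert that $\rk(F|_\Pi)=1$ is ruled out by $\Pi_F\cap\Pi_G=\varnothing$. This is false: since $\Pi_F\subset\ker F$, one has $\rk(F|_\Pi)=\rk(F|_{\Pi_G})$, and the restriction of $F$ to the line $\Pi_G$ can perfectly well be a non-zero square (rank~$1$) without forcing any intersection of $\Pi_F$ and $\Pi_G$. In that situation $X\cap\Pi_F$ and $X\cap\Pi_G$ each reduce to a single (doubled) point, and $X\cap\Pi$ is not a skew quadrilateral, so Lemma~\ref{lemme18} does not apply. The paper disposes of this case directly: if $P$ is the unique point of $X\cap\Pi_F$ and $S=\sigma(P)$ its conjugate on $\Pi_G$, then the line $\langle P,S\rangle$ lies in $X$ (cone argument from the vertices) and is Galois-stable, hence defined over $k$; one then concludes by Lemma~\ref{lemma0}(vi). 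Once you insert this missing rank-$1$ case, your argument is complete and matches the paper's.
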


\begin{proof}
On~note $L_F, L_G$ les droites singulières des quadriques $F=0, G=0$. L'intersection $L_F\cap L_G$ est vide puisque $X$ est non conique par le Lemme \ref{lemma0}(iv). Ainsi, $L_F, L_G$ engendrent un sous-espace $\Pi\subset \bbP^7_k$ de dimension~3. Comme dans la preuve de la Proposition \ref{cas5} on note que pour tous points $P\in X\cap L_F$, $Q\in X\cap L_G$ la droite $PQ$ est contenue dans $X$. Donc, si $X$ contient les droites $L_F,L_G$, alors $X$ contient l'espace $\Pi\simeq \bbP^3_k$ qu'elles engendrent et on applique le Lemme~\ref{lemma0}(vi). Sinon, soit $X\cap L_F=\{P_1,P_2\}, X\cap L_G=\{S_1,S_2\}$ où les points d'intersection peuvent coïncider. S'ils coïncident on a $P_1=P_2$, $S_1=S_2$ et $X$ contient la $k$-droite $P_1S_1$, donc, on peut encore conclure par le Lemme~\ref{lemma0}(vi). Maintenant on suppose que les quatre points $P_1,S_1,P_2,S_2$ sont distincts. Alors $X\cap\Pi$ est le quadrilatère gauche formé par ces points. 

Soit $H\simeq\bbP^3_k$ la variété qui paramètre les sous-espaces $\Gamma$ de dimension~4 contenant $\Pi$. On a une projection $\bbP^7_k\backslash \Pi\to H$. Soit $\pi\colon X\backslash \Pi \to H$ la restriction de cette projection. Étant donné $\Gamma\in H(\bar k)$, la~fibre $\pi^{-1}(\Gamma)$ s'identifie avec l'intersection $(X\backslash \Pi)\cap \Gamma$. 
En éclatant $X$ et en résolvant les singularités (Hironaka), on trouve un modèle propre lisse $\hat X\to X$ tel qu'il existe un prolongement $\hat \pi\colon \hat X\to H$ de l'application rationnelle $\pi$. Clairement, il existe un ouvert $U\subset H$ tel que pour tout $\Gamma\in U(\bar k)$ la fibre $\hat \pi^{-1}(\Gamma)$ est un modèle propre lisse de la fibre $\pi^{-1}(\Gamma)$ qui contient 4 points singuliers $P_1,P_2,S_1,S_2$. Par le Lemme \ref{lemme18}, on peut diminuer $U$ de manière que pour tout $\Gamma\in U(\bar k)$ la~fibre $\hat\pi^{-1}(\Gamma)$ soit un modèle propre lisse d'une intersection complète géométriquement intègre non conique de deux quadriques dans $\bbP^4$. 

Toute fibre géométrique au-dessus de $U$ est de dimension 2. Ceci assure que $\hat \pi$ est dominant. Par la Proposition~\ref{prop14}, la fibre $\hat\pi^{-1}(\Gamma)$ satisfait la conjecture \hyperref[(Br-AF)]{$\mathrm{(Br\text{-}AF)}$} pour tout $\Gamma\in U(k)$. D'après la Proposition \ref{core}, la conjecture~\hyperref[(E)]{(E)} est également satisfaite pour $\hat\pi^{-1}(\Gamma)$. La~fibre générique de $\hat \pi$ est un modèle propre lisse d'une intersection géométriquement intègre non conique de deux quadriques qui est géométriquement rationnelle, donc, rationnellement connexe. Ainsi, on peut appliquer le Théorème~\ref{prop15} pour conclure que la conjecture \hyperref[(E)]{(E)} vaut pour $\hat X$. Donc, $X$ satisfait le principe de Hasse lisse d'après la Proposition \ref{core}.
\end{proof}

\subsection*{Quatre formes de rang 6}

Quand le pinceau géométrique de $X$ contient 4 formes de rang 6, on construit un fibré de $X$ en espaces principaux homogènes de tores algébriques afin d'utiliser la méthode des fibrations pour les zéro-cycles comme dans le cas précédent.  
\begin{proposition}\label{propf}
Soit $k$ un corps algébriquement clos, $\car(k)\neq 2$. Soit $X\subset \bbP^7_k$ une intersection complète intègre de deux quadriques. Supposons que le pinceau de $X$ est non dégénéré et contient $4$ formes quadratiques $\{\Phi_i\}_{0<i\leqslant 4}$ de rang 6. Soit $\bbP^7_k=\bbP(V)$ où $V$ est un espace vectoriel de dimension 8. Alors 
\begin{itemize}
    \item[(i)] L'espace $V$ se décompose en somme directe $V=\bigoplus_i \ker(\Phi_i)$ et cette décomposition est orthogonale par rapport à toute forme dans le pinceau de $X$.
    \item[(ii)] L'intersection $X\cap \bbP(\ker(\Phi_i))$ est une paire de deux points. 
\end{itemize}
\end{proposition}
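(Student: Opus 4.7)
Le plan est d'associer au pinceau un endomorphisme auto-adjoint sur $V$ dont l'analyse spectrale fournira toute la décomposition souhaitée. Puisque le pinceau est non dégénéré, on fixe une forme non dégénérée $\Psi$ du pinceau et on pose $\Phi = \Phi_1$. L'identification $V \iso V^*$ induite par $\Psi$ permet de définir un endomorphisme $A = \Psi^{-1}\Phi$ de $V$, auto-adjoint pour la forme bilinéaire associée à $\Psi$: on vérifie directement l'égalité $\Psi(Ax, y) = \Phi(x, y) = \Psi(x, Ay)$ pour tous $x, y \in V$.

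L'étape clé consistera à démontrer la diagonalisabilité de $A$ par un comptage de multiplicités. On observera que $\chi_A(\lambda) = \det(\lambda I - A)$ est proportionnel à $\det(\lambda \Psi - \Phi)$, polynôme de degré $8$, dont les racines sont exactement les valeurs $\lambda_i$ telles que $\lambda_i \Psi - \Phi$ soit proportionnelle à $\Phi_i$. D'après le Lemme~\ref{lemma0}(iii), chaque $\lambda_i$ est une racine de multiplicité algébrique $\geqslant 8-6 = 2$; comme il y a quatre racines distinctes et que $4 \cdot 2 = 8 = \deg \chi_A$, la multiplicité algébrique de chaque $\lambda_i$ vaut exactement $2$. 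Or la multiplicité géométrique $\dim \ker(A - \lambda_i I) = \dim \ker(\Phi_i) = 2$ coïncide alors avec la multiplicité algébrique, d'où la diagonalisabilité de $A$ et la décomposition $V = \bigoplus_i \ker(\Phi_i)$ annoncée en (i).

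L'orthogonalité par rapport à toute forme du pinceau découlera de l'argument classique reposant sur l'auto-adjonction de $A$: pour $x \in \ker(\Phi_i)$ et $y \in \ker(\Phi_j)$ avec $i \neq j$, l'égalité $\lambda_i \Psi(x,y) = \Psi(Ax, y) = \Psi(x, Ay) = \lambda_j \Psi(x,y)$ entraîne $\Psi(x,y) = 0$, puis $\Phi(x,y) = \Psi(Ax, y) = \lambda_i \Psi(x,y) = 0$. Toute forme du pinceau étant combinaison linéaire de $\Phi$ et $\Psi$, l'orthogonalité pour l'ensemble du pinceau s'ensuivra.

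Pour (ii), on exploitera le fait que la relation $\Phi x = \lambda_i \Psi x$ valable pour $x \in \ker(\Phi_i)$ force toute forme $a\Phi + b\Psi$ du pinceau à se restreindre à $\ker(\Phi_i)$ comme le scalaire $(a\lambda_i + b)$ fois $\Psi|_{\ker(\Phi_i)}$. La décomposition étant $\Psi$-orthogonale, la relation $8 = \rk(\Psi) = \sum_i \rk(\Psi|_{\ker(\Phi_i)})$ jointe aux inégalités $\rk(\Psi|_{\ker(\Phi_i)}) \leqslant \dim \ker(\Phi_i) = 2$ forcera chaque restriction à être de rang exactement $2$. Ainsi, $X \cap \bbP(\ker(\Phi_i))$ sera le lieu des zéros d'une forme binaire non dégénérée sur la droite projective $\bbP(\ker(\Phi_i)) \simeq \bbP^1_k$, c'est-à-dire exactement deux points distincts sur le corps algébriquement clos $k$. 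Le point délicat de la démonstration est le comptage de multiplicités établissant la diagonalisabilité; une fois celle-ci acquise, tout le reste est routine.
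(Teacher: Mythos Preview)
La démonstration proposée est correcte et suit essentiellement la même stratégie que celle de l'article : analyse spectrale de l'endomorphisme auto-adjoint associé au pinceau, la décomposition en espaces propres donnant directement (i) et (ii). La seule différence est que l'article choisit deux générateurs non dégénérés $F,G$ du pinceau et obtient la diagonalisabilité par un simple comptage de dimensions d'espaces propres ($4\times 2=8$), rendant superflu le détour par le Lemme~\ref{lemma0}(iii) ; votre choix de $\Phi=\Phi_1$ et votre argument de multiplicités fonctionnent tout aussi bien.
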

\begin{proof}
Soit $X$ donnée par des équations $F=G=0$ où $\rk(F)=\rk(G)=8$ et soient $A,B$ les matrices des formes quadratiques $F,G$ respectivement. En choisissant une base orthogonale pour~$G$, on peut supposer que $B=\Id$. Soit $\Phi_i=A-\lambda_i B$ où $\lambda_i\in k$ sont distincts et non nuls. On note $V_i=\ker(\Phi_i)$. Alors les sous-espaces $V_i$ sont des espaces propres de la matrice $A$ avec les valeurs propres $\lambda_i$. Comme chaque $V_i$ est de dimension 2, la matrice $A$ est diagonalisable. On conclut que $V=\bigoplus_iV_i$ et $A|_{V_i}=\lambda_i\Id_{V_i}$. Pour (i) il reste à vérifier que cette décomposition est orthogonale par rapport à toute forme dans le pinceau. 

On note par $\langle\cdot \,,\cdot \rangle_F,\; \langle\cdot \,,\cdot \rangle_G$ les formes bilinéaires correspondantes à $F, G$. Comme la matrice $A$ est symétrique, elle définit un opérateur linéaire autoajoint par rapport à $G$. Donc, pour $x\in V_i, y\in V_j$ on a 
$$\langle x, y\rangle_G=\frac{1}{\lambda_i}\langle Ax,y\rangle_G=\frac{1}{\lambda_i}\langle x,Ay\rangle_G=\frac{\lambda_j}{\lambda_i}\langle x,y\rangle_G.$$
Ceci assure que $V_i$ est orthogonale à $V_j$  par rapport à $G$ pour $i\neq j$. Comme 
$$\langle x,y\rangle_F=x^tAy=\lambda_j x^ty=\lambda_j\langle x, y\rangle_G$$
on déduit que les $V_i$ sont aussi orthogonaux par rapport à $F$ et donc, par rapport à toute forme dans le pinceau de $X$. Pour démontrer (ii) il suffit de vérifier que la restriction de la forme $F$ à $V_i$ est non dégénérée. Effectivement, soit $x\in \ker(F|_{V_i})$, alors $x\in \ker(F)$ puisque $V_i$ est orthogonale à $V_j\; (j\neq i)$. Mais la forme $F$ est non dégénérée, donc $x=0$.
\end{proof}

\begin{proposition}\label{cas4formes}
Soit $k$ un corps de nombres. Soit $X\subset \bbP^7_k$ une intersection complète géométriquement intègre de deux quadriques. Supposons que le pinceau géométrique de $X$ est non dégénéré et contient $4$ formes quadratiques de rang 6. Alors le principe de Hasse lisse vaut pour $X$. 
\end{proposition}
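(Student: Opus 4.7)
Le plan est de construire une fibration de $\hat X$ au-dessus de $\bbP^1_k$ dont la fibre générique est, à birationalité près, un espace principal homogène sous un tore algébrique, puis d'appliquer la méthode des fibrations pour les zéro-cycles (Théorème \ref{prop15}) comme dans la Proposition \ref{cas2conj}. D'abord, $X$ n'est pas conique par le Lemme \ref{lemma0}(iv) puisque le pinceau est non dégénéré. J'appliquerais la Proposition \ref{propf} sur $\bar k$ pour obtenir la décomposition orthogonale $V_{\bar k}=\bigoplus_{i=1}^4 V_i$ avec $V_i=\ker(\Phi_i)$ de dimension 2 et l'écriture $F=\sum_i F_i$, $G=\sum_i \lambda_i F_i$ où $F_i$ est une forme quadratique binaire non dégénérée sur $V_i$, et telle que $X\cap \bbP(V_i)$ soit une paire de points. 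Les formes $F_i$ sont permutées par $\Gal(\bar k/k)$ et correspondent aux 4 points d'un $k$-schéma étale $T\subset \bbP^1$ de degré 4.

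Puis, je construirais une application rationnelle $\pi\colon X\dashrightarrow \bbP^1_k$ définie sur $k$. Soit $W\subset \Sym^2 V^*_{\bar k}$ le sous-espace de dimension 4 engendré par $\{F_1,F_2,F_3,F_4\}$: comme l'ensemble $\{F_i\}$ est stable par Galois, $W$ descend en un $k$-sous-espace contenant $\langle F,G\rangle$, et le quotient $W/\langle F,G\rangle$ est un $k$-espace vectoriel de dimension 2. Géométriquement, $\pi$ envoie $x=\sum x_i$ sur le point $[F_1(x_1):F_2(x_2):F_3(x_3):F_4(x_4)]\in \bbP(W/\langle F,G\rangle)\simeq \bbP^1_k$, ce qui est bien défini sur $k$ par descente galoisienne, hors du lieu d'annulation simultanée. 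Après Hironaka, on obtient un modèle propre lisse $\hat X\to X$ tel que $\pi$ s'étend en un morphisme $\hat\pi\colon \hat X\to \bbP^1_k$, clairement dominant.

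Il faudra ensuite analyser les fibres de $\hat\pi$ sur un ouvert dense $U\subset \bbP^1_k$. Sur $\bar k$, la fibre au-dessus d'un point $[y_1:y_2:y_3:y_4]$ avec tous les $y_i\neq 0$ s'identifie au quotient $\big(\prod_{i=1}^4 \{F_i(x_i)=y_i\}\big)/\mu_2$ par l'involution simultanée $x\mapsto -x$. Chaque facteur est une conique affine lisse, espace principal homogène sous un tore de dimension 1; le produit, modulo $\mu_2$ diagonal, est un espace principal homogène sous un tore de dimension 4. Par conséquent, pour tout point fermé $p\in U$, la fibre $\hat\pi^{-1}(p)$ est un modèle propre lisse sur $k(p)$ d'un espace principal homogène sous un $k(p)$-tore algébrique. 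D'après un théorème de Sansuc \cite{Sn}, une telle variété satisfait \hyperref[(Br-AF)]{$\mathrm{(Br\text{-}AF)}$} sur chaque extension finie de $k(p)$; étant géométriquement rationnelle, donc rationnellement connexe, elle satisfait la conjecture \hyperref[(E)]{(E)} par le Théorème \ref{TheoremLiang} (Liang).

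Enfin, la fibre générique de $\hat\pi$ étant également géométriquement rationnelle, donc rationnellement connexe, le Théorème \ref{prop15} (Harpaz-Wittenberg) donne que $\hat X$ satisfait la conjecture \hyperref[(E)]{(E)}, et le principe de Hasse lisse pour $X$ s'ensuit par la Proposition \ref{core}. Le point le plus délicat sera la justification rigoureuse de la descente galoisienne de la construction de $\pi$, ainsi que l'identification précise des fibres sur un ouvert dense comme modèles propres lisses d'espaces principaux homogènes sous des tores algébriques sur les corps résiduels correspondants, afin de pouvoir appliquer directement les résultats de Sansuc et de Liang.
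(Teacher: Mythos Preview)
Your proposal is correct and follows essentially the same approach as the paper: descend the two-dimensional linear system spanned by the Galois-stable set $\{F_i\}$ to obtain a fibration over $\bbP^1_k$, identify the fibres over a dense open geometrically as tori, then apply Sansuc, Liang, and Harpaz--Wittenberg via Proposition~\ref{core}. For the step you flag as delicate---that a fibre geometrically isomorphic to $(\bbG_m)^4$ is a principal homogeneous space under a $k(p)$-torus---the paper simply invokes \cite[p.~131, Lemme]{CTSk}, which settles it directly without any explicit descent computation on your product-modulo-$\mu_2$ description.
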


\begin{proof}
Soient $\{\Phi_i\}_{0<i\leqslant 4}$ les formes de rang 4 dans le pinceau de $X$ et soit $K/k$ une extension finie de Galois sur laquelle elles sont définies. On note par $L_i=\bbP(\ker(\Phi_i))$ la droite des sommets de la quadrique $\Phi_i=0$. On choisit deux $k$-formes non dégénérées $F,G$ dans le pinceau de~$X$. Soit $\varphi_i=F|_{L_i}$. Alors on a $$F=\varphi_1+\varphi_2+\varphi_3+\varphi_4$$ où les formes $\varphi_i$ sont de rang 2 par la Proposition \ref{propf}. Le groupe de Galois $\Gal(K/k)$ agit sur les $\varphi_i$ par permutation puisque la forme $F$ est définie sur $k$. Comme la restriction de toute forme dans le pinceau à $\ker(\Phi_i)$ est proportionnelle à $\varphi_i$ on a $G|_{L_i}=\alpha_i\varphi_i$ où les $\alpha_i\in K^*$ sont tous distincts. Donc $X_K$ est définie par les équations 
\begin{equation*}
   X_K\colon \begin{cases}
       \varphi_1+\varphi_2+\varphi_3+\varphi_4=0\\
       \alpha_1\varphi_1+
       \alpha_2\varphi_2+\alpha_3\varphi_3+\alpha_4\varphi_4=0
   \end{cases} 
\end{equation*}

On considère l'espace vectoriel engendré par les $\varphi_i$

$$V=\langle\varphi_1,\varphi_2,\varphi_3,\varphi_4\rangle \subset H^0\big(X, \mathcal{O}(2)\big)\otimes_k K.$$
L'espace $V$ est de dimension 2 et on peut le descendre à $k$ car l'ensemble des générateurs $\varphi_i$ est Galois-stable \cite[Chap. 5 \S10.4]{Br}. Ainsi il existe un sous-espace $V_k \subset H^0\big(X, \mathcal{O}(2)\big)$ de dimension 2 tel que $V_k\otimes_k K= V$. Soit $U\subset X$ l'ouvert de $X$ où aucune des formes $\varphi_i$ n'est nulle. Alors on a un $k$-morphisme dominant $\pi\colon U\to \bbP^1_k$ associé au système linéaire de $V_k$.
Par construction le morphisme $\pi_K\colon U_K\to \bbP^1_K$ est de la forme 
$$\pi_K(x)=[\varphi_1(x):\varphi_2(x):\varphi_3(x):\varphi_4(x)]\in \bbP(V)\subset \bbP\Big(H^0\big(X,\mathcal{O}(2)\big)\Big).$$
Ainsi, pour un point schématique $p\in \pi_K(U_K)$ la fibre $\pi_K^{-1}(p)\subset \bbP^7_K$ est définie par les équations
$$\beta_1\varphi_1=\beta_2\varphi_2=\beta_3\varphi_3=\beta_4\varphi_4\neq 0$$
où $\beta_i\in {k(p)}^{*}$.
Après un changement de base linéaire sur $\overbar{k(p)}$ on peut la réécrire comme
$$x_0x_1=x_2x_3=x_4x_5=x_6x_7\neq 0$$
où $x_i$ sont des coordonnées homogènes dans  $\bbP^7_K$.

Donc, pour tout point $p\in\pi(U)$ la fibre $\pi^{-1}(p)$ est géométriquement isomorphe au tore  $(\mathbb{G}_m)^4$. D'après \cite[p.131 Lemme]{CTSk}, ceci implique que la fibre $\pi^{-1}(p)$ est isomorphe à un espace homogène principal sous un tore sur $k(p)$. Soit $\hat X\to X$ un modèle propre lisse de $X$ tel qu'il existe un prolongement $\hat \pi\colon \hat X\to\bbP^1_k$ du morphisme~$\pi$. Alors il existe un ouvert $W\subset \bbP^1_k$ tel que pour tout point $p\in W$ la fibre $\hat \pi^{-1}(p)$ est un modèle propre lisse d'un espace homogène principal sous un $k(p)$-tore. Ainsi, la fibre générique de $\hat \pi$ est géométriquement rationnelle. Toute fibre $\hat \pi^{-1}(p)$ au-dessus d'un point fermé $p\in W$ est également géométriquement rationnelle et satisfait la conjecture \hyperref[(Br-AF)]{(Br-AF)} sur $k(p)$ par \cite[Corollaire 8.13]{Sn}, donc elle satisfait la conjecture \hyperref[(E)]{(E)} par le Théorème \ref{TheoremLiang}. On conclut que $\hat X$ satisfait la conjecture \hyperref[(E)]{(E)} par le Théorème \ref{prop15} et satisfait le principe de Hasse lisse selon la~Proposition~\ref{core}.
\end{proof}
\subsection*{La conclusion}
\begin{theorem}\label{theoremeprincipal}
Soit $k$ un corps de nombres. Soit $X\subset \bbP^7_k$ une intersection complète géométriquement intègre non conique de deux quadriques. Alors tout modèle propre lisse de $X$ satisfait le principe de Hasse. 
\end{theorem}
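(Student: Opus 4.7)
Le plan est de procéder par une analyse de cas selon le pinceau géométrique des quadriques définissant $X$. Fixons des équations $F=G=0$ et posons $P(\lambda,\mu)=\det(\lambda F+\mu G)$. Si $P\equiv 0$, le Lemme \ref{lemma0}(i) fournit un point singulier $k$-rationnel de $X$, et le Lemme \ref{lemma0}(vi) donne immédiatement le principe de Hasse lisse. On suppose désormais $P$ non nul. Si le pinceau géométrique contient une forme de rang $\leqslant 5$, la Proposition \ref{cas5} conclut; si toute forme dégénérée est de rang $7$, la Proposition \ref{casreg} conclut. Il reste à traiter le cas où le pinceau géométrique contient une forme de rang exactement $6$ mais aucune de rang $\leqslant 5$.

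Les matrices symétriques étant diagonalisables sur $\bar k$, une forme de rang $6$ contribue à $P$ une racine d'ordre $2$ et une forme de rang $7$ une racine simple; en particulier le nombre $b$ de formes de rang $6$ dans le pinceau géométrique satisfait $b\in\{1,2,3,4\}$. Supposons d'abord qu'une telle forme $\Phi$ est $k$-rationnelle : son sommet $L=\bbP(\ker\Phi)$ est une $k$-droite, et la restriction à $L$ d'une autre forme du pinceau est une forme quadratique binaire, donc $L\cap X$ est un sous-schéma $k$-rationnel de longueur $2$. Le critère jacobien, combiné à l'annulation identique du différentiel de $\Phi$ le long de $L$, montre que chaque point de $L\cap X$ est singulier sur $X$. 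Si $L\subset X$ ou si $L\cap X$ admet un $k$-point, alors $X(k)\neq \varnothing$ et le Lemme \ref{lemma0}(vi) conclut; sinon $L\cap X$ est une paire conjuguée de points singuliers dans une extension quadratique, et le principe de Hasse lisse résulte du théorème de \cite{CSS} sur les intersections contenant une telle paire. Ce raisonnement traite en particulier le cas $b=1$.

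Si aucune forme de rang $6$ n'est $k$-rationnelle, toutes les orbites galoisiennes de telles formes sont de taille $\geqslant 2$, et les possibilités restantes sont : $b=2$ avec orbite quadratique (Proposition \ref{cas2conj}); $b=4$ quelle que soit la structure d'orbite (Proposition \ref{cas4formes}); et le sous-cas délicat $b=3$ avec une unique orbite cubique. Pour ce dernier, je propose un argument de changement de base : soit $K/k$ une extension cubique sur laquelle l'une des trois formes de rang $6$ devient $K$-rationnelle. Sur $K$, le sous-cas précédent s'applique et fournit le principe de Hasse lisse pour $X_K$. En choisissant $\hat X$ comme résolution des singularités de $X$, de sorte que $\hat X\to X$ soit un morphisme propre, un point adélique de $\hat X$ sur $k$ induit un point adélique de $\hat X_K$ sur $K$; le principe de Hasse lisse sur $K$ fournit alors un $K$-point de $\hat X_K$, puis un $K$-point de $X$ par propreté. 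L'extension $K/k$ étant de degré impair $3$, le Théorème \ref{prop16} d'Amer-Brumer-Springer descend ce $K$-point en un $k$-point de $X$, et le Lemme \ref{lemma0}(vi) conclut. Le principal obstacle de cette stratégie est précisément ce sous-cas à orbite cubique, qui n'est pas couvert directement par les trois propositions de la Section $3$ et requiert la descente par une extension de degré impair ci-dessus.
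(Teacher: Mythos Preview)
Your case analysis matches the paper's proof of Théorème~\ref{theoremeprincipal} almost exactly, and the argument is correct except for one flawed justification. The assertion that symmetric matrices are diagonalisable over $\bar k$ (as linear operators) is false --- for instance $\begin{psmallmatrix} 1 & i \\ i & -1\end{psmallmatrix}$ over $\bbC$ is nilpotent --- and a pencil of quadrics is not in general simultaneously diagonalisable, so a rank-$6$ form need not contribute a root of order \emph{exactly}~$2$. You should instead invoke Lemma~\ref{lemma0}(iii), which gives multiplicity $\geqslant 2$ and hence $2b\leqslant \deg P=8$; this is all you use.

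Two minor variations from the paper are worth noting. For a $k$-rational rank-$6$ form the paper simply cites \cite[Theorem~9.5]{CSS}, whereas you unfold the argument via the pair of singular points on the vertex line --- this is essentially what that theorem does. For the cubic-orbit case $b=3$, the paper obtains $\hat X(E)\neq\varnothing$ and then combines this with an auxiliary smooth point in a degree-$4$ extension to produce a zero-cycle of degree~$1$ on $\hat X$, concluding via Corollary~\ref{corcycle-point}; your direct descent $X(K)\neq\varnothing \Rightarrow X(k)\neq\varnothing$ via Théorème~\ref{prop16} (odd degree) is cleaner and avoids this extra step.
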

\begin{proof}
On peut supposer que le pinceau de $X$ est non dégénéré par le Lemme~\ref{lemma0}(i),(ii). Si le rang de toute forme dans le pinceau géométrique de $X$ est au moins 7, on conclut par la Proposition \ref{casreg}. S'il existe une forme de rang au plus 5 on utilise la Proposition \ref{cas5}. Désormais on suppose qu'il existe une forme de rang 6 dans le pinceau géométrique de $X$. Chaque telle forme correspond à une racine d'ordre $\geqslant 2$ du polynôme caractéristique de $X$ par le Lemme \ref{lemma0}(iii). Ainsi, il y en a au plus 4. Le cas de 4 formes de rang 6 est traité dans la Proposition \ref{cas4formes}. Donc, on suppose qu'il y en a au plus 3. Si l'une des formes de rang 6 est définie sur $k$ le principe de Hasse lisse résulte de \cite[Theorem~9.5]{CSS}. Si l'une de ces formes est définie sur une extension quadratique $K/k$ on conclut par la Proposition \ref{cas2conj}. 

Il reste le cas où le pinceau géométrique de $X$ contient 3 formes de rang 6 dans une extension cubique $E/k$. Alors $X_E$ satisfait le principe de Hasse lisse sur $E$ par \cite[Theorem~9.5]{CSS}. Soit $\hat X$ un modèle propre lisse de $X$. Supposons que $\hat X(k_v)\neq \varnothing$ pour toute place $v$, donc $\hat X(E)\neq \varnothing$. Comme $X$ est une intersection de deux quadriques, elle possède un point rationnel lisse dans une extension $L/k$ de degré 4. (Il suffit de considérer l'intersection de $X$ avec un $k$-plan suffisamment générique.) On conclut que $Z_0^1(\hat X)\neq 0$ et $X(k)\neq \varnothing$ par le Corollaire \ref{corcycle-point}. Finalement, le principe de Hasse lisse résulte du Lemme~\ref{lemma0}(vi).
\end{proof}

\appendix
\section{Appendice}

Le but de cet appendice est de démontrer qu'une intersection régulière de deux quadriques dans $\bbP^7$ sur un corps de nombres $k$ contient un $k_v$-plan pour presque toute valuation $v$ sans utiliser le résultat de Wang \cite{Wn} (Proposition \ref{propplan}). On montre également que la condition de régularité est essentielle (Exemple \ref{exemplenonreg}).

On commence par un lemme sur la diagonalisation d'une forme bilinéaire symétrique sur un anneau de valuation discrète, voir aussi \cite[\S 19]{EKM}. Soit $A$ un anneau de valuation discrète tel que $2\in A^*$. On note $\pi$ une uniformisante de $A$ et $k=A/(\pi)$ le corps résiduel. Pour un $A$-module $V$ on utilise la notation $\bar V = V\otimes_A k$.

\begin{lemma}
 Soient $V$ un $A$-module libre de rang fini et $\varphi\colon V\otimes V\to A$ une forme bilinéaire symétrique. Alors il existe une $A$-base $\{v_i\}$ de $V$ telle que $\varphi(v_i,v_j)=0$ pour~$i\neq j$. 
\end{lemma}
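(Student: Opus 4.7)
Mon plan est de procéder par récurrence sur le rang $n$ de $V$, en imitant un procédé de Gram-Schmidt, l'obstacle étant qu'il ne suffit pas de prendre n'importe quel vecteur $v_1$ tel que $\varphi(v_1,v_1)\neq 0$: il faut encore que $\{v\in V:\varphi(v_1,v)=0\}$ soit un facteur direct libre de $V$ et que $v_1$ engendre lui-même un facteur direct. Le candidat naturel est un vecteur qui minimise la valuation de $\varphi(v_1,v_1)$.

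Le cas $n=0$ est trivial et, si $\varphi=0$, toute base convient. Dans le cas général, comme $2\in A^*$, l'identité de polarisation
$$2\varphi(v,w)=\varphi(v+w,v+w)-\varphi(v,v)-\varphi(w,w)$$
montre que $\varphi\neq 0$ entraîne l'existence d'un $v\in V$ avec $\varphi(v,v)\neq 0$. Soit alors $v_1\in V$ réalisant le minimum $m=\min\{v(\varphi(v,v)):v\in V,\,\varphi(v,v)\neq 0\}$, minimum qui existe car l'ensemble des valuations est contenu dans $\bbZ_{\geqslant 0}$.

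Les deux propriétés cruciales à établir sont les suivantes. D'une part, pour tous $v,w\in V$ on a $v(\varphi(v,w))\geqslant m$: de nouveau par polarisation, $2\varphi(v,w)$ est combinaison à~coefficients dans $A$ de trois termes diagonaux, dont chacun a une valuation $\geqslant m$. D'autre part, $v_1$ est primitif, c'est-à-dire $v_1\notin \pi V$: sinon $v_1=\pi v_1'$ donnerait $\varphi(v_1',v_1')=\pi^{-2}\varphi(v_1,v_1)\in A$ de valuation $m-2$, contredisant la minimalité de~$m$.

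On peut alors poser $W=\{v\in V:\varphi(v_1,v)=0\}$; pour tout $v\in V$, le scalaire $a=\varphi(v_1,v)/\varphi(v_1,v_1)$ appartient à $A$ d'après la première propriété, donc $v-av_1\in W$, et la~somme $Av_1+W$ est directe puisque $\varphi(v_1,v_1)$ est non nul dans l'anneau intègre $A$. Le~module $W$ est libre de rang $n-1$ comme facteur direct d'un module libre, et $v_1$ se~complète en une base de $V$ grâce à sa primitivité. L'hypothèse de récurrence appliquée à $(W,\varphi|_{W\otimes W})$ fournit une base orthogonale $\{v_2,\dots,v_n\}$ de $W$ pour $\varphi|_W$, et $\{v_1,\dots,v_n\}$ est alors la base cherchée. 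Le~point le plus délicat est l'argument de primitivité couplé à l'inégalité de valuation — c'est lui qui utilise de façon essentielle le fait que $A$ est un anneau de valuation discrète, et non simplement un anneau local.
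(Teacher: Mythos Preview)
Your proof is correct. Both you and the paper argue by induction on the rank and split off a rank-one orthogonal factor via Gram--Schmidt; the difference lies only in how one ensures that the quotient $\varphi(v_1,v)/\varphi(v_1,v_1)$ lands in~$A$. The paper first factors out the largest power of~$\pi$ dividing~$\varphi$, writing $\varphi=\pi^k\varphi'$ with $\bar\varphi'\neq 0$, and then picks $u$ with $\varphi'(u,u)\in A^*$, so that the division is trivially integral. You instead choose $v_1$ to minimise $v\big(\varphi(v_1,v_1)\big)$ and use polarisation to bound every $v\big(\varphi(v,w)\big)$ from below by this minimum~$m$. These are two phrasings of the same idea --- your minimum $m$ is exactly the paper's exponent~$k$ --- but the paper's global rescaling is slightly slicker and sidesteps the valuation bookkeeping. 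A small remark: your primitivity check, while correct, is not needed once you have established $V=Av_1\oplus W$; since $A$ is a domain and $v_1\neq 0$, the summand $Av_1$ is already free of rank one, so $v_1$ together with any basis of~$W$ is automatically a basis of~$V$.
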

\begin{proof} On raisonne par récurrence sur le rang de $V$. Supposons que la forme induite $\bar \varphi\colon \bar V\otimes \bar V\to k$ est non nulle. Alors il existe un élément $u\in V$ tel que $\varphi(u,u)\in A^*$. Soit $x\in V$, on définit
$$
x' = \frac{\varphi(x,u)}{\varphi(u,u)}u.
$$
Alors $x=x'+(x-x')$ et $\varphi(x-x',u)=0$. Ainsi, on a une décomposition orthogonale $V=\langle u\rangle\oplus \langle u\rangle^{\perp}$. Il suffit donc d'appliquer l'hypothèse à la restriction $\varphi|_{\langle u\rangle^{\perp}}$. 

Maintenant, on considère le cas où $\bar \varphi = 0$. Si la forme $\varphi$ est nulle l'énoncé est triviale. Sinon, on peut écrire $\varphi = \pi^k \varphi'$ avec $\bar \varphi'\neq 0$ et appliquer l'étape précédente à $\varphi'$.
\end{proof}

\begin{corollary}\label{cordiag}
Soient $V$ un $A$-module libre de rang fini et $F\in S^2(V^*)$ une forme quadratique. Supposons que la~forme $\bar F\in S^2(\bar V^*)$ est de rang $r$.  Alors il existe une $A$-base orthogonale de $V$ dans laquelle $F$ s'écrit comme 
$$F=\langle u_1\rangle\perp\langle u_2\rangle\perp\dots\perp\langle u_r\rangle\perp\langle\pi v_1\rangle\perp\langle\pi v_2\rangle\perp\dots \langle\pi v_n\rangle$$
où $v_i\in A$ et $u_i\in A^*$.
\end{corollary}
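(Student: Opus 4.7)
Mon plan est d'appliquer directement le lemme précédent, puis d'interpréter la condition $\rk(\bar F)=r$ en termes de valuation des coefficients diagonaux. Comme $2\in A^*$, la forme quadratique $F$ est équivalente à la donnée de la forme bilinéaire symétrique associée $\varphi(x,y)=\tfrac{1}{2}\bigl(F(x+y)-F(x)-F(y)\bigr)$. Le lemme appliqué à $\varphi$ fournit une $A$-base $\{e_1,\dots,e_n\}$ de $V$ telle que $\varphi(e_i,e_j)=0$ pour $i\neq j$; dans cette base, $F$ est diagonale et s'écrit $F=\sum_{i=1}^n a_i x_i^2$ avec $a_i=\varphi(e_i,e_i)\in A$.

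Je réduirais ensuite modulo $\pi$. La forme $\bar F$ est diagonale dans la base $\{\bar e_1,\dots,\bar e_n\}$ de $\bar V$ avec coefficients $\bar a_i\in k$, de sorte que $\rk(\bar F)$ égale le nombre d'indices $i$ pour lesquels $\bar a_i\neq 0$, c'est-à-dire le nombre d'indices $i$ tels que $a_i\in A^*$. L'hypothèse $\rk(\bar F)=r$ garantit donc l'existence d'exactement $r$ indices $i$ tels que $a_i\in A^*$, les $n-r$ indices restants correspondant à $a_i\in \pi A$. Il suffit alors de réordonner la base en plaçant en premier les vecteurs dont le carré est inversible, puis d'écrire $a_{r+j}=\pi v_j$ avec $v_j\in A$ pour $1\leqslant j\leqslant n-r$, ce qui donne la décomposition annoncée.

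Il n'y a pas d'obstacle substantiel à franchir ici: toute la difficulté est concentrée dans le lemme de diagonalisation précédent, dont l'ingrédient clé est l'existence d'un vecteur $u$ vérifiant $\varphi(u,u)\in A^*$ dès que $\bar\varphi\neq 0$, ce qui permet la récurrence par passage à l'orthogonal. Le seul point à surveiller dans le corollaire est la convention de rang pour une forme dégénérée sur un corps, et que la réduction modulo $\pi$ commute avec la mise sous forme diagonale; les deux points sont immédiats une fois la base orthogonale fixée.
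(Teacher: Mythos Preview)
Your proposal is correct and is exactly the immediate deduction from the preceding lemma that the paper has in mind; the paper in fact states this corollary without proof. There is nothing to add: diagonalise over $A$ using the lemma, then observe that $\rk(\bar F)$ counts the diagonal entries lying in $A^*$ and that the remaining entries lie in $\pi A$.
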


Soit $F$ un corps, $\car(F)\neq 2$. On associe à chaque forme quadratique son invariant de Clifford, ceci définit un homomorphisme $\cliff\colon I^2(F)\to \Br(F)[2]$ \cite[Theorem 14.3]{EKM} où $I^2(F)$ est le carré de l'idéal fondamental dans l'anneau de Witt, voir~\cite[\S4]{EKM}. 

Soit $A$ un anneau de valuation discrète tel que $2\in A^*$. Soient $K=\Frac(A)$ le corps de fractions et $k=A/(\pi)$ le corps résiduel. On rappelle que l'application de résidu $\res\colon \Br(K)\to H^1(k,\bbQ/\bbZ)$ \cite[1.4.3]{CTSk21} peut être définie au niveau des formes quadratiques \cite[Lemma 19.10]{EKM} et on a un diagramme commutatif
\begin{equation}\label{resdiag}
\begin{CD}
I^2(K) @>{\res}>> I^1(k) \\
@V{\cliff}VV @V{\disc}VV  \\
\Br(K)[2] @>{\res}>> k^*/(k^*)^2 
\end{CD}\end{equation}

On fixe la notation suivante. Soit $k$ un corps, $\car(k)\neq 2$ et soit $X\subset\bbP^7_k$ une intersection de deux quadriques donnée par des équations $F=G=0$. Supposons que le pinceau de $X$ est non dégénéré. Alors $\Psi=F+tG$ est une $k(t)$-forme quadratique non dégénérée, on lui associe l'invariant de Clifford $\alpha =\cliff(\Psi)\in \Br\big(k(t)\big)$ \cite[\S14]{EKM}. Quand la courbe $C\colon y^2=\disc(\Psi)$ est géométriquement irréductible on note $\tilde C$ sa normalisation et $k(C)$ le corps des fonctions rationnelles. Si $C$ est géométriquement réductible on a $\disc(\Psi)=aH(t)^2$ où $a\in k$ et $H(t)\in k[t]$ est un polynôme. Dans ce cas, on définit $k(C)=k(t)(\sqrt{a})$ et $\tilde C=\bbP^1_{k(\sqrt{a})}$. On note $\alpha_C\in Br\big(k(C)\big)$ la restriction de la classe~$\alpha$~à~$k(C)$. 

\begin{proposition}\label{prop1}
Dans la notation introduite ci-dessus, supposons que la variété $X$ contient un~plan sur $k$. Alors $\alpha_C=0\in \Br\big(k(C)\big)$. De plus, si $X$ contient une droite et $\alpha_C=0$ alors $X$ contient un plan.
\end{proposition}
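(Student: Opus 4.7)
Pour l'implication directe, un $k$-plan $\Pi \subset X$ est un sous-espace $k$-linéaire de dimension $3$ totalement isotrope pour $F$ et $G$, donc pour $\Psi = F + tG$ sur $k(t)$; ainsi $\Psi_{k(t)} \simeq 3\mathbb{H} \perp \Psi_0$ avec $\Psi_0$ binaire, et comme $\disc(3\mathbb{H}) \equiv -1$ modulo carrés, on a $\disc \Psi \equiv -\disc \Psi_0$ dans $k(t)^*/(k(t)^*)^2$. Sur $k(C) = k(t)(\sqrt{\disc \Psi})$, $\disc \Psi$ devient un carré, donc $-\disc \Psi_0$ aussi; pour la forme binaire $\Psi_0$, ceci équivaut à être hyperbolique. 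Par conséquent $\Psi_{k(C)}$ est totalement hyperbolique dans le groupe de Witt $W(k(C))$, d'où $\alpha_C = \cliff(\Psi_{k(C)}) = 0$.

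Pour la réciproque, le plan est d'utiliser le théorème de Merkurjev et le Hauptsatz d'Arason--Pfister pour déduire l'hyperbolicité de $\Psi_{k(C)}$, puis de descendre à $k(t)$ par action de Galois. La droite $L \subset X$ donne $\Psi_{k(t)} \simeq \mathbb{H} \perp \Psi_1$ avec $\dim \Psi_1 = 6$. L'annulation de $\alpha_C$ et l'injectivité $\cliff \colon I^2(k(C))/I^3(k(C)) \hookrightarrow \Br(k(C))[2]$ (théorème de Merkurjev) placent $\Psi_{k(C)}$ dans $I^3(k(C))$; comme le Hauptsatz d'Arason--Pfister interdit aux formes anisotropes de $I^3$ d'avoir une dimension inférieure à $8$, la forme $\Psi_{1,k(C)}$ est hyperbolique, et $\Psi_{k(C)}$ est totalement hyperbolique. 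En choisissant un sous-espace maximal isotrope $V \subset k(C)^8$ de dimension $4$ contenant $L \otimes_k k(C)$, et en observant que $\sigma \in \Gal(k(C)/k(t))$ échange les deux familles de sous-espaces isotropes maximaux (puisque $\disc \Psi$ n'est pas un carré dans $k(t)$), un argument de parité combiné à $L \otimes_k k(C) \subset V \cap V^\sigma$ impose $\dim(V \cap V^\sigma) = 3$. Ainsi $W = V \cap V^\sigma$ est un sous-espace $k(t)$-isotrope de $\Psi$ de dimension $3$ contenant $L$.

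L'obstacle principal est alors la descente finale: passer d'un sous-espace $k(t)$-isotrope de $\Psi$ à un $k$-plan de $X$, c'est-à-dire à un sous-espace de $k^8$ isotrope simultanément pour $F$ et $G$. Un sous-espace générique de $k(t)^8$ ne descend pas à $k$. Je m'attends à ce que l'argument complet exploite la rationalité de $L$ et l'interprétation géométrique de $W$, par exemple via l'analyse du $k$-espace $L^{\perp_F} \cap L^{\perp_G}/L$ muni des formes induites par $F$ et $G$, ou via une projection depuis $L$ qui ramène la question à une intersection de quadriques de dimension inférieure; l'hypothèse $\alpha_C = 0$ devrait y intervenir pour garantir l'existence d'un $k$-point rationnel dans l'objet de descente, fournissant ainsi la troisième direction qui, adjointe à $L$, complète le plan cherché.
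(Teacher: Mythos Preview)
Ta preuve de la première implication est correcte et coïncide avec celle de l'article.

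Pour la réciproque, il y a d'abord une erreur de comptage : une droite $L \subset X$ correspond à un sous-espace vectoriel de dimension~$2$ dans $k^8$, totalement isotrope pour $F$ et $G$, donc $\Psi \simeq 2H \perp T$ avec $\dim T = 4$ (et non $H \perp \Psi_1$ avec $\dim \Psi_1 = 6$). Cette erreur n'est pas fatale pour ton argument de Merkurjev--Arason--Pfister, mais elle révèle une confusion ; d'ailleurs tu utilises implicitement $\dim(L \otimes k(C)) = 2$ dans l'argument de parité galoisienne, ce qui est correct.

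Le vrai problème est la lacune finale que tu identifies toi-même : la descente de $k(t)$ à $k$. Or cette descente est \emph{exactement} le théorème d'Amer--Brumer (cité dans l'article comme \cite[Proposition~1.3]{CT}) : les formes $F$ et $G$ admettent un sous-espace isotrope commun de dimension $r$ sur $k$ si et seulement si $\Psi = F + tG$ est d'indice de Witt $\geqslant r$ sur $k(t)$. Ton sous-espace $W = V \cap V^\sigma$ de dimension $3$, isotrope pour $\Psi$ sur $k(t)$, donne donc directement un $k$-plan dans $X$. Tes pistes alternatives (analyse de $L^{\perp_F} \cap L^{\perp_G}/L$, projection depuis $L$) sont inutilement compliquées.

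Signalons que l'article procède différemment et plus simplement pour l'étape intermédiaire : au lieu de ton argument de parité galoisienne sur les lagrangiens, il travaille avec le complément $T$ de rang~$4$, montre que $T$ est hyperbolique sur $k(C)$ (discriminant trivial et Clifford nul, donc dans $I^3$, donc hyperbolique par Pfister), puis invoque \cite[Proposition~1.2]{CT} pour en déduire que $T$ est déjà isotrope sur $k(t)$, d'où $\Psi \simeq 3H \perp T'$ sur $k(t)$, et conclut par Amer--Brumer. Ton argument galoisien est valide mais plus long ; les deux approches se rejoignent à l'invocation d'Amer--Brumer, qui est le point que tu as manqué.
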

\begin{proof}
Si $X$ contient un plan, par le Lemme \ref{lemme4} on peut écrire $\Psi=3H\perp T$ sur $k(t)$ où $H=\langle -1,1\rangle$ est un plan hyperbolique.
On~a 
$$\disc(\Psi)=\disc(3H\perp T)=\disc (T).$$ Comme la forme $T$ est de rang 2, elle est isotrope sur le corps $k(C)=k(\sqrt{\disc (T)})$. Ainsi sur $k(C)$ la forme $\Psi$ est purement hyperbolique et son invariant de Clifford $\alpha_C$ est nul.

Supposons que $X$ contient une droite et que $\alpha_C=0$. Par le Lemme \ref{lemme4} on décompose $\Psi=2H\oplus^{\perp} T$. On a $\disc(\Psi)=\disc (T)$, donc sur le corps $k(C)$ le discriminant de~la~forme~$T$ est un carré. On calcule l'invariant de Clifford en utilisant \cite[Lemma 14.2.]{EKM} $$\alpha_C=\cliff_{k(C)}(\Psi)=\cliff_{k(C)}(2H\oplus^{\perp} T)=\cliff_{k(C)}(2H)+\cliff_{k(C)}(T)=\cliff_{k(C)}(T).$$ Par un théorème de Pfister \cite[Théorème 8.1.1]{Kh} la forme $T$ est purement hyperbolique sur $k(C)$. Alors $T$ est isotrope sur $k(t)$ d'après \cite[Proposition 2.2]{CT} et on a une décomposition $\Phi=3H\oplus^{\perp} T'$, donc, l'énoncé résulte du théorème de Amer-Brumer \cite[Proposition~2.3]{CT}. 
\end{proof}

\begin{proposition}\label{prop2}
Soit $k$ un corps de nombres. Soit $X\subset \bbP^7_k$ une intersection complète géométriquement intègre de deux quadriques dont le pinceau est non dégénéré et ne contient pas de formes de rang $\leqslant 5$. Alors $X_{k_v}$ contient une droite sur $k_v$ pour presque toute place $v$ de $k$.
\end{proposition}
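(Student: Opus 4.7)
La stratégie consiste à appliquer les estimations de Lang-Weil à une variété d'incidence paramétrant les couples $(P,L)$ où $P\in X$ est un point lisse convenable et $L\subset X$ une droite passant par $P$.

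On applique d'abord le Lemme \ref{lemme6} sur $\bar k$ pour obtenir un ouvert non vide $U\subset X_{\lisse}$ tel que pour tout $P\in U$, l'intersection $X\cap T_{X,P}$ soit un cône simple de sommet $P$ sur une intersection complète géométriquement intègre non conique de deux quadriques $Y_P\subset \bbP^4_{\bar k}$. Comme les conditions définissant $U$ sont algébriques et $\Gal(\bar k/k)$-stables, l'ouvert $U$ est défini sur $k$.

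On considère la variété d'incidence
$$\mathcal{V}=\big\{(P,L)\in U\times \Gr_1(\bbP^7_k) \,:\, L\subset X,\ P\in L\big\}.$$
La projection $\pi\colon\mathcal{V}\to U$ a pour fibre en $P\in U$ l'ensemble des droites contenues dans $X$ passant par $P$, qui s'identifie à $Y_P$ via la projection depuis $P$. Comme toutes les fibres de $\pi$ sont géométriquement intègres de dimension constante $2$ sur une base géométriquement intègre de dimension $5$, la variété $\mathcal{V}$ est géométriquement intègre de dimension $7$.

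On choisit un modèle entier de $\mathcal{V}$ sur $\mathcal{O}_{k,S}$ pour un ensemble fini $S$ de places suffisamment grand afin que la réduction en chaque $v\notin S$ reste géométriquement intègre. Les estimations de Lang-Weil \cite{LW} entraînent alors que pour presque toute place $v\notin S$, la réduction de $\mathcal{V}$ modulo $v$ admet un point lisse sur $\mathbb{F}_v$. Par le lemme de Hensel, ce point se relève en un $k_v$-point $(P_v,L_v)\in\mathcal{V}(k_v)$. La composante $L_v$ est alors une $k_v$-droite contenue dans $X_{k_v}$, ce qui achève la démonstration.

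L'ingrédient clé est le Lemme \ref{lemme6}, qui fournit la description géométrique uniforme des fibres $Y_P$ et permet ainsi de contrôler l'intégrité géométrique de la variété d'incidence $\mathcal{V}$; l'étape la plus technique est la vérification soigneuse que l'ouvert $U$, ainsi que la fibration $\pi$, se spécialisent bien pour presque toute place $v$.
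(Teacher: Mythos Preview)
Your proof is correct and follows essentially the same route as the paper: both arguments use the Lemme~\ref{lemme6} to obtain an open $U\subset X$ over which the fibres of the incidence variety of pointed lines are geometrically integral surfaces, deduce that this incidence variety is itself geometrically integral, and conclude by Lang--Weil. The only cosmetic difference is that the paper projects the incidence variety to the Fano scheme $F_1\subset\Gr_1(\bbP^7_k)$ before invoking Lang--Weil, whereas you apply Lang--Weil directly to $\mathcal{V}$ and lift via Hensel; both are standard.
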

\begin{proof}
Soit $F_1\subset \Gr_1(\bbP^7_k)$ la variété des droites contenues dans $X$. Soit $\tilde F_1\subset F_1\times X$ la variété des couples $(\ell,P)$ où $\ell$ est une droite contenue dans $X$ est $P\in \ell$ est un point. On considère la projection $f\colon F_1\to X$. Pour un point $P\in X$ on note $Y_P\subset \bbP^4$ la base du cône $X\cap T_{X,P}$. Alors $Y_P$ est isomorphe à la fibre $f^{-1}(P)$. Par le Lemme~\ref{lemme6} il existe un ouvert $U\subset X$ tel que $Y_P$ est géométriquement intègre de dimension 2 pour tout point géométrique $P\in U$. Comme le morphisme $f$ est propre, cela implique que l'image réciproque $W=f^{-1}(U)$ est géométriquement intègre de dimension $5+2=7$. On note que l'autre projection $\pi:\tilde F_1\to F_1$ est un fibré projectif en droites. Ainsi, l'image $\pi(W)\subset F_1$ est une sous-variété géométriquement intègre et définie sur $k$ de dimension $\geqslant 7-1=6$. Donc, les estimations de Lang-Weil \cite{LW} assurent l'existence d'un $k_v$-point rationnel sur $\pi(W)$ pour presque toute place $v$.
\end{proof}

\begin{proposition}\label{prop4}
 Soit $k$ un corps de nombres et soit $X\subset\bbP^7_k$ une intersection complète géométriquement intègre de deux quadriques dont le pinceau est non dégénéré et ne~contient pas de formes de rang $\leqslant 5$. Alors les énoncés suivants sont équivalents.
 \begin{itemize}
     \item[(i)] la classe $\alpha_C$ appartient à~$\Br(\tilde C)\subset \Br\big(k(C)\big)$;
     \item[(ii)] $X_{k_v}$ contient un plan pour presque toute $v$.
 \end{itemize}
\end{proposition}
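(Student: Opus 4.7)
Le plan est de traduire la condition~(ii) en une assertion d'annulation pour $\alpha_C$, puis d'établir l'équivalence obtenue par l'analyse des résidus de $\alpha_C$ aux points fermés de~$\tilde C$. D'abord, par la Proposition~\ref{prop2}, $X_{k_v}$ contient une $k_v$-droite pour presque toute place~$v$. Pour une telle place, la Proposition~\ref{prop1} appliquée sur $k_v$ donne: $X_{k_v}$ contient un $k_v$-plan si et seulement si $\alpha_{C,k_v} = 0$ dans $\Br(k_v(C))$. Ainsi la condition~(ii) équivaut à $\alpha_{C,k_v} = 0$ pour presque toute place~$v$, et il suffit de montrer que cette propriété est équivalente à $\alpha_C \in \Br(\tilde C)$.

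Pour l'implication (i)~$\Rightarrow$~(ii), supposons $\alpha_C \in \Br(\tilde C)$. On traite d'abord le~cas où $\tilde C$ est géométriquement intègre sur~$k$. Pour presque toute place $v$, la courbe $\tilde C$ admet une bonne réduction en~$v$, on a $2 \in \mathcal{O}_v^{\times}$, et la classe $\alpha_C$ s'étend au groupe de Brauer d'un modèle propre lisse $\tilde{\mathcal{C}} \to \Spec \mathcal{O}_v$. La fibre spéciale $\tilde C_{\kappa_v}$ est une courbe propre lisse géométriquement intègre sur le corps fini $\kappa_v$; le théorème de Lang appliqué à sa jacobienne donne $\Br(\tilde C_{\kappa_v}) = 0$. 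En combinant la suite exacte de Kummer et le théorème de changement de base propre et lisse, on en déduit $\Br(\tilde{\mathcal{C}})[2] = 0$, et donc $\alpha_{C,k_v}$ a pour image zéro dans $\Br(\tilde C_{k_v}) \subset \Br(k_v(C))$. Dans le cas restant où $C$ est géométriquement réductible, on a $\tilde C = \bbP^1_{k(\sqrt{a})}$ et $\Br(\tilde C) = \Br(k(\sqrt{a}))$ par la suite de Faddeev sur~$\bbP^1$; la conclusion résulte alors du théorème de Brauer--Hasse--Noether appliqué sur l'extension quadratique $k(\sqrt{a})$.

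Pour l'implication réciproque (ii)~$\Rightarrow$~(i), on raisonne par contraposée. Si $\alpha_C \notin \Br(\tilde C)$, la suite exacte de Faddeev fournit un point fermé $x$ de $\tilde C$ dont le résidu $\chi := \partial_x(\alpha_C) \in H^1(k(x), \bbQ/\bbZ)$ est non trivial; soit $L_x / k(x)$ l'extension cyclique correspondante. Par le théorème de Chebotarev appliqué sur~$k(x)$, il existe une infinité de places $w$ de $k(x)$ pour lesquelles $\chi$ est non triviale dans $H^1(k(x)_w, \bbQ/\bbZ)$. Chaque telle place $w$ se restreint en une place $v = w|_k$ de~$k$ et correspond à un point fermé $y$ de $\tilde C_{k_v}$ au-dessus de~$x$ de corps résiduel $k(x)_w$. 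La compatibilité des résidus avec le changement de base donne $\partial_y(\alpha_{C,k_v}) = \chi|_{k(x)_w} \neq 0$, donc $\alpha_{C,k_v} \neq 0$ dans $\Br(k_v(C))$ pour une infinité de places~$v$, ce qui contredit~(ii).

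Le point le plus délicat est l'annulation $\Br(\tilde{\mathcal{C}})[2] = 0$ invoquée dans l'étape (i)~$\Rightarrow$~(ii); elle repose sur l'isomorphisme de changement de base propre et lisse $H^2_{\text{ét}}(\tilde{\mathcal{C}}, \mu_2) \iso H^2_{\text{ét}}(\tilde C_{\kappa_v}, \mu_2)$ combiné avec l'annulation classique du groupe de Brauer des courbes propres lisses géométriquement intègres sur un corps fini. Une attention supplémentaire s'impose dans le cas géométriquement réductible et, dans la troisième étape, lorsque~$x$ n'est pas $k$-rationnel.
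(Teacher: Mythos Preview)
Your proof is correct and follows essentially the same strategy as the paper: both directions hinge on Propositions~\ref{prop1} and~\ref{prop2} to reduce to the vanishing of $\alpha_{C,k_v}$, and the implication (ii)~$\Rightarrow$~(i) via residues and Chebotarev is identical to the paper's argument. The only difference is cosmetic: for (i)~$\Rightarrow$~(ii) the paper simply cites \cite[Theorem~10.3.1]{CTSk21} for the local vanishing of $\alpha_C\in\Br(\tilde C)$, whereas you spell out the good-reduction argument (note that your deduction of $\Br(\tilde{\mathcal{C}})[2]=0$ from Kummer and proper base change implicitly uses the surjectivity of $\Pic(\tilde{\mathcal{C}})\to\Pic(\tilde C_{\kappa_v})$, which holds but deserves a word).
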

\begin{proof}
On note que (i) implique (ii). Effectivement, par la Proposition \ref{prop2} $X$~contient une droite pour presque toute $v$. La classe $\alpha_C\in Br(\tilde C)$ s'annule pour presque toute $v$ d'après \cite[Theorem 10.3.1]{CTSk21} et on conclut par la Proposition \ref{prop1}.
 
 Montrons que (ii) implique (i). Par les Propositions \ref{prop1}, \ref{prop2}, $\alpha_C$ s'annule dans $\Br\big(k_v(C)\big)$ pour presque toute $v$. L'application de résidu nous fournit un diagramme commutatif pour chaque place~$v$ et chaque point fermé $x\in \tilde C $ 
\begin{equation*}
\begin{CD}
1 @>>> \Br(\tilde C) @>>> \Br\big(k(C)\big) @>{\res_x}>> \displaystyle{ H^1\big(k(x),\bbQ/\bbZ\big)} \\
@. @VVV @VVV @VVV \\
1 @>>> \Br(\tilde C_v) @>>> \Br\big(k_v(C)\big) @>{\res_x}>> \displaystyle{ \bigoplus_{w|v} H^1\big(k(x)_w,\bbQ/\bbZ\big)}  
\end{CD}\end{equation*}
où la deuxième somme est pour chaque valuation $w$ de $k(x)$ au dessus de la valuation $v$ de $k$. Les suites horizontales sont des complexes de groupes.
 
Supposons que $\alpha_C\notin Br(\tilde C)$. Par \cite[Theorem 3.6.1]{CTSk21} il existe un point fermé $x\in \tilde C$ tel que le résidu $\beta=\res_x(\alpha_C)\in H^1\big(k(x),\bbQ/\bbZ\big)$ est non trivial. On déduit du diagramme ci-dessus que $\beta_w\in H^1\big(k(x)_w,\bbQ/\bbZ\big)$ est nulle pour presque toute valuation $w$ de $k(x)$. Ceci contredit le théorème de Tchebotariov qui assure qu'un nombre infini de places $w$ sont inertes dans l'extension cyclique de $k(x)$ définie 
 par $\beta$.
\end{proof}

\begin{proposition}\label{propplan}
Soit $k$ un corps de nombres et soit $X\subset\bbP^7_k$ une intersection complète géométriquement intègre de deux quadriques dont le pinceau est non dégénéré et ne~contient pas de formes de rang $\leqslant 6$. Alors $X_{k_v}$ contient un plan pour presque toute place $v$ de $k$.
\end{proposition}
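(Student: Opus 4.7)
First, I would invoke Proposition \ref{prop4}: combined with Proposition \ref{prop2}, which guarantees a $k_v$-line on $X_{k_v}$ for almost every $v$, it suffices to show that the Clifford class $\alpha_C \in \Br(k(C))$ lies in the unramified subgroup $\Br(\tilde C)$. One may also assume that $F$ and $G$ both have rank $8$ over $k$: under the regularity hypothesis some form in the pencil has rank $8$, and since $k$ is infinite the pencil contains infinitely many such $k$-forms.

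The regularity assumption enters a second time via Lemma \ref{lemma0}(iii): every root of the characteristic polynomial $\chi(t) = \det(F + tG)$ is simple, so $\disc(\Psi)$ has only simple zeros over $\bar k$. Consequently $\tilde C = C$ is already a smooth hyperelliptic curve, and the double cover $f \colon \tilde C \to \bbP^1_k$ is ramified exactly over these simple zeros.

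The main step is to check that $\alpha_C$ has trivial residue at every closed point $P \in \tilde C$. Since $\disc(\Psi) = y^2$ on $\tilde C$, the form $\Psi_{k(C)}$ belongs to $I^2(k(C))$ and $\alpha_C = \cliff(\Psi_{k(C)})$ is well defined. I would show that $\Psi_{k(C)}$ has good reduction at every closed point of $\tilde C$, which directly yields the unramifiedness of $\alpha_C$. This is clear at points where $\disc(\Psi)$ is a unit. At a ramification point $P$ above a simple zero $t_0 \in \bbA^1$ of $\disc(\Psi)$, Corollary \ref{cordiag} provides a diagonalization $\Psi \cong \langle u_1, \ldots, u_7, \pi_0 v\rangle$ over $\mathcal{O}_{\bbP^1_k, t_0}$ with $u_i, v$ units and $\pi_0$ a uniformizer; but over $\mathcal{O}_{\tilde C, P}$ the relation $y^2 = \disc(\Psi)$ forces $\pi_0$ to be a unit times $y^2$, so the last coefficient becomes a unit modulo squares and $\Psi$ extends to a non-degenerate form over $\mathcal{O}_{\tilde C, P}$. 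At a point above $\infty \in \bbP^1$, the scalar multiple $s\Psi = sF + G$ (with $s = 1/t$) is non-degenerate at $s = 0$ since $\rk(G) = 8$; using that on $I^2$ the Clifford invariant is invariant under scaling by a nonzero element, $\alpha_C$ is unramified above $\infty$ as well.

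Putting these together yields $\alpha_C \in \Br(\tilde C)$, and Proposition \ref{prop4} concludes. The main content of the argument lies in the computation at ramification points: the regularity hypothesis ensures that only simple degenerations of $\Psi$ occur, and the double cover $\tilde C \to \bbP^1_k$ precisely absorbs each simple degeneration into a squared uniformizer $y^2$, forcing the Clifford invariant to be unramified on the whole of $\tilde C$.
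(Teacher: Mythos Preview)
Your approach is the same as the paper's: reduce to $\alpha_C \in \Br(\tilde C)$ via Proposition~\ref{prop4}, then kill every residue by diagonalizing $\Psi$ over the local ring of $\bbP^1$ at the image point with seven unit entries (Corollary~\ref{cordiag} plus the rank~$\geqslant 7$ hypothesis), and observe that on $\tilde C$ the eighth entry becomes a unit times a square because $y^2=\disc(\Psi)$.

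One correction, however: your appeal to Lemma~\ref{lemma0}(iii) to conclude that the zeros of $\chi(t)$ are simple is mistaken. That lemma gives only a \emph{lower} bound $\geqslant n+1-r$ on the multiplicity, which is vacuous for $r=7$; and indeed one can have rank $\geqslant 7$ throughout the pencil while $\chi$ has a multiple zero (take a $2\times 2$ block $\left(\begin{smallmatrix}0&t\\t&1\end{smallmatrix}\right)$ direct-summed with six generic diagonal terms $a_i+tb_i$). Your phrasing ``$v$ a unit'' and ``$\pi_0$ is a unit times $y^2$'' then fails at such a point. Fortunately the simplicity is unnecessary. The paper argues uniformly: with $\Psi\cong\langle u_1,\dots,u_7,w\rangle$ over $\mathcal{O}_{\bbP^1,s}$ and $u=\prod u_i$, the relation $y^2=\disc(\Psi)=uw$ in $k(C)$ gives $w=(y/u)^2u$, so over $k(C)$ one has $\Psi\cong\langle u_1,\dots,u_7,u\rangle$ with all entries pulled back from units in $\mathcal{O}_{\bbP^1,s}$, hence units in $\mathcal{O}_{\tilde C,x}$, and $\res_x(\alpha_C)=0$. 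No case split between ramified and unramified points, and no hypothesis on the multiplicities, is needed.
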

\begin{proof}
Selon la Proposition \ref{prop4} il suffit de vérifier que $\alpha_C\in\Br(\tilde C)$. 
Par \cite[Theorem 3.6.1]{CTSk21}, on  doit montrer que le résidu $\res_x (\alpha_C)$ est nul pour tout point fermé $x\in \tilde C$. Soit $s\in \bbP^1_k$ l'image de $x$ sous la projection $\tilde C\to \bbP^1_k$ et soit $A=\mathcal{O}_{\bbP^1_k,s}$ l'anneau locale en $s$.
Par le Corollaire \ref{cordiag} on peut diagonaliser $\Psi$ sur $k(t)$ comme 
$$\Psi \simeq_{\,k(t)} \langle u_1\rangle\perp\langle u_2\rangle\perp\dots\perp\langle u_7\rangle\perp\langle v\rangle$$
où $u_i\in A^*$ sont inversibles. Soit $u=\prod_{i} u_i$. Dans $k(C)$ on a $y^2=uv$, $v=(y/u)^2u$. Alors la forme $\Psi$ s'écrit comme $$\Psi \simeq_{\,k(C)} \langle u_1\rangle\perp\langle u_2\rangle\perp\dots\perp\langle u_7\rangle\perp\langle u\rangle$$
et on conclut que $\res_x(\Psi)=0$ puisque tous les coefficients sont inversibles. On déduit du diagramme \eqref{resdiag} que $\res_x(\alpha_C)=0$.
\end{proof}

\begin{example}\label{exemplenonreg} Il est facile de construire un exemple d'une intersection de deux quadriques dont le pinceau contient une forme de rang $6$ telle que $X_{k_v}$ ne contient pas de plan pour un nombre infini de places $v$. Soit $k$ un corps, $\car(k)\neq 2$. Soit $X$ une intersection de deux quadriques définie par deux $k$-formes diagonales  
\begin{equation*}
\begin{split}
    F=a_2x_2^2+a_3x_3^2+a_4x_4^2+a_5x_5^2+a_6x_6^2+a_7x_7^2, \\
    G=b_0x_0^2+b_1x_1^2+b_2x_2^2+b_3x_3^2+b_4x_4^2+b_5x_5^2+b_6x_6^2+b_7x_7^2
\end{split}
\end{equation*}
où tous $a_i,b_j$ sont non nuls. Ainsi, $\rk(F)=6$ et $\rk(G)=8$. La forme $\Psi$ s'écrit comme 
\begin{equation}\label{psiform}
\Psi=\langle b_0 t,\,b_1 t,\, a_2+b_2t, a_3+b_3t, a_4+b_4t, a_5+b_5t, a_6+b_6t, a_7+b_7t.\rangle\end{equation}
On a $\disc(\Psi)=t^2 P(t)$ où $P(t)=\prod_{i=2}^7(a_i+tb_i)$. 
Dans une voisinage du point $t=0$ la~normalisation de la courbe $C\colon y^2=\disc(\Psi)$ est de la forme $\tilde C\colon y^2=P(t)$. 
On déduit de \eqref{psiform} que $\res_0(\Psi)=\langle b_0,\,b_1 \rangle\in I^1W(k)$. Par le diagramme \eqref{resdiag} on trouve le résidu de la classe $\alpha$
$$\res_0(\alpha)=\disc \langle b_0,\,b_1 \rangle = -b_0b_1\in k^*/(k^*)^2.$$

On considère un point $x \in \tilde C$ au dessus de $t=0$. Soit $K=k\big(\sqrt{P(0)}\big)$ le~corps de~résidu~en~$x$. Si on a $\alpha_C\in Br(\tilde C)$, alors le résidu $\res_x(\alpha_C)$ est nul et $-b_0b_1\in (K^*)^2$. Donc, $-b_0b_1$ est un carre dans $k$ ou $-b_0b_1P(0)$ l'est. Il est facile de trouver un exemple des coefficients $a_i, b_j$ tels qu'aucune de ces deux condition n'est satisfaite. Alors $X_{k_v}$ ne contient pas de plan pour un nombre infini de places $v$ par la Proposition \ref{prop4}.
\end{example}

\begin{remark}
Soit $X\subset \bbP^7$ est une intersection de deux quadriques dont le pinceau contient une quadrique $Q$ de rang 6. Soit $L$ la droite des sommets de $Q$. Alors la projection depuis $L$ envoie la variété $F_2$ des plans contenus dans $X$ dans la variété des plans contenus dans une quadrique lisse $Q'\subset \bbP^5$. Cette dernière variété a deux composantes connexes \cite[Theorem 1.2]{R}. Cette observation suggère que $F_2$ n'est pas géométriquement intègre et donc peut ne pas avoir de point lisse $k_{v}$-rationnel pour une infinité de places $v$.
\end{remark}

{\small{\scshape Département de mathématiques et applications, École normale supérieure, \\45 rue d’Ulm, Paris, France 75005}}
\\
\textit{Email address:} \href{mailto:alexander.molyakov@ens.psl.eu}{\texttt{alexander.molyakov@ens.psl.eu}}

\end{document}